\newcommand{\N}{{\mathbb{N}}}
\newtheorem{theorem}{Theorem}%[section]
\newtheorem{Lemma}[theorem]{Lemma}
\newtheorem{proposition}[theorem]{Proposition}
\newtheorem{corollary}[theorem]{Corollary}
\newtheorem{fact}[theorem]{Fact}
\newcommand{\dist}{{\rm dist}}
\newcommand{\w}{\color{black}}%{\color{blue}}
\begin{document}

\title {
On 3-colourability of $(bull,H)$-free graphs}

\author[1]{Nadzieja Hodur}
\author[1]{Monika Pil\'sniak}
\author[1]{Magdalena Prorok}
\author[2]{Ingo Schiermeyer}
\affil[1]{\normalsize AGH University of Krakow, al. Mickiewicza 30, 30-059 Krak\'ow, Poland}
\affil[2]{\normalsize TU Bergakademie Freiberg, 09596 Freiberg, Germany}

\date{\today}

\maketitle
\begin{abstract}
    The $3$-colourability problem is a well-known NP-complete problem and it remains NP-complete for $bull$-free graphs, where $bull$ is the graph consisting of $K_3$ with two pendant edges attached to two of its vertices. In this paper we study $3$-colourability of $(bull,H)$-free graphs for several graphs $H$. We show that these graphs are $3$-colourable or contain an induced odd wheel $W_{2p+1}$ for some $p\geq 2$ or a spindle graph $M_{3p+1}$ for some $p\geq 1$. Moreover, for all our results we can provide certifying algorithms that run in polynomial time.
\end{abstract}

\noindent
{\w Keywords: $3$-colourable graphs, forbidden induced subgraphs, perfect graphs, complexity\\               % Keywords
Math. Subj. Class.: { 05C15,  05C17, 68Q25, 68W40.} }                      %  codes

\section{Introduction}\label{sec:intro}

We consider finite, simple, and  undirected graphs.
For terminology and notations not defined here, we refer to~\cite{BM08}.

An {\it induced subgraph} of a graph $G$
is a graph on a vertex set $S \subseteq V(G)$
for which two vertices are adjacent
if and only if they are adjacent in $G$.
In particular, we say that the subgraph is \emph{induced by $S$}.
We also say that a graph $H$ is an \emph{induced subgraph} of $G$ if $H$~is
isomorphic to an induced subgraph of $G$.

Given a family $\cal{H}$ of graphs and a graph $G$, we say that $G$ is \emph{$\cal{H}$-free}
if $G$ contains no graph from $\cal{H}$ as an induced subgraph.
In this context, the graphs of $\cal{H}$ are referred to as
\emph{forbidden induced subgraphs}.

A graph is {\it $k$-colourable} if each of its vertices can be coloured with one of $k$ colours
so that adjacent vertices obtain distinct colours.
The smallest integer $k$ such that a given graph $G$ is $k$-colourable
is called the {\it chromatic number} of $G$, denoted by $\chi(G).$
Clearly, $\chi(G) \geq \omega(G)$ for every graph $G$,
where $\omega(G)$ denotes the \emph{clique number} of $G$,
that is, the order of a maximum complete subgraph of $G$.
%For brevity, we shall say that a graph $G$ is \emph{$\omega$-colourable} if $\chi(G) = \omega(G)$.
%
Furthermore, a graph $G$ is {\it perfect} if $\chi(G')=\omega(G')$
for every induced subgraph $G'$ of $G.$
For a subgraph $H$ and a vertex $v$, let $d_H(v) = |N(v) \cap V(H)|$.

The graph on five vertices $v_1$, $v_2$, $v_3$, $v_4$, $v_5$ and with the edges $v_1v_2$, $v_2v_3$, $v_3v_4$, $v_4v_5$, $v_2v_4$ is called the \emph{bull}. 
Let $S_{i,j,k}$ be the graph consisting of three induced paths of lengths $i$, $j$ and $k$, with a common initial vertex.
The graph $S_{1,1,1}$ is called \emph{claw}, $S_{1,1,2}$ is called \emph{chair} and $S_{1,2,2}$ is called $E$.

The $3$-colourability problem is a well-known NP-complete problem and it remains NP-complete for $claw$-free and $bull$-free graphs. In the last two decades, a large number of results of colourings of graphs with forbidden subgraphs have been shown (cf.~\cite{BDHKRSV22}, \cite{alpha3}, \cite{clawdiamondnet}, \cite{Ran}, \cite{RS04-1}, \cite{RST02}, \cite{Sum} and cf.~\cite{GJPS17}, \cite{RS04}, \cite{RS19} for three surveys).

Following \cite{BHS}, an algorithm is certifying, if it returns with each input a simple and easily verifiable certificate that the particular input is correct. For example, a certifying algorithm for the bipartite graph recognition would return either a $2$-colouring of the input graph proving that it is bipartite, or an odd cycle proving that it is not bipartite. In this paper we study $3$-colourability of $(bull, H)$-free graphs for several graphs $H$. For all of our results we will provide certifying algorithms that run in polynomial time. 

Our research has been motivated by~\cite{clawdiamondnet} and we use some definitions and notations from it. A graph $G$ of order $3p+1$, $p\geq1$ is called a \emph{spindle graph} $M_{3p+1}$ if it contains a~cycle $C$: $u_0 u_1 \ldots  u_{3p} u_0$, where $\{u_{3i-2}, u_{3i-1}, u_{3i+1}, u_{3i+2}\}=N_G(u_{3i})$ and $\{u_{3i-3}, u_{3i}\}=N_G(u_{3i-1}) \cap N_G(u_{3i-2})$ for each $i \in [p]$, where $[p]:=\{1, 2, \ldots, p\}$.

Observe that $M_4 \cong K_4$ and $M_7$ is known as the Moser spindle.

\begin{figure}[H]
    \centering
    \includegraphics[width=0.4\linewidth]{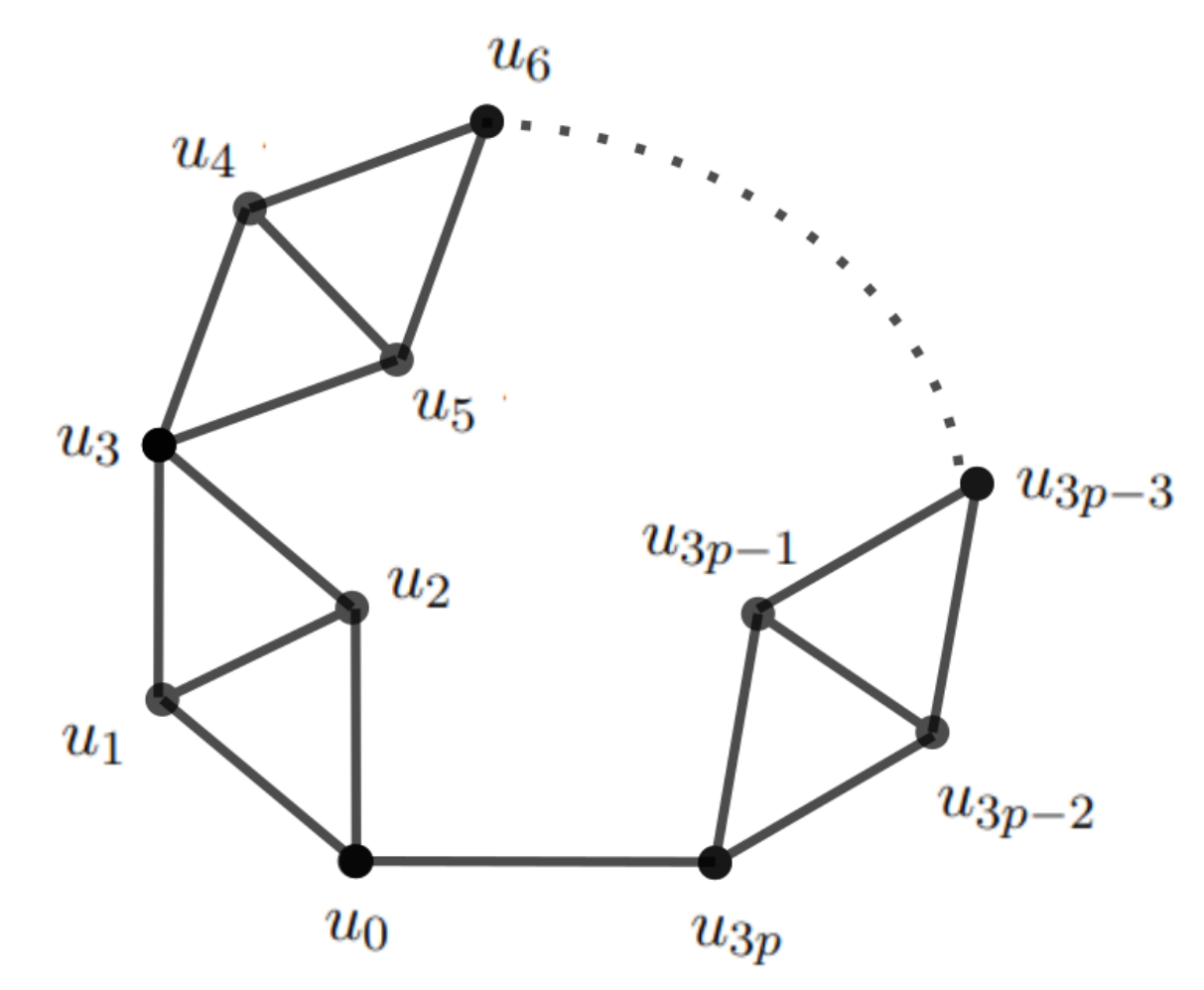}
    \label{spindler}
    \caption{The spindle graph $M_{3p+1}$.}
\end{figure}

\begin{proposition}[\rm \cite{clawdiamondnet}]
    The graph $M_{3p+1}$ is not 3-colourable for every $p \geq 1$. 
\end{proposition}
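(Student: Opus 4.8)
The plan is to assume that a proper $3$-colouring $c$ of $M_{3p+1}$ exists and to track the colours assigned to the ``tip'' vertices $u_0, u_3, u_6, \ldots, u_{3p}$ around the cycle, deriving a contradiction from the closing edge $u_{3p}u_0$.

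First I would pin down the local structure imposed by the definition. Fix $i \in [p]$ and look at the four vertices $u_{3i-3}, u_{3i-2}, u_{3i-1}, u_{3i}$. The edges $u_{3i-3}u_{3i-2}$, $u_{3i-2}u_{3i-1}$ and $u_{3i-1}u_{3i}$ lie on $C$; the condition $\{u_{3i-3},u_{3i}\} = N_G(u_{3i-1})\cap N_G(u_{3i-2})$ supplies the edges $u_{3i-3}u_{3i-1}$ and $u_{3i-2}u_{3i}$; and the condition $N_G(u_{3i}) = \{u_{3i-2},u_{3i-1},u_{3i+1},u_{3i+2}\}$ shows that $u_{3i-3}$ is \emph{not} adjacent to $u_{3i}$ (for $p\ge 2$, since then $u_{3i-3}\notin N_G(u_{3i})$ modulo $3p+1$). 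Hence these four vertices induce a diamond, i.e.\ $K_4$ minus the edge $u_{3i-3}u_{3i}$: two triangles sharing the edge $u_{3i-2}u_{3i-1}$, with $u_{3i-3}$ and $u_{3i}$ the two non-adjacent vertices.

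The key observation is then purely local: in any proper $3$-colouring of a diamond the two non-adjacent (degree-$2$) vertices must receive the same colour. Indeed, the two central vertices $u_{3i-2}, u_{3i-1}$ are adjacent, so they use two distinct colours; each of $u_{3i-3}$ and $u_{3i}$ is adjacent to both of them and is therefore forced onto the one remaining colour. Applying this to every block $i\in[p]$ yields $c(u_{3i-3}) = c(u_{3i})$, and chaining these equalities gives $c(u_0) = c(u_3) = \cdots = c(u_{3p})$. But $u_{3p}u_0$ is the closing edge of $C$, so $u_0$ and $u_{3p}$ are adjacent and must differ in colour --- a contradiction. The case $p=1$ is immediate, since then $M_4\cong K_4$ is not $3$-colourable (equivalently, the two ``tips'' become adjacent and the diamond collapses to $K_4$).

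I expect the only real work to be the bookkeeping in the second step: reading the exact edge set off the terse neighbourhood conditions, in particular confirming the non-adjacency of the two tips for $p\ge2$ and handling the indices modulo $3p+1$, which is precisely what makes block $p$ close the cycle via the edges $u_{3p}u_0$ and $u_{3p}u_1$. Once the diamond structure is established, the colouring argument is routine.
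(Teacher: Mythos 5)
Your proof is correct. Note that the paper itself does not prove this proposition---it is quoted from \cite{clawdiamondnet} without proof---so there is no in-paper argument to compare against. Your diamond-chain argument is the natural self-contained one: each block $\{u_{3i-3},u_{3i-2},u_{3i-1},u_{3i}\}$ induces a diamond whose two degree-two vertices are forced to share a colour, the equalities $c(u_0)=c(u_3)=\cdots=c(u_{3p})$ chain around the cycle, and the edge $u_{3p}u_0$ gives the contradiction; the index checks (non-adjacency of $u_{3i-3}$ and $u_{3i}$ for $p\ge 2$, collapse to $K_4$ for $p=1$) are all verified correctly against the definition.
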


Since the $3$-colourability problem is NP-complete for claw-free graphs and $K_3$-free graphs (cf.~\cite{GJPS17}), it is also NP-complete for $bull$-free graphs.
The following theorem in~\cite{clawdiamondnet} has motivated our research.

\begin{theorem}[\rm \cite{clawdiamondnet}]
    Let $G$ be $(bull, claw)$-free graph. Then one of the following holds
    \begin{enumerate}[label=(\roman*)]
        \item $G$ contains $W_5$ or
        \item $G$ contains a (not necessarily induced) spindle graph $M_{3i+1}$ for some $i\geq1$ or 
        \item $G$ is 3-colourable.
    \end{enumerate}
\end{theorem}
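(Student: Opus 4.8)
The plan is to argue by a chain of reductions on the clique number and the maximum degree, peeling off at each stage the cases in which an obstruction $W_5$ or a spindle appears, until only an explicitly $3$-colourable family remains. Throughout we may assume $G$ is connected, since all three outcomes are preserved under passing to components and a $3$-colouring is assembled componentwise. First I would dispose of the extreme values of $\omega(G)$. If $\omega(G)\le 2$ then $G$ is triangle-free, and together with claw-freeness this forces $\Delta(G)\le 2$ (three pairwise non-adjacent neighbours of a vertex would form a claw), so $G$ is a path or a cycle and hence $3$-colourable, giving outcome~(iii). If $\omega(G)\ge 4$ then $G$ contains $K_4=M_4=M_{3\cdot 1+1}$, giving outcome~(ii). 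This leaves the main regime $\omega(G)=3$.

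Next I would bound the degree. When $\omega(G)=3$, for every vertex $v$ the neighbourhood $N(v)$ induces a triangle-free graph (a triangle in $N(v)$ would yield a $K_4$) whose independence number is at most $2$ (an independent triple in $N(v)$ together with $v$ is a claw). By the Ramsey bound $R(3,3)=6$ this gives $|N(v)|\le 5$, hence $\Delta(G)\le 5$. Moreover the unique triangle-free graph on five vertices with independence number $2$ is $C_5$, so any vertex of degree $5$ produces $G[\{v\}\cup N(v)]=W_5$ and outcome~(i). Thus we may assume $\omega(G)=3$ and $\Delta(G)\le 4$.

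The heart of the argument is this remaining case, where the building blocks are diamonds. Since $\omega(G)=3$, any two triangles on a common edge induce a $K_4-e$ (their two apexes cannot be adjacent), no edge lies in three triangles (three apexes would be an independent triple in a common neighbourhood, i.e.\ a claw), and a vertex in no triangle has an independent neighbourhood and hence degree at most $2$. In any $3$-colouring the two tips of a diamond must receive a common colour, and I would propagate this forcing across $G$. The role of bull-freeness is to control how the blocks attach to one another and to the triangle-free (path/cycle) part: a triangle carrying two independent pendant attachments at two distinct vertices is exactly a bull, so bull-freeness rules out branching and forces the diamonds and triangles to line up into chains. A chain that does not close up, or that closes up consistently, admits an explicit $3$-colouring built from the tip-forcing, while the only way the forcing becomes inconsistent around a closed chain is a cyclic arrangement whose adjacencies are precisely those of a spindle $M_{3p+1}$, which we then exhibit as a (not necessarily induced) subgraph, giving outcome~(ii).

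The main obstacle is exactly this last structural step: showing that under claw- and bull-freeness with $\omega(G)=3$ and $\Delta(G)\le 4$ the triangle structure of $G$ really is a union of diamond/triangle chains with no forbidden attachments, and that an inconsistent closure can only be a spindle and not some other non-$3$-colourable gadget. I would carry this out by a bounded local case analysis of the admissible neighbourhoods ($N(v)\in\{2K_2,P_4,C_4\}$ for a degree-$4$ vertex and $N(v)\in\{P_3,P_2\cup K_1\}$ for a degree-$3$ vertex) and of the interfaces between blocks, where each case either extends the current chain, produces a bull (a contradiction), or closes a spindle. Since every step is a constant-size local check and the colouring is produced constructively, the whole procedure is algorithmic and yields the promised certifying polynomial-time algorithm, returning either a $3$-colouring, an induced $W_5$, or a spindle subgraph.
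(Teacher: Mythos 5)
First, note that the paper does not prove this statement at all: it is quoted from \cite{clawdiamondnet} as background, so there is no in-paper proof to match yours against. Judged on its own terms, your proposal has a correct front end and then stops exactly where the real work begins. The reductions for $\omega(G)\le 2$ and $\omega(G)\ge 4$, the Ramsey bound $|N(v)|\le R(3,3)-1=5$, the identification of $C_5$ as the unique triangle-free graph on five vertices with independence number $2$ (hence $W_5$ at any degree-$5$ vertex), and the local observations about diamonds (tips forced to share a colour, no edge in three triangles when $\omega=3$, vertices in no triangle having degree at most $2$) are all sound.

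The gap is the entire final step. The claims that bull-freeness forces the triangles and diamonds to ``line up into chains,'' that a non-closed or consistently closed chain is $3$-colourable, and above all that the \emph{only} inconsistent closure is a spindle $M_{3p+1}$, are precisely the content of the theorem; you assert them and describe how you ``would'' verify them by a bounded local case analysis, but you do not carry that analysis out. Two concrete pieces are missing. First, the interface between the triangle-carrying part and the triangle-free part: a $(bull,claw)$-free graph with $\omega=3$ can contain a long induced odd hole $C_{2k+1}$ with triangles attached around it, and it is around such odd holes that the parity obstruction (the spindle) actually arises; your chain picture never engages with odd holes, whereas the machinery this paper builds for the sibling Theorems~\ref{thm:bullchair} and~\ref{thm:bullE} rests precisely on the Strong Perfect Graph Theorem together with a classification of vertices by their neighbourhoods on a smallest odd hole (Facts~\ref{values_q}--\ref{q1} and the subsequent structure lemmas). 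Second, the assertion that an inconsistent closure can only be a spindle ``and not some other non-$3$-colourable gadget'' is the statement to be proved, not a step in its proof. As it stands the proposal is a plausible plan with a correct opening, but the decisive structural argument is absent.
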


The goal of this paper is to consider 3-colourability of $(bull, H)$-free graphs, where $H$ is a supergraph of the claw.

\begin{theorem}\label{thm:bullchair} 
Let $G$ be a connected $(bull, chair)$-free graph. Then 
\begin{enumerate}[label=(\roman*)]
    \item $G$ contains an odd wheel or
    \item $G$ contains a (not necessarily induced) spindle graph $M_{3i+1}$ for some $i\geq 1$ or
    \item $G$ is 3-colourable.
\end{enumerate}
\end{theorem}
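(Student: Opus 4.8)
The plan is to use the clique number of $G$ as the primary case distinction and then to analyse $G$ in the neighbourhood of a shortest odd hole. First, if $\omega(G)\ge 4$ then $G$ contains a $K_4$, and since $M_4\cong K_4$ this is already a spindle graph, so alternative (ii) holds; hence I may assume $\omega(G)\le 3$ throughout. Recall that an \emph{odd hole} is an induced cycle of odd length at least $5$ and an \emph{odd antihole} is the complement of one. If $G$ contains neither an odd hole nor an odd antihole, then by the Strong Perfect Graph Theorem $G$ is perfect, so $\chi(G)=\omega(G)\le 3$ and alternative (iii) holds. It therefore remains to produce alternative (i) or (ii), or still a $3$-colouring, whenever $G$ contains an odd hole or an odd antihole.

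For antiholes the bound $\omega(G)\le 3$ is decisive: the antihole $\overline{C_{2k+1}}$ has clique number $k$, so the only odd antihole that can occur as an induced subgraph is $\overline{C_7}$ (as $\overline{C_5}\cong C_5$ is already an odd hole). One checks that $\overline{C_7}$ contains the spindle $M_7$ as a (not necessarily induced) subgraph: reading the vertices of $\overline{C_7}$ in the Hamiltonian order obtained by steps of $2$ exhibits both the required $7$-cycle and the five chords prescribed by the definition of $M_{3p+1}$. Thus the presence of an induced $\overline{C_7}$ again yields alternative (ii).

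The heart of the argument is the case where $G$ contains an odd hole; I would take a \emph{shortest} one, $C=c_1c_2\cdots c_{2k+1}$ with $k\ge 2$, and classify the possible neighbourhoods $N(v)\cap V(C)$ of a vertex $v\notin V(C)$. Two local obstructions drive this classification. If $v$ has two consecutive neighbours $c_i,c_{i+1}$ on $C$ with $c_{i-1},c_{i+2}\notin N(v)$, then $\{c_{i-1},c_i,v,c_{i+1},c_{i+2}\}$ induces a $bull$; and if $v$ has an isolated neighbour $c_i$ (with $c_{i-1},c_{i+1}\notin N(v)$) not supported by a neighbour two steps away, then $\{c_{i-2},c_{i-1},c_i,c_{i+1},v\}$ induces a $chair$ ($S_{1,1,2}$). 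Since $G$ is $(bull, chair)$-free, every maximal run of consecutive neighbours of $v$ on $C$ has length $1$ or at least $3$, the length-$1$ runs being forced to have neighbours two steps away. A vertex complete to $C$ produces, together with $C$, the odd wheel $W_{2k+1}$, which is alternative (i); and two adjacent vertices both complete to $C$ give a $K_4=M_4$, which is alternative (ii). Handling longer shortest holes here is exactly what upgrades the conclusion from $W_5$ in the $(bull, claw)$-free case to a general odd wheel $W_{2p+1}$.

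The main obstacle, and the bulk of the work, is the remaining situation in which no vertex is complete to $C$: here I must show that the admissible attachment patterns, together with connectivity and the $(bull, chair)$-free hypothesis, either force the specific chorded structure of a spindle $M_{3i+1}$ (read off from two triangles meeting the hole in the pattern prescribed by the definition) or else are simple enough that a fixed $3$-colouring of $C$ extends to all of $G$. I expect the delicate points to be bounding how several partial vertices can interact without creating a forbidden induced subgraph, and verifying that whenever the colouring extension fails a spindle must appear. Finally, each step is constructive — detecting a $K_4$, a complete vertex, the $\overline{C_7}$/spindle pattern, or producing the explicit colouring — which is what yields the promised certifying polynomial-time algorithm.
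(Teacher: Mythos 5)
Your setup — dispose of $\omega(G)\ge 4$ via $K_4\cong M_4$, invoke the Strong Perfect Graph Theorem, reduce odd antiholes to $\overline{C_7}\supseteq M_7$, then classify attachments to a shortest odd hole using the $bull$ to forbid runs of exactly two consecutive neighbours and the $chair$ to restrict isolated neighbours — is exactly the paper's framework. But the proposal stops precisely where the proof begins. Your final paragraph announces that you ``must show'' the admissible attachment patterns force a spindle or admit an extension of a $3$-colouring of $C$, and lists this as the expected delicate point; no argument is given. That remaining step is the entire content of Section 3 of the paper, so as written this is a gap, not a proof.

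Two concrete things you are missing. First, your classification is weaker than what $(bull,chair)$-freeness actually gives: you allow length-$1$ runs ``supported by a neighbour two steps away'' (i.e.\ $N_Q(v)=\{c_i,c_{i+2}\}$), but these are also impossible — if $N_Q(v)=\{c_i,c_{i+2}\}$ then $\{c_{i+1},c_{i+2},c_{i+3},c_{i+4},v\}$ induces a $chair$ (walking away from the other neighbour). Hence every vertex off the hole has exactly $3$ (or, for $p=5$, possibly $4$) consecutive neighbours on it, and moreover each such ``triple'' position is occupied by at most one vertex, consecutive positions cannot both be occupied, and distinct such vertices are pairwise non-adjacent. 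Without eliminating the $\{c_i,c_{i+2}\}$-type vertices your colouring-extension step would have to handle a much richer structure (this is what the paper's $(bull,E)$-free proof has to do, at considerable length). Second, the endgame needs the standard reductions you did not set up: assuming $\delta(G)\ge 3$ and $2$-connectivity, the case $p=5$ forces every hole vertex to have a neighbour among the triple/quadruple vertices and hence forces $M_7$ outright, while for $p>5$ one counts that at most $(p-1)/2$ triple positions can be occupied; equality yields the spindle $M_{3(p-1)/2+1}$, and otherwise a gap between occupied positions lets one $3$-colour $Q$ so that $c_i$ and $c_{i+2}$ agree at every occupied position, after which the triple vertices receive the leftover colour. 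You would need to supply all of this to complete the argument.
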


In fact Theorem~\ref{thm:bullchair} can be extended to the larger class of $(bull,E)$-free graphs. However, for this proof, we will show and make use of several additional graph properties. 

\begin{theorem}\label{thm:bullE} 
Let $G$ be a connected $(bull, E)$-free graph. Then 
\begin{enumerate}[label=(\roman*)]
    \item $G$ contains an odd wheel or
    \item $G$ contains a (not necessarily induced) spindle graph $M_{3i+1}$ for some $i\geq 1$ or
    \item $G$ is 3-colourable.
\end{enumerate}
\end{theorem}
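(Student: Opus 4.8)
The plan is to argue by contradiction: assume $G$ is a connected $(bull,E)$-free graph that contains no induced odd wheel and no (not necessarily induced) spindle $M_{3i+1}$, and suppose $G$ is not $3$-colourable; I then derive a contradiction. First I would reduce to the case $\omega(G)\le 3$: if $G$ contains $K_4$, then since $K_4\cong M_4$ this is already a spindle, contrary to assumption, so $G$ is $K_4$-free. Now a perfect graph with $\omega\le 3$ satisfies $\chi=\omega\le 3$, hence is $3$-colourable; so a non-$3$-colourable $K_4$-free graph cannot be perfect. By the Strong Perfect Graph Theorem, $G$ therefore contains an induced odd hole $C_{2k+1}$ with $k\ge 2$ or an induced odd antihole $\overline{C_{2k+1}}$ with $k\ge 3$.

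The antihole case I would dispose of directly. For $k\ge 4$ we have $\omega(\overline{C_{2k+1}})=\alpha(C_{2k+1})=k\ge 4$, so $\overline{C_{2k+1}}$ contains $K_4\cong M_4$, a contradiction. For $k=3$ one checks that $\overline{C_7}$ contains the Moser spindle $M_7$ as a (non-induced) subgraph: labelling $V(\overline{C_7})=\{0,\dots,6\}$ with $i\sim j$ exactly when their cyclic distance on $C_7$ is $2$ or $3$, the eleven edges of $M_7$ are realised by the two triangles $\{0,2,4\}$ and $\{0,3,5\}$ sharing the vertex $0$, the apexes $6$ (adjacent to $2,4$) and $1$ (adjacent to $3,5$), and the bridge $\{1,6\}$. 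This again contradicts the no-spindle assumption, so $G$ must in fact contain an induced odd hole.

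The heart of the proof is then the odd-hole case. I would fix a \emph{shortest} induced odd hole $C=c_0c_1\cdots c_{2k}c_0$; shortestness prevents any outside vertex from having neighbours on $C$ that would create a shorter odd hole, which sharply limits the admissible attachment patterns. The main work is to classify, for each $v\notin C$, the set $N(v)\cap V(C)$. Both forbidden subgraphs enter here: $bull$-freeness heavily restricts $v$ from seeing a triangle-plus-pendant pattern along $C$ and forces $N(v)\cap V(C)$ to be interval-like, while $E$-freeness (only slightly weaker than chair-freeness, since $chair=S_{1,1,2}$ is an induced subgraph of $E=S_{1,2,2}$) excludes the scattered attachments that chair-freeness would, up to a short list of extra configurations handled separately. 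I expect this classification to yield a trichotomy: either some vertex is adjacent to an entire odd subcycle of $C$, giving a hub and hence an induced odd wheel $W_{2p+1}$; or consecutive ``diagonal'' vertices chain the hole into a necklace of diamonds, which is precisely the spindle structure $M_{3i+1}$; or every external attachment is mild enough that a proper $3$-colouring of the odd cycle $C$ extends to all of $G$, contradicting non-$3$-colourability.

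The main obstacle is exactly this last classification, and in particular the transition from chair-freeness (Theorem~\ref{thm:bullchair}) to $E$-freeness. Since an $E$-free graph may still contain induced chairs, Theorem~\ref{thm:bullchair} cannot be invoked as a black box, and one must first establish the ``several additional graph properties'' promised in the text—lemmas that pin down how an induced chair embeds relative to the hole and how the length-$2$ arm of such a chair interacts with $bull$-freeness and $K_4$-freeness. Organising these extra configurations so that each one either extends the colouring or completes an odd wheel or a spindle, while keeping the case analysis finite and controlled, is where I expect the real difficulty to lie.
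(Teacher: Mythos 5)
Your reductions are sound and coincide with the paper's preliminaries: $K_4$-freeness via $M_4$, the Strong Perfect Graph Theorem, and the disposal of odd antiholes (your explicit embedding of the Moser spindle into $\overline{C_7}$ checks out). The overall plan for the odd-hole case --- fix a shortest induced odd hole $Q$, classify the attachments $N(v)\cap V(Q)$, and show each configuration yields an odd wheel, a spindle, or an extendable $3$-colouring --- is also exactly the paper's strategy. But from that point on your text is a statement of intent rather than a proof, and the part you defer is the entire mathematical content of the theorem. Concretely, what is missing is: (1) the proof that $Q$ dominates $G$ (Lemma~\ref{lem:Qdominat}), which is where $E$-freeness first enters and without which the classification of external vertices into the classes $A_i$ (one neighbour), $B_i$ (neighbours $v_i,v_{i+2}$), $C_i$ (three consecutive neighbours) and $D_i$ (four consecutive, only for $p=5$) does not even cover $V(G)$; (2) the structural facts governing which edges can run between these classes (Facts~\ref{mozkr}--\ref{mozkrs} and \ref{f:p5_bipartite}--\ref{f:D}), each requiring its own bull/$E$/$K_4$ case analysis; and (3) the colouring itself. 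For $p>5$ the paper reduces to colouring $Q$ subject to the constraint $c(v_i)=c(v_{i+2})$ whenever $C_i\cup B_i'\neq\emptyset$ and shows that a conflict while propagating this constraint around the odd cycle produces a spindle $M_{3i+1}$; this propagation argument is the mechanism by which spindles actually arise, and nothing in your sketch identifies it. For $p=5$ the classes $A$ and $D$ are genuinely present, and the paper needs roughly a dozen further facts plus an explicit six-step colouring algorithm with a three-case correctness proof in which the subgraphs $M_7$ and $M_{10}$ are located by hand.

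A smaller point: you describe $E$-freeness as excluding ``the scattered attachments that chair-freeness would, up to a short list of extra configurations.'' It is the other way around: $E$-freeness is the weaker hypothesis, so it excludes fewer attachment patterns; the sets $A$ and $B$ are empty in the chair-free case but not in the $E$-free case, and coping with them --- in particular the partition of $B_i$ into $B_i'$ and $B_i^*$ and the observation that every vertex of $B_i^*$ can be coloured last with $c(v_{i+1})$ --- is precisely the ``several additional graph properties'' the paper alludes to. As it stands the proposal identifies the right skeleton but proves none of the load-bearing steps.
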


If we forbid in addition induced 5-cycles, then Theorem~\ref{thm:bullE} can be extended as follows.

\begin{theorem}\label{thm:bullH} 
Let $G$ be a connected $(bull, C_5, H)$-free graph with $H \in \{S_{1,1,3}, S_{1,2,3}\}$. Then 
\begin{enumerate}[label=(\roman*)]
    \item $G$ contains an odd wheel or
    \item $G$ contains a (not necessarily induced) spindle graph $M_{3i+1}$ for some $i\geq 1$ or
    \item $G$ is 3-colourable.
\end{enumerate}
\end{theorem}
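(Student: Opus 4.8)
The plan is to prove the contrapositive: assuming $G$ contains neither an odd wheel nor a spindle $M_{3i+1}$, I would produce a proper $3$-colouring. Since $K_4\cong M_4$, the absence of a spindle immediately gives $\omega(G)\le 3$. Next I would reduce to Theorems~\ref{thm:bullchair} and~\ref{thm:bullE}. The chair $S_{1,1,2}$ is an induced subgraph of $S_{1,1,3}$ and $E=S_{1,2,2}$ is an induced subgraph of $S_{1,2,3}$ (in each case delete the endpoint of the longest leg). Hence if $G$ is $S_{1,1,3}$-free and contains no induced chair, it is $(bull,S_{1,1,2})$-free and Theorem~\ref{thm:bullchair} finishes; if $G$ is $S_{1,2,3}$-free and contains no induced $E$, it is $(bull,S_{1,2,2})$-free and Theorem~\ref{thm:bullE} finishes. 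So I may assume $G$ contains an induced chair (resp.\ $E$); fix one, call it $F$, with a leg $c-m-w$ of length $2$. As $G$ is $H$-free, this leg cannot be prolonged, i.e.\ no vertex is adjacent to $w$ and to nothing else of $F$; equivalently every neighbour of $w$ outside $F$ has a further neighbour in $F$.

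Before the main case analysis I would dispose of the odd antiholes, which is where the hypotheses cooperate cleanly. By the Strong Perfect Graph Theorem $G$ is perfect unless it contains an odd hole or an odd antihole; odd holes of length $5$ are excluded by $C_5$-freeness. For an odd antihole $\overline{C_{2k+1}}$ one has $\omega=k$, so for $k\ge 4$ it already contains $K_4\cong M_4$, a spindle; the only surviving antihole is $\overline{C_7}$, which is genuinely $(bull,C_5,H)$-free and not $3$-colourable, and which one checks contains $M_7$ as a (not necessarily induced) subgraph. Thus under the no-wheel, no-spindle assumption no odd antihole can occur, and the only possible imperfection comes from odd holes of length at least $7$.

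The heart of the proof is then a local analysis of the attachments to $F$ and to any long odd hole, run under three simultaneous constraints: $bull$-freeness forbids a triangle with two non-adjacent pendants, $C_5$-freeness forbids induced $5$-cycles — the genuinely new ingredient, needed exactly because $S_{1,1,3}$ and $S_{1,2,3}$ are strictly larger than the chair and $E$ — and the non-prolongability of the length-$2$ leg. Classifying outside vertices by their trace on $F$ (and on a shortest odd hole, if present), I would show that $C_5$-freeness together with $bull$-freeness forces these neighbourhoods to split into a bounded number of cliques with tightly controlled interaction; at each branch one either reads off an odd wheel $W_{2p+1}$ or a spindle $M_{3i+1}$ — contradicting our assumption — or else obtains a structure regular enough to $3$-colour explicitly, extending a colouring outward from $F$. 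When no odd hole survives either, $G$ is perfect with $\omega(G)\le 3$ and $\chi(G)=\omega(G)\le 3$.

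I expect the main obstacle to be exactly this attachment analysis: enumerating how external vertices can meet the chair or $E$ and how they can meet a long odd hole, and checking in each configuration that $C_5$- and $bull$-freeness leave only a certifiable obstruction or a clique-structured, hence colourable, neighbourhood. Keeping every branch constructive — so that the promised polynomial-time certifying algorithm (a $3$-colouring, an odd wheel, or a spindle) falls out directly — is the additional bookkeeping that must be carried throughout.
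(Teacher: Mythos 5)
There is a genuine gap: the entire structural core of your argument is announced but never carried out. The paragraph beginning ``The heart of the proof is then a local analysis of the attachments to $F$ and to any long odd hole'' is a plan, not a proof --- you never enumerate the possible traces of an external vertex on the chair/$E$ or on the odd hole, never exhibit the claimed clique decomposition of the neighbourhoods, and never construct the explicit $3$-colouring. Since this is precisely where all the mathematical content of the theorem lives, the proposal as written does not establish the statement. The surrounding framing is fine (the contrapositive set-up, $\omega(G)\le 3$ from the absence of $M_4$, the disposal of odd antiholes via the Strong Perfect Graph Theorem, and the observation that a vertex adjacent only to the tip of the length-$2$ leg would prolong the chair to $S_{1,1,3}$ or $E$ to $S_{1,2,3}$), but these are exactly the easy parts.

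You should also note that your intended analysis is anchored on the wrong object. The paper never analyses attachments to an induced chair or $E$; it anchors everything on a \emph{shortest odd hole} $Q=v_1\cdots v_p$, and the proof of Theorem~\ref{thm:bullH} is then a two-line reduction to the $p>5$ case of Theorem~\ref{thm:bullE}: $C_5$-freeness forces $p>5$; a vertex $w$ with $N_Q(w)=\{v_i\}$ would create an induced $S_{1,2,3}$ (hence also an $S_{1,1,3}$) centred at $v_i$ with legs $w$, $v_{i-1}v_{i-2}$ and $v_{i+1}v_{i+2}v_{i+3}$, so the set $A$ is empty; together with Fact~\ref{values_q} this gives $V(G)=Q\cup B\cup C$, after which the $p>5$ argument of Section~\ref{sec:proof_bull_E} (Facts~\ref{mozkr}--\ref{mozkrs}, the reduction to colouring $Q$ subject to property~(\ref{prop}), and the colouring algorithm) applies verbatim. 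If you want to salvage your approach, the efficient move is not to redo an attachment analysis around $F$ from scratch but to verify that $S_{1,1,3}$- or $S_{1,2,3}$-freeness eliminates the $A$-vertices relative to $Q$ and then to import the existing machinery; as it stands, your proposal would have to reprove all of that machinery inside the unexecuted ``heart'' paragraph.
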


The 3-colourability problem has been also studied for $P_k$-free graphs for $k\geq 5$.
Let $G_1$, $G_2$, $G_3$ be graphs on $7$, $10$ and $13$ vertices, respectively (see Figure~\ref{G123}).  In \cite{BHS} the following theorem was shown. 

\begin{figure}[htb]
    \begin{center}
    \begin{subfigure}{0.23\textwidth}
\includegraphics[width=0.9\linewidth]{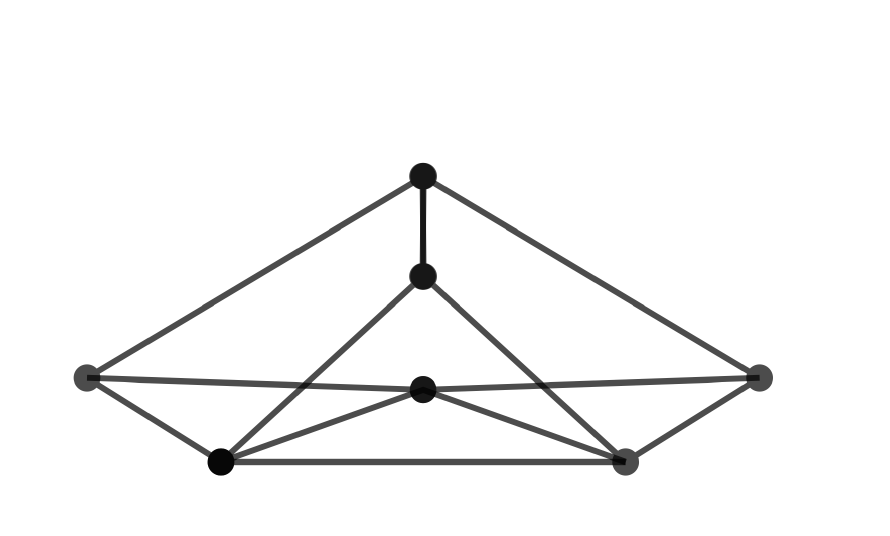} 
    \caption*{$G_1$}
    \end{subfigure}
    \begin{subfigure}{0.33\textwidth}\includegraphics[width=0.9\linewidth]{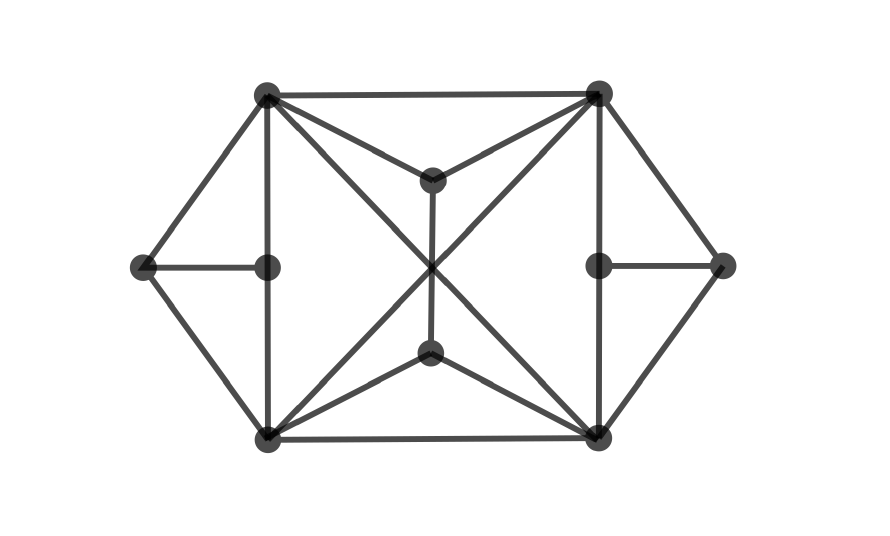}
    \caption*{$G_2$}
    \end{subfigure}
    \begin{subfigure}{0.33\textwidth}\includegraphics[width=0.9\linewidth, height=.5\linewidth]{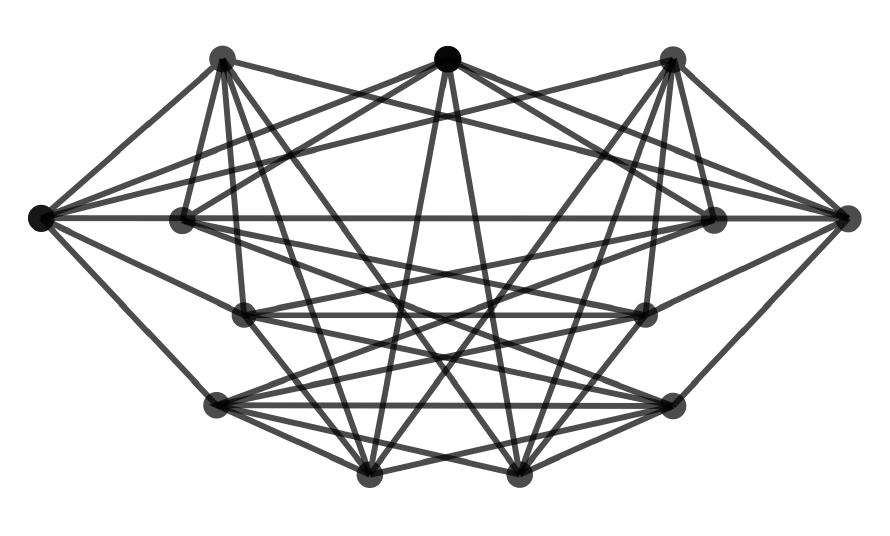}
    \caption*{$G_3$}
    \end{subfigure}
    \caption{The graphs $G_1$, $G_2$ and $G_3$ from Theorem \ref{BHS}.}
    \label{G123}
    \end{center}
\end{figure}

\begin{theorem}[\rm \cite{BHS}]\label{BHS}
    A $P_5$-free graph is $3$-colourable if and only if it does not contain $K_4$, $W_5$, $M_7$, $G_1$, $G_2$ or $G_3$ as a subgraph. 
\end{theorem}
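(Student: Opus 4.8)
The statement is an ``if and only if,'' so I would prove the two directions separately, and I expect almost all the work to lie in the ``if'' direction. The \emph{only if} direction is best handled as a contrapositive: if a graph contains any of $K_4$, $W_5$, $M_7$, $G_1$, $G_2$, $G_3$ as a subgraph, then it is not $3$-colourable. Since the chromatic number is monotone under taking subgraphs ($\chi(G)\ge\chi(H)$ whenever $H$ is a subgraph of $G$), it suffices to verify that each of the six graphs has chromatic number at least $4$. For $K_4$ this is immediate; for $W_5$ the rim is a $C_5$, which forces three colours, and the hub is adjacent to all five rim vertices and so needs a fourth; for the Moser spindle $M_7$ and for $G_1,G_2,G_3$ one checks $4$-criticality directly, each being a small $4$-chromatic graph. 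Note that this direction never uses $P_5$-freeness.

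For the \emph{if} direction I would again argue the contrapositive: a $P_5$-free graph $G$ that is not $3$-colourable contains one of the six graphs as a subgraph. Passing to a $4$-vertex-critical induced subgraph $G'$ (still $P_5$-free, as this property is hereditary) I may assume $\chi(G')=4$ while every proper induced subgraph of $G'$ is $3$-colourable. If $\omega(G')\ge 4$ then $G'$ contains $K_4$ and we are done, so assume $\omega(G')\le 3$. Then $\chi(G')>\omega(G')$, so $G'$ is imperfect, and by the Strong Perfect Graph Theorem $G'$ contains an induced odd hole or an induced odd antihole. Here $P_5$-freeness combined with $\omega\le 3$ is decisive: every hole of length at least $6$ contains an induced $P_5$ (take five consecutive vertices), and the antihole $\overline{C_{2k+1}}$ has clique number $k$, so once $\omega\le 3$ the only admissible odd hole is $C_5$ and the only admissible odd antihole is $\overline{C_7}$. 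Hence $G'$ contains an induced $C_5$ or an induced $\overline{C_7}$.

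The remaining task is a structural analysis of how the other vertices attach to this fixed induced $C_5$ (respectively $\overline{C_7}$). Each outside vertex $v$ sees some subset of the rim, and $P_5$-freeness sharply restricts which subsets occur: a vertex adjacent to a ``short arc'' of the cycle and missing the rest typically completes an induced $P_5$ with a rim path, and is therefore forbidden. Classifying outside vertices by attachment type, using $\omega\le 3$ to forbid triangles in the forbidden configurations, and using $4$-criticality to keep everything small, one shows that the closed neighbourhood of the $C_5$ already forces a wheel $W_5$ or, through iterated attachments along the cycle, a spindle pattern; the graphs $G_1,G_2,G_3$ are exactly the minimal $4$-critical configurations that arise from the finitely many admissible combinations. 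The $\overline{C_7}$ case is shorter, since $\overline{C_7}$ is itself $4$-chromatic on seven vertices and one checks that it already contains (a copy of) one of the listed graphs. I would close by recording the certifying content: a perfect $P_5$-free graph can be optimally coloured in polynomial time, giving a $3$-colouring as the positive certificate, while the case analysis is constructive and locates an explicit copy of one of the six obstructions as the negative certificate.

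The main obstacle is the \emph{completeness} of the list: proving that $W_5$, $M_7$, $G_1$, $G_2$, $G_3$ are the only $4$-critical $P_5$-free graphs with $\omega\le 3$. This demands a careful, finite but intricate enumeration of attachment types around the induced $C_5$ (and $\overline{C_7}$), repeatedly invoking $P_5$-freeness to bound how vertices can be appended and using $4$-criticality (each vertex deletion must restore $3$-colourability) to cap the order of the configurations. Establishing such a bound so that the enumeration terminates, and checking that every admissible pattern collapses to one of the three sporadic graphs $G_1,G_2,G_3$, is the delicate part of the argument.
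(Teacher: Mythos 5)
First, note that the paper itself contains no proof of this statement: Theorem~\ref{BHS} is quoted from~\cite{BHS}, so there is no in-paper argument to measure your proposal against. Your ``only if'' direction is fine, and your reduction of the ``if'' direction --- pass to a $4$-vertex-critical induced subgraph, dispose of $\omega\ge 4$ via $K_4$, then invoke the Strong Perfect Graph Theorem and use $P_5$-freeness plus $\omega\le 3$ to leave only an induced $C_5$ or an induced $\overline{C_7}$ --- is sound, and consistent with the paper's own preliminary remark that $\overline{C_7}$ contains $M_7$ while longer odd antiholes contain $K_4$.

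The genuine gap is that the entire content of the theorem, namely the \emph{completeness} of the list (that $W_5$, $M_7$, $G_1$, $G_2$, $G_3$ are the only obstructions that can arise around the induced $C_5$), is asserted rather than proved. You write that ``one shows'' the attachment types force a wheel, a spindle pattern, or one of $G_1,G_2,G_3$, and you yourself flag this as the delicate part, but you supply no mechanism that makes the enumeration terminate and no argument for why exactly these three sporadic graphs on $7$, $10$ and $13$ vertices emerge; $4$-criticality does not by itself bound the order of a $4$-critical $P_5$-free graph, so ``using $4$-criticality to keep everything small'' needs a real lemma behind it. For what it is worth, the cited source does not argue this way: it proves the theorem via a certifying algorithm built on the fact that a connected $P_5$-free graph has a dominating clique or dominating $P_3$, propagating forced colours and extracting one of the six obstructions whenever propagation fails. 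So your text is a reasonable alternative blueprint, but as written it is an outline with the decisive step missing, not a proof.
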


Note that $G_1$, $G_2$ and $G_3$ are not $bull$-free. This leads to the following corollary from Theorem \ref{thm:bullH}. 

\begin{corollary}
    Let $G$ be a $(bull, P_5)$-free graph. Then
    \begin{enumerate}[label=(\roman*)]
    \item $G$ contains $W_5$ or
    \item $G$ contains a (not necessarily induced) spindle graph $M_4$ or $M_7$ or
    \item $G$ is 3-colourable.
\end{enumerate}
\end{corollary}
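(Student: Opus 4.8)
The plan is to deduce the corollary from the exact obstruction list of Theorem~\ref{BHS} rather than from the general structural dichotomies, and to use $bull$-freeness only to discard the three sporadic obstructions $G_1,G_2,G_3$. Note first that the statement does not assume connectivity, which already suggests Theorem~\ref{BHS} (valid for all $P_5$-free graphs) as the natural engine. If $G$ is $3$-colourable we are in alternative~(iii). Otherwise $G$ is a $P_5$-free graph that is not $3$-colourable, so Theorem~\ref{BHS} supplies one of $K_4,W_5,M_7,G_1,G_2,G_3$ as a (not necessarily induced) subgraph of $G$. Since $K_4\cong M_4$ and $M_7$ are spindle graphs, the cases $K_4$ and $M_7$ fall under~(ii), while $W_5$ is an odd wheel and falls under~(i). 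Thus everything reduces to showing that a $(bull,P_5)$-free graph never realises the cases $G_1,G_2,G_3$ without already exhibiting a $K_4$, $W_5$, or $M_7$.

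For this I would exploit the remark preceding the statement: each of $G_1,G_2,G_3$ contains an induced $bull$. Let $S$ be the vertex set of a copy of some $G_j$ inside $G$; then $G[S]$ is a supergraph of $G_j$ on the same vertices and, being an induced subgraph of $G$, is itself $bull$-free and $P_5$-free. The key step is the finite claim that every $bull$-free, $P_5$-free supergraph of $G_j$ already contains $K_4$, $W_5$, or $M_7$ as a subgraph: any way of adding edges to $G_j$ either produces one of these three dense subgraphs, or leaves an induced $bull$ (or creates an induced $P_5$), the latter two being impossible in $G$. As $G_1,G_2,G_3$ have only $7$, $10$ and $13$ vertices, this is a bounded inspection of supergraphs of three fixed graphs, and the resulting $K_4$, $W_5$, or $M_7$ is exactly the certificate required for the certifying algorithm.

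The main obstacle is precisely the gap between subgraph and induced-subgraph containment for $G_1,G_2,G_3$: since the copy of $G_j$ in $G$ need not be induced, one cannot simply quote that $G_j$ contains a $bull$, and instead must control every possible chord among the $7$, $10$ or $13$ vertices and check that no choice of chords simultaneously destroys all induced $bull$s, avoids an induced $P_5$, and avoids $K_4$, $W_5$ and $M_7$. For orientation and to recover the structural (rather than listed) content, one can cross-check via Theorem~\ref{thm:bullE}: because $E=S_{1,2,2}$ contains an induced $P_5$ (run the path through both length-$2$ legs, skipping the pendant), every $(bull,P_5)$-free graph is $(bull,E)$-free, and an induced odd wheel $W_{2p+1}$ with $p\ge 3$ would contain five consecutive rim vertices inducing a $P_5$; hence the only induced odd wheel compatible with $P_5$-freeness is $W_5$, matching~(i). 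This route pins down~(i) cleanly, but the sharp restriction of the spindle to $M_4$ or $M_7$ in~(ii) genuinely needs the finite list of Theorem~\ref{BHS}.
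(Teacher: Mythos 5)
Your route is the same one the paper takes: read the corollary off the obstruction list of Theorem~\ref{BHS}, with $W_5$ giving (i), $K_4\cong M_4$ and $M_7$ giving (ii), and $G_1,G_2,G_3$ excluded via $bull$-freeness. To your credit, you have put your finger on precisely the point the paper glosses over: Theorem~\ref{BHS} is about \emph{subgraph} containment, whereas the remark that $G_1,G_2,G_3$ ``are not $bull$-free'' only rules out \emph{induced} copies, so one still has to exclude non-induced copies whose extra chords destroy every induced $bull$. Identifying that mismatch is the right instinct, and your observation that the corollary does not assume connectivity (so Theorem~\ref{BHS} is the natural engine) is also fair.

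The difficulty is that your repair of this gap is only announced, not carried out. The ``finite claim'' that every $bull$-free, $P_5$-free supergraph of $G_j$ on the same vertex set contains $K_4$, $W_5$ or $M_7$ is the entire content of the missing step, and calling it ``a bounded inspection'' does not make it a proof: for $G_3$ on $13$ vertices there are more than $2^{50}$ chord patterns, and you give no reduction of that case analysis. As written, the central assertion of your argument is unverified. A cleaner closure, using only tools already in the paper: if $S$ is the vertex set of a (possibly non-induced) copy of $G_j$, then $H=G[S]$ is connected (it has the connected spanning subgraph $G_j$), is $(bull,E)$-free (since $E=S_{1,2,2}$ contains an induced $P_5$, as you note), and is not $3$-colourable (it contains the non-$3$-colourable subgraph $G_j$); Theorem~\ref{thm:bullE} then produces an odd wheel or a spindle inside $H$, and $P_5$-freeness forces the induced odd hole $Q$ in that proof to have length $5$, so the wheel is $W_5$ (or $K_4$) and the only spindles the $p=5$ analysis can output are $M_4$, $M_7$ or $M_{10}$, the $M_{10}$ configurations being excluded because their chains of $A$-type vertices contain an induced $P_5$. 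Applied componentwise, this argument proves the corollary directly from Theorem~\ref{thm:bullE}, which also shows that your closing claim --- that the sharp restriction to $M_4$ or $M_7$ ``genuinely needs'' the finite list of Theorem~\ref{BHS} --- is not accurate.
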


Moreover, for $P_6$-free graphs we can recall that in \cite{CGSZ} the following theorem was shown. 

\begin{theorem}[\rm \cite{CGSZ}]\label{CGSZ}
    A $P_6$-free graph is $3$-colourable if and only if it does not contain $F_1\cong K_4$, $F_2\cong W_5$, $F_3\cong M_7$, $F_4$,..., $F_{24}$ as a subgraph, defined in \cite{CGSZ}. 
\end{theorem}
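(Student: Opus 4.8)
The final statement is Theorem~\ref{CGSZ}, quoted from~\cite{CGSZ}, characterizing $3$-colourable $P_6$-free graphs by a finite list of forbidden subgraphs $F_1,\dots,F_{24}$. Since this is a cited external result rather than a claim the present paper proves, the honest proof plan is to reconstruct the argument as it is carried out in the source: one direction is trivial, and the substance lies entirely in the converse.

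\medskip
\noindent\textbf{The easy direction.} First I would dispose of necessity. Each of the listed graphs $F_1\cong K_4$, $F_2\cong W_5$, $F_3\cong M_7$, and $F_4,\dots,F_{24}$ is not $3$-colourable: $K_4$ and $W_5$ are immediate, $M_7$ is covered by the Proposition of~\cite{clawdiamondnet} quoted earlier in the excerpt, and the remaining $F_i$ would be verified by a direct (finite, machine-checkable) colouring argument. Since $3$-colourability is monotone under taking subgraphs, any graph containing some $F_i$ as a subgraph is not $3$-colourable. This gives the ``only if'' direction without reference to $P_6$-freeness.

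\medskip
\noindent\textbf{The hard direction.} The real work is the converse: a $P_6$-free graph containing none of $F_1,\dots,F_{24}$ as a subgraph is $3$-colourable. The plan is to reduce to the $4$-critical case, i.e. to show that the only $4$-critical $P_6$-free graphs are exactly those on the list (up to the subgraph relation). A graph is $4$-critical if it is not $3$-colourable but every proper subgraph is; any minimal non-$3$-colourable graph is $4$-critical, so it suffices to enumerate all $4$-critical $P_6$-free graphs and check each contains some $F_i$. I would follow the structural approach: fix a $4$-critical $P_6$-free graph $G$, exploit that criticality forces minimum degree at least $3$ and severely constrains the neighbourhoods, and combine this with $P_6$-freeness to bound the diameter and the local structure. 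The key tool is a dominating-subgraph argument: $P_6$-free graphs admit a small connected dominating set (a dominating $P_4$ or clique), and one then performs a case analysis on how the remaining vertices attach to this dominating structure, repeatedly using that long induced paths are forbidden to collapse the cases. Each surviving configuration is shown either to be $3$-colourable (contradicting non-$3$-colourability) or to contain one of the $F_i$.

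\medskip
\noindent\textbf{The main obstacle.} The principal difficulty is the sheer size and delicacy of the case analysis needed to prove that the list of twenty-four obstructions is \emph{complete}: one must certify that no $4$-critical $P_6$-free graph escapes the list. This is where $P_6$-freeness must be deployed most carefully, since it is a weaker constraint than $P_5$-freeness and permits genuinely larger critical graphs (indeed the list jumps from six obstructions for $P_5$ to twenty-four for $P_6$). In practice the enumeration is completed with substantial computer assistance to rule out the many near-critical configurations that arise, and the correctness of that search is the delicate point. For the purposes of the present paper, however, this theorem is invoked as a black box from~\cite{CGSZ}, so no independent reconstruction of the exhaustive case analysis is required here; it is enough to note that $F_1,\dots,F_{24}$ fail to be $bull$-free (as observed after Theorem~\ref{BHS} for $G_1,G_2,G_3$, and checkable directly for the $F_i$), which is the only feature of the statement our subsequent arguments will use.
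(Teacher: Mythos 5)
The paper offers no proof of this theorem: it is quoted verbatim from \cite{CGSZ} and used as a black box, exactly as you conclude, and your sketch of the source's argument (the easy monotonicity direction plus the enumeration of $4$-critical $P_6$-free graphs, completed with computer assistance) is a fair reconstruction of how \cite{CGSZ} actually proceeds. However, your closing sentence contains a factual error that contradicts the paper: it is \emph{not} true that $F_1,\dots,F_{24}$ all fail to be $bull$-free. The paper explicitly observes that $F_2\cong W_5$, $F_3\cong M_7$, $F_{12}$, $F_{15}$ and $F_{18}$ \emph{are} $(K_4,bull)$-free, and this is precisely why the corollary for $(bull,P_6)$-free graphs must list $F_{12}$, $F_{15}$ and $F_{18}$ (the Mycielski graph) as additional possible obstructions, unlike the $P_5$ case where $G_1,G_2,G_3$ all contain a $bull$ and so drop out. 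So the feature of the theorem the paper actually uses is the opposite of what you state: one must identify exactly which obstructions survive $bull$-freeness, not note that none do. Aside from that slip, which affects the downstream corollary rather than the theorem itself, your treatment matches the paper's.
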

It is easy to check that $F_2\cong W_5$, $F_3\cong M_7$, $F_{12}$, $ F_{15}$ and $F_{18}$ (see Figure~\ref{F123}) are the only $(K_4,bull)$-free graphs.
Note that $F_{18}$ is well known as the Mycielski graph. This leads to another corollary from Theorem \ref{thm:bullH}. 

\begin{figure}[htb]
    \begin{center}
    \begin{subfigure}{0.3\textwidth}
    \includegraphics[width=0.9\linewidth]{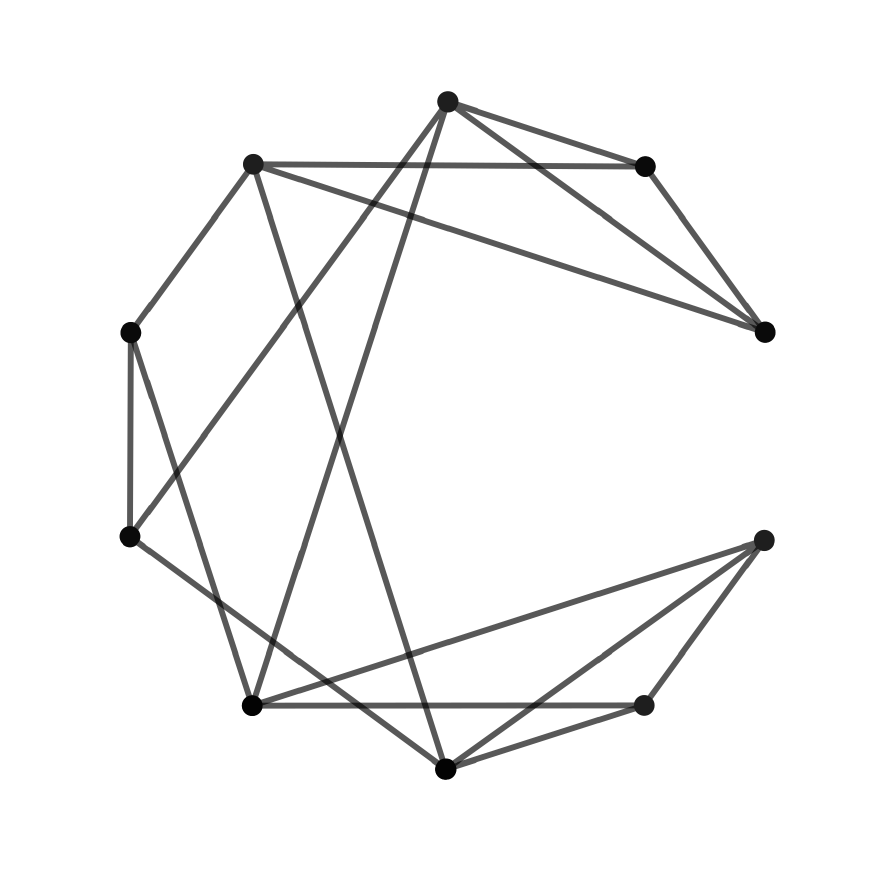} 
    \caption*{$F_{12}$}
    \end{subfigure}
    \begin{subfigure}{0.3\textwidth}
    \includegraphics[width=0.9\linewidth]{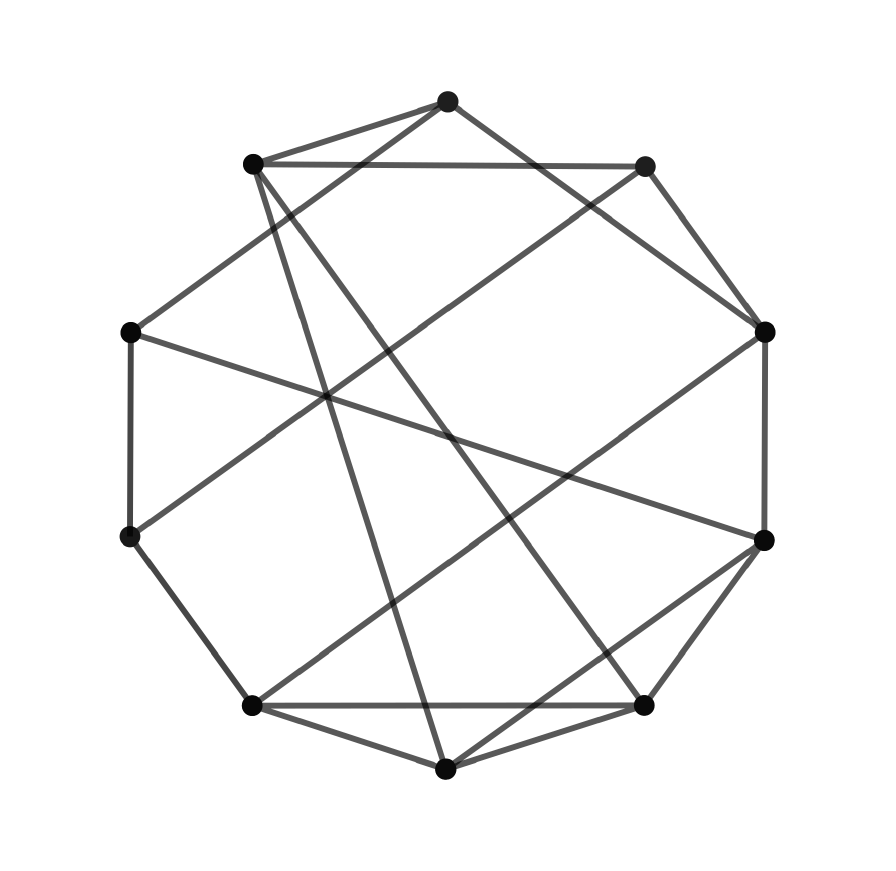}
    \caption*{$F_{15}$ }
    \end{subfigure}
    \begin{subfigure}{0.3\textwidth}
    \includegraphics[width=0.9\linewidth]{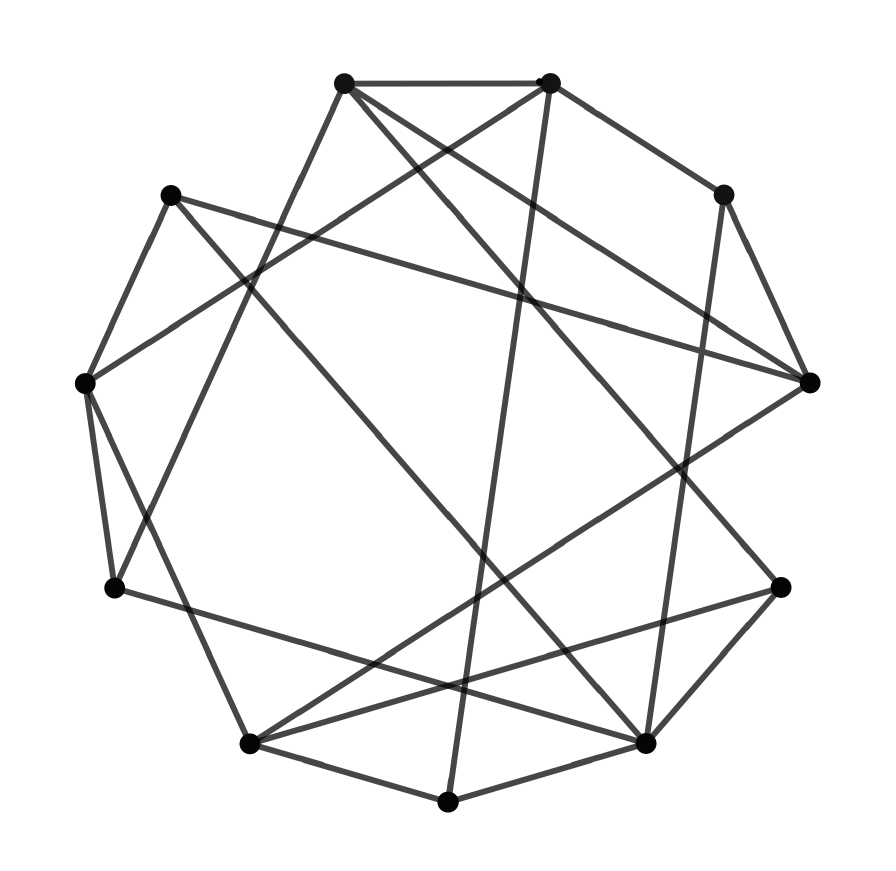}
    \caption*{$F_{18}$}
    \end{subfigure}
    \caption{The graphs $F_{12}$, $ F_{15}$ and $F_{18}$  from Theorem \ref{CGSZ}.}
    \label{F123}
    \end{center}
\end{figure}

\begin{corollary}
    Let $G$ be a $(bull, P_6)$-free graph. Then
    \begin{enumerate}[label=(\roman*)]
    \item $G$ contains $W_5$ or
    \item $G$ contains a (not necessarily induced) spindle graph $M_4$ or $M_7$ or
    \item $G$ contains $F_{12}$, $ F_{15}$  or $F_{18}$ or
    \item $G$ is 3-colourable.
\end{enumerate}
\end{corollary}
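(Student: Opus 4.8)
The plan is to derive the corollary from Theorem~\ref{CGSZ} together with the $bull$-freeness of $G$, exploiting the quoted remark that among $F_1,\dots,F_{24}$ the only $(K_4,bull)$-free graphs are $F_2\cong W_5$, $F_3\cong M_7$, $F_{12}$, $F_{15}$ and $F_{18}$. If $G$ is $3$-colourable, conclusion (iv) holds, so assume $\chi(G)\ge 4$. Since $G$ is $P_6$-free, Theorem~\ref{CGSZ} tells us that $G$ contains some $F_i$, $1\le i\le 24$, as a subgraph, and the whole task is to show that the $bull$-freeness of $G$ collapses the $24$ possibilities onto the six graphs $K_4\cong M_4$, $W_5$, $M_7$, $F_{12}$, $F_{15}$, $F_{18}$.

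The point I would settle first is that $bull$-freeness is a condition on \emph{induced} subgraphs, whereas Theorem~\ref{CGSZ} only supplies $F_i$ as a (not necessarily induced) subgraph; a $bull$ living inside such an $F_i$ need not be induced in $G$, so at face value the hypothesis cannot be applied to it. To make $bull$-freeness bite, I would replace $G$ by a vertex-minimal non-$3$-colourable induced subgraph $H$: such an $H$ exists because $\chi(G)\ge 4$, it is $4$-vertex-critical, and as an induced subgraph of $G$ it is both $P_6$-free and $bull$-free. Using that $F_1,\dots,F_{24}$ is the complete list of $P_6$-free $4$-vertex-critical graphs of \cite{CGSZ} (equivalently, the minimal obstructions to $3$-colourability within the $P_6$-free class), the critical graph $H$ is isomorphic to one of the $F_i$, and this copy is now genuinely induced in $G$.

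With $H\cong F_i$ induced in $G$, the $bull$-freeness of $G$ forces $F_i$ to contain no induced $bull$. By the quoted remark every $F_i$ outside $\{F_2,F_3,F_{12},F_{15},F_{18}\}$ contains $K_4$ or a $bull$; having excluded the $bull$, such an $F_i$ must contain $K_4$. I would then read off the alternatives: if $F_i$ contains $K_4$ then, as $M_4\cong K_4$, the graph $G$ contains the spindle $M_4$ and (ii) holds; $F_2\cong W_5$ gives (i); $F_3\cong M_7$ gives (ii); and $F_{12}$, $F_{15}$ or $F_{18}$ give (iii). Since every case lands in (i)--(iii) (or (iv) in the colourable case), this is exhaustive.

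The main obstacle is exactly the induced-versus-subgraph mismatch for the $bull$: the device of passing to a minimal non-$3$-colourable induced subgraph $H$ is what converts the global hypothesis ``$G$ is $bull$-free'' into the usable statement ``$H$ has no induced $bull$'', and hence lets us discard the $F_i$ that contain a $bull$. The only remaining ingredient is the finite verification underlying the remark, namely inspecting the $24$ explicit graphs of \cite{CGSZ} to confirm that precisely $F_2,F_3,F_{12},F_{15},F_{18}$ are $(K_4,bull)$-free; this is routine but must be carried out graph by graph.
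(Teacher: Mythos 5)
Your proposal is correct and follows essentially the same route the paper intends: the corollary is stated as an immediate consequence of Theorem~\ref{CGSZ} together with the finite check that $F_2\cong W_5$, $F_3\cong M_7$, $F_{12}$, $F_{15}$, $F_{18}$ are the only $(K_4,bull)$-free graphs on the list, and the paper supplies no further argument. Your extra step of passing to a $4$-vertex-critical induced subgraph to resolve the induced-versus-subgraph mismatch for the $bull$ is a sound (and welcome) way to make rigorous what the paper leaves implicit, and it matches the actual content of \cite{CGSZ}, whose $F_1,\dots,F_{24}$ are precisely the $P_6$-free $4$-vertex-critical graphs.
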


\vspace{1cm}
The organization of the paper is the following. In Section~\ref{sec:preliminary} we provide preliminary results and properties for $bull$-free graphs. Next, in Section~\ref{sec:proof_bull_chair} we prove Theorem~\ref{thm:bullchair} and in Section~\ref{sec:proof_bull_E} we prove Theorem~\ref{thm:bullE}. Finally, in Section~\ref{sec:algorithm} we show that the proofs of Theorem~\ref{thm:bullchair} and Theorem~\ref{sec:proof_bull_E} provide polynomial time certifying algorithms for $3$-colourability in the class of $(bull,H)$-free graphs for $H\in \{S_{1,1,2}, S_{1,2,2}\}$.

%%%%%%%%%%%%%%%%%%%%%%%%%%%%%%%%%%%%%%%%%%%%%%%%%%%%%%%%%%%%%%%%%%%%%%%%%%%%%%%%%%%%%%%%%%%%%%%%%%%%%%%%%%%%%%%%%%%%%%%%%%%%%%%%%%%%%%%%%%%%%%%%%%%%%%%%%%%%%%%%%%%%%%%%%%%%%%%%%%%%%%%%%%%%%%%%%%
\section{Preliminary results}\label{sec:preliminary}

%In addition, we remark that for $X$ chosen as $B_{1,2}$,
%the non-perfect graphs of $\GG$ have a simple structure
%%(see Theorem~\ref{thm-bull-exceptions}).

We recall that a {\it hole} in a graph $G$ is an induced cycle of
length at least $4$, and an {\it antihole} in $G$ is an induced subgraph whose
complement is a cycle of length at least~$4$.
A hole (antihole) is {\it odd} if it has an odd number of vertices.
As the main tool for proving Theorems~\ref{thm:bullchair} and \ref{thm:bullE},
we will use the well-known Strong Perfect Graph Theorem
shown by Chudnovsky et al.~\cite{ChRST06}.

\begin{theorem}[Chudnovsky et al.~\cite{ChRST06}]
\label{tSPGT}
A graph is perfect if and only if it contains neither an odd hole
nor an odd antihole as an induced subgraph.
\end{theorem}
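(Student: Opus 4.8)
The plan is to prove the two implications separately, with essentially all of the difficulty concentrated in the ``only if'' direction. For the easy direction I would first record that perfection is hereditary (closed under taking induced subgraphs), so it suffices to check that each forbidden obstruction is itself imperfect: an odd hole $C_{2k+1}$ with $k\ge 2$ has $\omega=2$ but $\chi=3$, and an odd antihole $\overline{C_{2k+1}}$ has $\omega=k$ but $\chi=k+1$. Hence any graph containing either as an induced subgraph is not perfect. Calling a graph \emph{Berge} if neither it nor any induced subgraph contains an odd hole or an odd antihole, the whole theorem reduces to proving that every Berge graph is perfect.

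For the hard direction I would argue by minimal counterexample: assume some Berge graph is imperfect and let $G$ be one of minimum order, so $G$ is \emph{minimally imperfect} (every proper induced subgraph is perfect, yet $\chi(G)>\omega(G)$). Following Chudnovsky, Robertson, Seymour and Thomas, the argument rests on three pillars: (1) a structural decomposition theorem asserting that every Berge graph is either \emph{basic} or admits one of a short list of structural cutsets; (2) the perfection of all basic graphs; and (3) ``preservation'' lemmas showing that a minimally imperfect graph can display none of those cutsets. Together these force a contradiction, since $G$ can be neither basic (basic graphs are perfect) nor decomposable (minimal imperfection forbids the cutsets).

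Concretely, the decomposition theorem I would aim for states that every Berge graph lies in one of five basic classes --- bipartite graphs, line graphs of bipartite graphs, their two complements, and double split graphs --- or else admits a proper $2$-join, a proper complement $2$-join, or a balanced skew partition. Perfection of the basic classes is classical: bipartite graphs by K\"onig's theorem, line graphs of bipartite graphs by K\"onig's edge-colouring theorem, double split graphs by direct inspection, and the two complementary families by the Lov\'asz Weak Perfect Graph Theorem. For the preservation step I would invoke that $2$-joins preserve perfection (Cornu\'ejols--Cunningham), so no minimally imperfect graph has a proper $2$-join nor, by complementation, a proper complement $2$-join; and that no minimally imperfect graph admits a balanced skew partition, the deepest of the auxiliary results, connected to Chv\'atal's skew-partition conjecture.

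The hard part, by an overwhelming margin, is the decomposition theorem itself. Its proof is a long, configuration-driven case analysis organised around whether $G$ contains certain induced substructures --- prisms and ``stretchers'', the line graph of a bipartite subdivision of $K_4$, and wheels of various parities --- with the Roussel--Rubio lemma serving as a recurring tool to control how a connected set attaches to an anticonnected set. I expect essentially all of the effort to go here: one must show that the presence of any critical configuration forces a $2$-join or a balanced skew partition, while a Berge graph free of all of them is basic. I would then close by assembling the pillars: $G$ is Berge, hence basic or decomposable; basic is impossible because $G$ is imperfect; and each decomposition is impossible by the preservation lemmas --- and the resulting contradiction completes the proof.
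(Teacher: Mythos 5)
This statement is the Strong Perfect Graph Theorem, and the paper does not prove it at all: it is imported by citation from \cite{ChRST06} and used as a black box in Section~2, so there is no in-paper proof to compare your attempt against. The only meaningful comparison is with the original proof of Chudnovsky, Robertson, Seymour and Thomas, and measured against that your outline is a faithful and correctly attributed pr\'ecis. The easy direction is complete as you wrote it: perfection is hereditary, $\chi(C_{2k+1})=3>2=\omega(C_{2k+1})$ for $k\geq 2$, and $\chi(\overline{C_{2k+1}})=k+1>k=\omega(\overline{C_{2k+1}})$. The three pillars you name --- the decomposition of Berge graphs into the five basic classes or a proper $2$-join in $G$ or $\overline{G}$ or a balanced skew partition; perfection of the basic classes via K\"onig's theorems, inspection for double split graphs, and Lov\'asz's Weak Perfect Graph Theorem for the complementary classes; and the preservation lemmas (Cornu\'ejols--Cunningham for $2$-joins, with complementation legitimised by the WPGT, plus the no-balanced-skew-partition lemma) --- are exactly the architecture of \cite{ChRST06}. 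One historical nicety: the decomposition as published in \cite{ChRST06} allows a homogeneous pair ($M$-join) as an additional outcome, disposed of via the Chv\'atal--Sbihi theorem that no minimally imperfect graph has one; the cleaner form you state, without that outcome, is Chudnovsky's later strengthening. Either version supports the argument, so this is not an error.

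The genuine gap --- which, to your credit, you flag yourself --- is that the two hardest ingredients, the decomposition theorem and the lemma that no minimally imperfect graph admits a balanced skew partition, are asserted rather than proved. These occupy essentially all of the roughly 150 pages of \cite{ChRST06}, and nothing in your sketch (prisms, wheels, the Roussel--Rubio lemma) is developed far enough to substitute for them; a reader could not verify the theorem from your text without consulting the source. As a self-contained proof the proposal is therefore incomplete, though as an account of how the theorem is actually proved it is accurate, and it in any case contains strictly more detail than this paper, which simply quotes the result.
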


In the following we will consider 3-colourability in subclasses of $bull$-free graphs. Here are some useful reductions:
\begin{itemize}
\item
If $\Delta(G) \leq 3,$ then $G$ is 3-colourable by Brook's Theorem.
\item
If $G$ has a vertex $w$ of degree at most 2, then $G$ is 3-colourable if and only if $G-w$ is 3-colourable. So we can reduce 
$G$ to $G-w.$
\item
If $G$ contains $K_4=M_4$, then $G$ is not 3-colourable.
\item If a graph $G$ contains an odd antihole $\overline{C_{2t+1}}$ with $t\geq 4$, then $G$ contains $K_4$. If $t=3$, then $G$ contains the spindle graph~$M_7$, and finally, if $t=2$, we have an antihole~$\overline{C_5}$, which is isomorphic to the hole~$C_5$. 
%\item If a graph $G$ is perfect, then it is 3-colourable if and only if its clique number is equal to $3$, i.e. it does not contain any complete graph $K_4=M_4$. 
\item
If $G$ is not connected, then we can check 3-colourability for each component of $G$ seperately. Moreover, 
if $G$ has a cut-vertex $w,$ let $G_1, G_2, \ldots, G_t$ be the components of $G-w.$ Now we check whether each 
induced subgraph $G_i'=G[G_i \cup \{w\}]$ is 3-colourable. 
If all of $G'_1, \ldots, G'_t$ are 3-colourable, 
then we can combine their 3-colourings to obtain a 3-colouring of $G$.
\end{itemize}

%These reductions show that we can restrict our 3-colourability test to the class of  claw-free graphs that are 2-adjacent, $K_4$-free, and where $3 \leq \delta(G) \leq \Delta(G).$

These reductions show that we can restrict our 3-colourability test to the class of 
$bull$-free graphs that are 2-connected, $K_4$-free, and where $\delta(G) \geq 3.$
Furthemore, we can assume without losing generality that the graph $G$ contains an odd hole $C_{2p+1}$.

%%%%%%%%%%%%%%%%%%%%%%%%%%%%%%%%%%%%%%%%%%%%%%%%%%%
%%%%%%%%%%%%%%%%%%%%%%%%%%%%%%%%%%%%%%%%%%%%%%%%%%%
%%%%%%%%%%%%%%%%%%%%%%%%%%%%%%%%%%%%%%%%%%%%%%%%%%%
%%%%%%%%%%%%%%%%%%%%%%%%%%%%%%%%%%%%%%%%%%%%%%%%%%%
%%%%%%%%%%%%%%%%%%%%%%%%%%%%%%%%%%%%%%%%%%%%%%%%%%%
%%%%%%%%%%%%%%%%%%%%%%%%%%%%%%%%%%%%%%%%%%%%%%%%%%%
\subsection{Properties for $bull$-free graphs}

Let $Q=v_1 v_2 \ldots v_p v_1$ be the smallest induced odd hole in the graph $G$ and $w\in V(G) \setminus Q$. 
We define $q(w)$ as the largest $i$ such that $w$ has $i$ consecutive neighbours on the cycle~$Q$. Thus, there is $1\leq j \leq p$ satisfying $\{v_j, v_{j+1}, \ldots, v_{j+i-1}\}\subset N_{Q}(w)$. 
All indices are taken modulo $p$.

We will prove some useful facts about this value. 
       
\begin{fact}\label{values_q} 
    If $p>5$, then $q(w) \in \{1, 3\}$. 
    If $p=5$, then $q(w)\in \{1, 3, 4\}$. 
\end{fact}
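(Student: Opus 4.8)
The plan is to analyse $q(w)$ through a \emph{maximal} run of consecutive neighbours of $w$ on $Q$ and to rule out every forbidden value by producing either an induced bull or an odd wheel. Write $r=q(w)$ and fix a maximal run $v_j,v_{j+1},\dots,v_{j+r-1}$ of neighbours of $w$; by maximality $w\not\sim v_{j-1}$ and $w\not\sim v_{j+r}$, and since $Q$ is an induced cycle it has no chords, so two of its vertices are adjacent if and only if they are consecutive. The values I must exclude are $r=2$ (for every $p$), $r\ge 4$ with $r<p$ (only when $p>5$), and $r=p$, i.e.\ $w$ complete to $Q$. First I would dispose of the last case: if $r=p$ then $G[Q\cup\{w\}]$ is the odd wheel $W_p$, so we may assume $r\le p-1$; in particular $q(w)\le 4$ when $p=5$.

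The core of the argument is two bull constructions anchored at the triangle $T=\{v_j,v_{j+1},w\}$ (a triangle because $v_j,v_{j+1}$ are consecutive neighbours of $w$). For $r=2$, I claim $\{v_{j-1},v_j,v_{j+1},w,v_{j+2}\}$ induces a bull: $v_{j-1}$ is attached only to $v_j$ (it is non-adjacent to $v_{j+1}$, at cyclic distance $2$, and to $w$ by maximality), $v_{j+2}$ is attached only to $v_{j+1}$ (non-adjacent to $v_j$, and to $w$ by maximality since $r=2$), and the two pendants $v_{j-1},v_{j+2}$ are non-adjacent because they sit at cyclic distance $3$, which holds for every $p\ge 5$. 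This contradicts bull-freeness, so $q(w)\ne 2$ for all $p$.

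For $r\ge 4$ (with $r<p$) I would instead use the pendant $v_{j+3}$, which now lies \emph{inside} the run and is therefore adjacent to $w$, while remaining non-adjacent to $v_j$ (distance $3$) and $v_{j+1}$ (distance $2$). Thus $\{v_{j-1},v_j,v_{j+1},w,v_{j+3}\}$ has the triangle $T$ with $v_{j-1}$ a pendant on $v_j$ and $v_{j+3}$ a pendant on $w$, and this is a bull \emph{precisely} when the two pendants $v_{j-1},v_{j+3}$ are non-adjacent. Their cyclic distance is $\min(4,p-4)$, which is $\ge 2$ exactly when $p\ge 7$ and equals $1$ when $p=5$; hence bull-freeness forbids $r\ge 4$ when $p>5$.

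This final non-adjacency is the crux, and it is exactly where the dichotomy between $p=5$ and $p>5$ is forced: when $p=5$ the vertices $v_{j-1}$ and $v_{j+3}=v_{j-2}$ are consecutive, so the construction collapses and a run of length $4$ survives, which is why $4$ is admissible only for $p=5$. The main thing to get right is therefore the cyclic-distance bookkeeping together with the maximality of the run (which guarantees $w\not\sim v_{j-1}$, and for $r=2$ also $w\not\sim v_{j+2}$), since these are what make the pendants ``clean'' and mutually non-adjacent. Once they are checked, bull-freeness excludes $r=2$ for all $p$ and $r\ge 4$ for $p\ge 7$, and the odd-wheel observation excludes $r=p$, leaving $q(w)\in\{1,3\}$ when $p>5$ and $q(w)\in\{1,3,4\}$ when $p=5$.
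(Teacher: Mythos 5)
Your proof is correct and follows essentially the same route as the paper: the value $2$ is excluded by the bull on $\{v_{j-1},v_j,v_{j+1},v_{j+2},w\}$, values $4\le q(w)<p$ (for $p\ge 7$) by the bull on $\{v_{j-1},v_j,v_{j+1},v_{j+3},w\}$, and $q(w)=p$ by the odd wheel $W_p$. You merely spell out the cyclic-distance and maximality checks that the paper leaves implicit, including the observation that for $p=5$ the second construction degenerates because $v_{j+3}=v_{j-2}$ is adjacent to $v_{j-1}$.
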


\begin{proof} 
    Firstly, note that if $q(w)=2$, then the set of vertices $\{v_{j-1}, v_j, v_{j+1}, v_{j+2}, w\}$ induces $bull$. 
    If $4 \leq q(w) < p$ and $p \geq 7$, then the set of vertices $\{v_{j-1}, v_j, v_{j+1}, v_{j+3}, w\}$ induces $bull$. 
    If $q(w)=p$, then the graph $G$ contains an odd wheel $W_p$.
\end{proof}
        
\begin{fact}\label{q3}
    If $q(w)=3$, then $d_{Q}(w)=3$.
\end{fact}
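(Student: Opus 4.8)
The plan is to argue by contradiction: assume $w$ has a fourth neighbour on $Q$ and then exhibit an induced $bull$, contradicting $bull$-freeness. By the rotational symmetry of $Q$ I may take the maximal run of three consecutive neighbours to be $v_1,v_2,v_3$. Maximality of this run (that is, $q(w)=3$) forces $w\not\sim v_p$ and $w\not\sim v_4$, and since $Q$ is an induced cycle, any two vertices at cyclic distance at least $2$ on $Q$ are non-adjacent — this is the fact I would use throughout to certify the needed non-edges. If $p=5$ the only vertices outside the run are $v_4$ and $v_5=v_0$, both already excluded, so $d_Q(w)=3$ immediately; hence I may assume $p\geq 7$.

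Now suppose $w$ has an additional neighbour $v_k$. The constraints above confine it to $5\leq k\leq p-1$. The key step is to build, according to the value of $k$, one of two bulls, each using one of the two triangles $wv_1v_2$ and $wv_2v_3$ inside the run, with $v_k$ serving as a pendant at $w$. First I would verify that $\{v_p,v_1,v_2,w,v_k\}$ induces a $bull$ with triangle $wv_1v_2$, pendant $v_p$ at $v_1$, and pendant $v_k$ at $w$; checking the pairs, the only non-edge that can fail is $v_pv_k$, which is an edge exactly when $k=p-1$. Symmetrically, $\{v_2,v_3,v_4,w,v_k\}$ induces a $bull$ with triangle $wv_2v_3$, pendant $v_4$ at $v_3$, and pendant $v_k$ at $w$; here the only non-edge that can fail is $v_4v_k$, an edge exactly when $k=5$.

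To conclude, I observe that the two exceptional values cannot coincide: $k=p-1$ together with $k=5$ would force $p=6$, impossible since $p$ is odd. Concretely, if $k\neq p-1$ the first set is a $bull$, while if $k=p-1$ then $p\geq 7$ gives $p-1\geq 6\neq 5$, so $k\neq 5$ and the second set is a $bull$. In either case $G$ contains an induced $bull$, a contradiction, so $w$ has no neighbour besides $v_1,v_2,v_3$ and $d_Q(w)=3$.

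The routine part is the edge-by-edge confirmation that the two five-vertex sets are genuinely induced bulls, all reducible to the induced-cycle non-adjacency observation. The only real subtlety — and where care is needed — lies in the boundary cases $k=p-1$ and $k=5$, where the candidate pendant $v_k$ becomes adjacent to the other pendant; these are precisely the configurations that the two symmetric triangles are designed to cover, and the parity of $p$ is exactly what rules out their simultaneous failure.
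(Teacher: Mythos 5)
Your proof is correct and follows essentially the same route as the paper: the paper's argument also supposes a fourth neighbour $v_k$ (noting this forces $p\geq 7$) and exhibits one of the two five-vertex sets $\{v_{j-1},v_j,v_{j+1},v_k,w\}$ or $\{v_{j+1},v_{j+2},v_{j+3},v_k,w\}$ as an induced $bull$, which are exactly your two candidate sets after setting $j=1$. Your explicit identification of the boundary cases $k=p-1$ and $k=5$ and the parity argument ruling out their coincidence is a welcome elaboration of the paper's terser ``one of the sets \ldots induces $bull$.''
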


\begin{proof}
    Suppose $q(w)=3$ and there is $v_k \in N_Q(w)$ with $v_k \notin \{v_j, v_{j+1}, v_{j+2}\}$. Hence, we have $p \geq 7$.
    Then one of the sets $\{v_{j-1}, v_j, v_{j+1}, v_k, w\}$ or $\{v_{j+1}, v_{j+2}, v_{j+3}, v_k, w\}$ induces $bull$.
\end{proof}
       
\begin{fact}\label{q1}
    If $q(w)=1$, then $d_{Q}(w)\in\{1, 2\}$. 
    Moreover, if $q(w)=1$ and $d_{Q}(w)=2$, then there is $i$ such that $N_{Q}(w)=\{v_i, v_{i+2}\}$.
\end{fact}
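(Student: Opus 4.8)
The plan is to exploit the minimality of $Q$ as the \emph{smallest} induced odd hole, converting a parity count into a contradiction. Recall that $p$ is odd with $p\ge 5$, that $w\notin Q$, and that $Q$ is an induced cycle. Since $q(w)=1$, the vertex $w$ has at least one neighbour on $Q$, but no two of its neighbours are consecutive along $Q$. Writing $d=d_Q(w)$, I would list the neighbours in cyclic order as $v_{a_1},v_{a_2},\dots,v_{a_d}$; these split $Q$ into $d$ arcs, where the $i$-th arc is the sub-path of $Q$ from $v_{a_i}$ to $v_{a_{i+1}}$ (indices mod $d$), of length $\ell_i$. Because no two neighbours of $w$ are consecutive, each $\ell_i\ge 2$, and clearly $\sum_{i=1}^d \ell_i=p$.

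The first thing I would establish is that, for each $i$, the set consisting of $w$ together with the vertices of the $i$-th arc induces a hole of length $\ell_i+2$. Indeed, the arc is chordless since $Q$ is induced, and $w$ is adjacent to exactly the two endpoints $v_{a_i},v_{a_{i+1}}$ of the arc and to none of its interior vertices, precisely because $v_{a_i}$ and $v_{a_{i+1}}$ are \emph{consecutive} neighbours of $w$ in the cyclic order. Hence $w\,v_{a_i}\,v_{a_i+1}\cdots v_{a_{i+1}}\,w$ is an induced cycle, and since $\ell_i\ge 2$ it is a hole of length $\ell_i+2\ge 4$.

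With this in hand, I would first bound $d$. Suppose $d\ge 3$. Then $\ell_i=p-\sum_{j\ne i}\ell_j\le p-2(d-1)\le p-4$, so each associated hole has length $\ell_i+2\le p-2<p$. Consequently, if some $\ell_i$ were odd, that hole would be an odd hole strictly shorter than $Q$ (its length is odd and at least $5$), contradicting the minimality of $Q$. Thus every $\ell_i$ is even, forcing $\sum_i\ell_i$ to be even; but $\sum_i\ell_i=p$ is odd, a contradiction. Hence $d\le 2$, and since $q(w)=1$ gives $d\ge 1$, we get $d_Q(w)\in\{1,2\}$. For the \emph{moreover} part, suppose $d_Q(w)=2$ with arcs of lengths $\ell_1,\ell_2$, where $\ell_1+\ell_2=p$ and $\ell_1,\ell_2\ge 2$. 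As $p$ is odd, exactly one of them, say $\ell_1$, is odd, and the corresponding hole has odd length $\ell_1+2$; minimality of $Q$ then yields $\ell_1+2\ge p$, i.e. $\ell_1\ge p-2$, so $\ell_2=p-\ell_1\le 2$, and with $\ell_2\ge 2$ we conclude $\ell_2=2$. Thus the two neighbours of $w$ are at cyclic distance $2$, i.e. $N_Q(w)=\{v_i,v_{i+2}\}$ for some $i$.

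The main obstacle I anticipate is the second step: one must argue carefully that $w$ together with each arc is genuinely an \emph{induced} subgraph, with no hidden chord from $w$ to an interior arc vertex. This is exactly the place where the hypothesis $q(w)=1$ is essential, and one must also verify that the resulting hole is both odd and strictly shorter than $Q$ before invoking minimality. Once the induced-hole claim is secured, everything else is a short parity argument.
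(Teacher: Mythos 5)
Your proof is correct, and it shares the paper's central idea -- a neighbour configuration forbidden by the statement would force an induced odd cycle of length at least $5$ strictly shorter than $Q$, contradicting the choice of $Q$ as the smallest induced odd hole -- but the execution is different and, in fact, more complete. The paper argues pairwise: it takes two neighbours of $w$ at cyclic distance at least $3$, observes that one of the two resulting cycles through $w$ is odd, and then extracts an induced odd cycle from that (possibly non-induced) cycle, noting it cannot be a triangle because $q(w)=1$; the reduction from ``no two neighbours at distance $\ge 3$'' to ``$d_Q(w)\le 2$'' is left implicit. You instead decompose $Q$ into the $d$ arcs between cyclically consecutive neighbours, note that each arc together with $w$ is already an \emph{induced} cycle (so no extraction step is needed), and run a global parity count: for $d\ge 3$ all arc lengths would have to be even while summing to the odd number $p$. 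This buys you a uniform treatment of the cases $d\ge 3$ and $d=2$ and makes the bound $d_Q(w)\le 2$ explicit rather than implicit, at the modest cost of setting up the arc decomposition. Both arguments are valid; yours closes a small gap the paper glosses over.
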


\begin{proof}
    Suppose $w$ has two neighbours $v_i, v_j$ in $Q$, satisfying $|i-j| \geq 3$, where $i>j$ . Then either the cycle $w v_i v_{i+1} \ldots v_j$ or the cycle $w v_j v_{j+1} \ldots v_i$ is odd. This cycle must contain an induced odd cycle $Q'$, which is shorter than $Q$. Since $w$ has no consecutive neighbours on $Q$, the cycle $Q'$ is not $K_3$.  
\end{proof}

\subsection{Properties for $(bull, E)$-free graphs}

We can now define the following sets:

\noindent $\bullet$ $A_i= \{ v \in V\setminus Q : N_Q(v)=\{v_i\} \}$.

\noindent $\bullet$ $B_i= \{ v \in V\setminus Q : N_Q(v)=\{v_i, v_{i+2}\} \}$.  

\noindent $\bullet$ $C_i= \{ v \in V\setminus Q : N_Q(v)=\{v_i, v_{i+1}, v_{i+2}\} \}$.

\noindent $\bullet$ $D_i= \{ v \in V\setminus Q : N_Q(v)=\{v_i, v_{i+1}, v_{i+2}, v_{i+3}\} \}$.

Let $A'_i= \{v\in A_i: \exists v'\in A_i, vv' \in E(G)\}$ and $B'_i= \{v\in B_i: \exists v'\in B_i \cup C_i, vv' \in E(G)\}$. Then let $A_i^*=A_i\setminus A_i'$ and $B_i^* = B_i\setminus B_i'$.

Let also $A = \bigcup_{i=1}^p A_i$, $B = \bigcup_{i=1}^p B_i$, $B' = \bigcup_{i=1}^p B_i'$,  $C = \bigcup_{i=1}^p C_i$ and $D = \bigcup_{i=1}^p D_i$. 

We want to prove that $V(G) = Q \cup A \cup B \cup C \cup D$. 
This is true due to the facts above and to the following lemma. 

\begin{Lemma}\label{lem:Qdominat}
    $Q$ is a dominating set in $G$.
\end{Lemma}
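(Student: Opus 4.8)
The plan is to argue by contradiction, so suppose $Q$ is not a dominating set. Then some vertex of $V(G)\setminus Q$ has no neighbour on $Q$, and I would first use connectivity to extract a convenient local configuration. Take a shortest path $w_0 w_1 \dots w_k$ from such a vertex to $Q$, with $w_k \in Q$ and $w_0,\dots,w_{k-1}\notin Q$. Since $N_Q(w_0)=\emptyset$ we have $k\ge 2$, and minimality of the path forces $N_Q(w_{k-2})=\emptyset$ while $N_Q(w_{k-1})\ne\emptyset$ (because $w_{k-1}\sim w_k\in Q$). Writing $u:=w_{k-1}$ and $w:=w_{k-2}$, I obtain an edge $uw$ with $u\notin Q$, $N_Q(u)\ne\emptyset$, and $w\notin Q$, $N_Q(w)=\emptyset$. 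The whole argument then reduces to showing that such a pair $(u,w)$ cannot exist in a $(bull,E)$-free graph. By Facts~\ref{values_q}, \ref{q3} and \ref{q1}, and since $q(u)=p$ would exhibit the odd wheel $W_p$ (so alternative (i) of Theorem~\ref{thm:bullE} holds and there is nothing left to prove), I may assume $q(u)\in\{1,3\}$, together with $q(u)=4$ in the exceptional case $p=5$. I then split on $q(u)$.

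In the case $q(u)\ge 3$ the vertex $u$ has a block of consecutive neighbours on $Q$, and since $d_Q(u)<p$ this block has a boundary: there are consecutive vertices $v_j,v_{j+1}\in N_Q(u)$ with $v_{j-1}\notin N_Q(u)$ (take $v_j$ to be the first vertex of the block). I claim $\{v_{j-1},v_j,v_{j+1},u,w\}$ induces a $bull$: $v_jv_{j+1}u$ is a triangle, $v_{j-1}$ is a pendant at $v_j$ (since $v_{j-1}\not\sim v_{j+1}$, because $Q$ is an induced cycle of length at least $5$, and $v_{j-1}\not\sim u$ by choice), and $w$ is a pendant at $u$ (since $N_Q(w)=\emptyset$); finally $v_{j-1}\not\sim w$. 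This contradicts $bull$-freeness, and the same configuration covers $q(u)=3$ and $q(u)=4$ uniformly.

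In the case $q(u)=1$, Fact~\ref{q1} gives either $N_Q(u)=\{v_i\}$ or $N_Q(u)=\{v_i,v_{i+2}\}$, and in each subcase I would exhibit an induced copy of $E=S_{1,2,2}$ centred at $v_i$, using the edge $uw$ as one length-$2$ leg and two consecutive arcs of $Q$ as the remaining legs. For $N_Q(u)=\{v_i\}$ I take $\{v_{i-1},v_i,v_{i+1},v_{i+2},u,w\}$, with legs $v_{i-1}$, $v_{i+1}v_{i+2}$ and $uw$; for $N_Q(u)=\{v_i,v_{i+2}\}$ I take $\{v_{i-2},v_{i-1},v_i,v_{i+1},u,w\}$, with legs $v_{i+1}$, $v_{i-1}v_{i-2}$ and $uw$. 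All required non-edges hold because $N_Q(w)=\emptyset$, because $N_Q(u)$ is exactly as listed, and because any two chosen $Q$-vertices at cyclic distance $2$ or $3$ are non-adjacent ($Q$ being induced). Each configuration is a forbidden $E$, giving the final contradiction and completing the proof that $Q$ dominates $G$.

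The genuinely delicate, if routine, part is verifying that the displayed five- and six-vertex sets really are induced: every ``off-leg'' pair must be a non-edge, and these distance computations are taken modulo $p$, so they are tightest at $p=5$, where arcs wrap around and one must confirm that the four chosen arc-vertices remain distinct and pairwise correctly non-adjacent. The only conceptual obstacle is the wheel escape $q(u)=p$: with a full hub $u$ every vertex of $Q$ is adjacent to $u$, so one cannot produce a pendant at a triangle vertex (for a $bull$) nor a vertex at distance $2$ from the centre $u$ (for an $E$); hence no forbidden subgraph arises locally, and this possibility must be removed beforehand, which is exactly where the standing assumption that $G$ contains no odd wheel enters. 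I would therefore make that dependence explicit when invoking the Facts.
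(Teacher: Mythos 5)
Your proof is correct and follows essentially the same route as the paper's: locate a vertex $w$ at distance exactly $2$ from $Q$ with intermediate neighbour $u$, then case on $N_Q(u)$ to produce an odd wheel (full hub), a $bull$ (at the boundary of a block of consecutive neighbours), or an induced $E$ centred at a $Q$-neighbour of $u$ with $uw$ as one leg. The only differences are cosmetic: you make the shortest-path reduction to distance $2$ explicit and split the $q(u)=1$ case into the two subcases of Fact~\ref{q1}, where the paper handles it with a single choice of index.
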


\begin{proof}
    Suppose there exists a vertex $v \in V(G)$ for which $\dist(v, Q)=2$. 
    Hence, there exist $v' \notin Q$ and $v_i \in Q$ such that $v'v_i, vv' \in E(G)$.  
    If $v'v_j \in E(G)$ for every $j$, then the set of vertices $\{v', v_1,\ldots, v_p\}$ induces odd wheel.
    If there exists~$k \in [p]$ such that $v'v_k, v'v_{k-1}\in E(G)$ and $ v'v_{k+1} \notin E(G)$, then the set $\{v, v', v_{k-1}, v_k, v_{k+1}\}$ induces $bull$.
           
    If none of these two cases occur, then $q(v')=1$. 
    Since $Q$ is an odd cycle, there exists $j \in [p]$ such that $v'v_{j-1}, v'v_{j+1}, v'v_{j+2}\notin E(G)$ and $ v'v_j \in E(G)$. 
    Then the set of vertices $\{v, v', v_j, v_{j+1}, v_{j+2}, v_{j-1}\}$ induces $E$ (so it also induces $chair$).
\end{proof}
       
\begin{fact}\label{mozkr}\label{f:B-B_noedge} 
    Let $w \in B_i \cup C_i$ and $w'\in B_j \cup C_j$. 
    If $|i-j|\geq 2$, then $ww' \notin E(G)$.
\end{fact}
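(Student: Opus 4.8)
The plan is to assume $ww'\in E(G)$ and derive a contradiction, exploiting that the $Q$-neighbourhood of $w$ is contained in the ``support'' $\{v_i,v_{i+1},v_{i+2}\}$ and that of $w'$ in $\{v_j,v_{j+1},v_{j+2}\}$, with the endpoints $v_i,v_{i+2}$ and $v_j,v_{j+2}$ always present. By the symmetry of the construction under reversing the orientation of $Q$ (which sends $B_i,C_i$ to $B_{-i-2},C_{-i-2}$), I may assume $j=i+d$ with $2\le d\le (p-1)/2$. Since $|i-j|\ge 2$, the two supports either meet in the single vertex $v_{i+2}$ (exactly when $d=2$) or are disjoint (when $d\ge 3$); I treat these two regimes separately.

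In the overlap case $d=2$, the vertex $v_{i+2}$ is a common neighbour of $w$ and $w'$, so $\{w,w',v_{i+2}\}$ is a triangle. I would attach to it the pendant $v_i$ (a neighbour of $w$ but, since $v_i\notin\{v_{i+2},v_{i+3},v_{i+4}\}$, not of $w'$) and the pendant $v_{i+4}$ (a neighbour of $w'$ but not of $w$). For $p\ge 7$ the vertices $v_i$ and $v_{i+4}$ are non-adjacent and miss $v_{i+2}$, so $\{v_i,w,v_{i+2},w',v_{i+4}\}$ induces a $bull$; note this is insensitive to whether $w,w'$ lie in $B$ or $C$, because the possibly-extra neighbours $v_{i+1},v_{i+3}$ are not in the chosen set.

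In the disjoint case $3\le d\le(p-1)/2$, I would use the edge $ww'$ as a shortcut across $Q$. Splicing the path $v_{i+2}-w-w'-v_j$ onto the short arc of $Q$ from $v_{i+2}$ to $v_j$ yields a cycle of length $d+1$, while splicing $v_i-w-w'-v_{j+2}$ onto the complementary arc from $v_{j+2}$ back to $v_i$ yields a cycle of length $p-d+1$. Disjointness of the supports is exactly what makes each of these cycles chordless: the only $Q$-neighbours of $w$ (resp.\ $w'$) lying on it are the chosen endpoints, and the relevant arc is an induced subpath of $Q$ whose ends are non-adjacent. The two lengths sum to $p+2$, which is odd, so precisely one of the two cycles is odd; for $p\ge 7$ that cycle has length between $5$ and $p-1$, i.e.\ it is an induced odd hole strictly shorter than $Q$ — contradicting the minimality of $Q$.

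The step I expect to be the real obstacle is the smallest case $p=5$, where $Q\cong C_5$. Then only $d=2$ occurs and there is no room for a shorter odd hole, so everything must come from the overlap $bull$. That $bull$ does survive whenever at least one of $w,w'$ has type $B$ — one can take $\{v_{i+1},v_{i+2},w,w',v_{i+4}\}$ if $w\in B_i$, or $\{v_i,w,w',v_{i+2},v_{i+3}\}$ if $w'\in B_{i+2}$ — but it breaks down in the single subcase $w\in C_i$ and $w'\in C_{i+2}$. There $w$ and $w'$ jointly dominate the $C_5$, and a direct check shows the resulting $7$-vertex graph is genuinely $(bull,E)$-free (indeed even claw-free); it is, however, not $3$-colourable and carries a (non-induced) spindle, so this configuration belongs to alternative (ii) of the theorem. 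Consequently I would either restrict Fact~\ref{mozkr} to $p\ge 7$ or dispatch the $p=5$ both-$C$ configuration separately (as is done through the extra $C_5$-free hypothesis in Theorem~\ref{thm:bullH}), applying the two-case argument above for $p\ge 7$.
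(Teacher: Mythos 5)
Your argument is correct and essentially coincides with the paper's proof: the same WLOG normalization $2\le j-i\le (p-1)/2$, the same shorter induced odd cycle for $j-i\ge 3$, the same bull $\{v_i,w,v_{i+2},w',v_{i+4}\}$ for $j-i=2$ and $p>5$, and the same split at $p=5$ into a bull (when one vertex lies in $B$) versus the spindle $M_7$ (when both lie in $C$). The caveat you raise about the both-$C$ case at $p=5$ is resolved in the paper exactly as you suggest: the fact is proved under the standing assumption that $G$ contains no spindle, so finding $M_7$ there is the intended contradiction.
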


\begin{proof}
    We can assume without losing generality that $2\leq j-i < p- (j-i)$. 
    Suppose $ww' \in E(G)$. 
    Let us consider possible cases. 

    If $j-i\geq 3$, then one of the sets $\{v_{i+2}, v_{i+3},
    \ldots ,v_{j}, w', w\}$ or $\{v_{j+2}, v_{j+3},\ldots, v_{i}, w, w'\}$ induces smaller odd cycle in $G$. 

    If $j-i=2$ and $p>5$, then the set of vertices $\{v_i, w, v_j, w', v_{j+2}\}$ induces $bull$. 

    If $j-i=2$, $p=5$ and $w' \in B$, then the set of vertices $\{v_i, w, w', v_j, v_{j+1}\}$ induces $bull$. 
    Analogously if $w \in B$. 

    If $j-i =2$, $p=5$ and $w, w' \in C$, then the graph $G$ contains the spindle graph~$M_7$.
\end{proof}

\begin{fact}\label{sasiedC}
    If $C_i \neq \emptyset$, then $C_{i+1}, C_{i-1} = \emptyset$.
\end{fact}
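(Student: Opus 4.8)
The plan is to argue by contradiction, exploiting that in the reduced setting $G$ is both $bull$-free and $K_4$-free. Suppose $C_i\neq\emptyset$ and, aiming at a contradiction, that $C_{i+1}\neq\emptyset$ as well; the case $C_{i-1}\neq\emptyset$ will then follow by the reflection symmetry $v_k\mapsto v_{-k}$ of the hole $Q$. So I fix $w\in C_i$ and $w'\in C_{i+1}$, which means $N_Q(w)=\{v_i,v_{i+1},v_{i+2}\}$ and $N_Q(w')=\{v_{i+1},v_{i+2},v_{i+3}\}$; note $w\neq w'$ since $w$ sees $v_i$ while $w'$ does not, and that both are adjacent to the common pair $v_{i+1},v_{i+2}$.

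The argument then splits on whether $ww'\in E(G)$. The adjacent case is immediate: the four vertices $\{w,w',v_{i+1},v_{i+2}\}$ carry all six edges — the hole edge $v_{i+1}v_{i+2}$, the four edges from $w,w'$ to $v_{i+1},v_{i+2}$, and $ww'$ itself — so they induce $K_4$, contradicting $K_4$-freeness.

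The heart of the matter is the non-adjacent case, and the key is to pick the right five vertices: I claim $\{v_{i-1},v_i,w,v_{i+1},w'\}$ induces a $bull$, with triangle $\{v_i,w,v_{i+1}\}$, a pendant $v_{i-1}$ at $v_i$, and a pendant $w'$ at $v_{i+1}$. The triangle edges and the two pendant edges $v_{i-1}v_i$ and $w'v_{i+1}$ are clear; what must be verified is that exactly the five remaining pairs are non-edges, namely $v_{i-1}w$, $v_{i-1}v_{i+1}$, $v_{i-1}w'$, $v_iw'$, and $ww'$. Each follows directly from the neighbourhoods $N_Q(w)$, $N_Q(w')$ together with $p\geq 5$: $v_{i-1}v_{i+1}$ is a distance-$2$ non-edge on $Q$, the pairs involving $v_{i-1}$ or $v_i$ with $w,w'$ are non-edges because $v_{i-1},v_i\notin\{v_{i+1},v_{i+2},v_{i+3}\}$ and $v_{i-1}\notin\{v_i,v_{i+1},v_{i+2}\}$, and $ww'$ is a non-edge by assumption. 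The bound $p\geq 5$ also guarantees the five vertices are distinct, and handles the $p=5$ wraparound uniformly since $v_{i-1}=v_{i+4}$ still avoids $\{v_{i+1},v_{i+2},v_{i+3}\}$.

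I expect the main obstacle to be locating this configuration rather than the verification. The natural first attempt is to build the $bull$ from the ``diamond'' $\{w,w',v_{i+1},v_{i+2}\}$, but this fails: every hole vertex adjacent to $w$ (namely $v_i$ and $v_{i+2}$) is \emph{also} adjacent to $v_{i+1}$, so none of them can serve as a clean pendant on the triangle. The resolution is to step one position outside the neighbourhoods and use $v_{i-1}$, which is adjacent to $v_i$ only, as the pendant. Once $w'\in C_{i+1}$ is ruled out, the claim $C_{i-1}=\emptyset$ is obtained by the same two cases applied to the mirror set $\{v_{i+3},v_{i+2},w,v_{i+1},w''\}$ for $w''\in C_{i-1}$, completing the proof.
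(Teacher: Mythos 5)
Your proof is correct and follows essentially the same route as the paper: the adjacent case yields the $K_4$ on $\{w,v_{i+1},v_{i+2},w'\}$ and the non-adjacent case yields the $bull$ on $\{v_{i-1},v_i,w,v_{i+1},w'\}$, which are exactly the two configurations used in the paper's proof. Your additional verification of the non-edges and the $p=5$ wraparound is accurate.
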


\begin{proof}
    Suppose that $w \in C_i$ and $w' \in C_{i+1}$. 
    If $ww' \in E(G)$, then the induced graph $G[\{w, v_{i+1}, v_{i+2}, w'\}]$ is complete. 
    If $ww' \notin E(G)$, then the set of vertices $\{v_{i-1}, v_i, w, v_{i+1},$ $ w'\}$ induces $bull$. 
\end{proof}
    
\begin{fact}\label{mozkrs}
    If $w \in B_i\cup C_i$, $w' \in B_{i+1} \cup C_{i+1}$ and $ww' \in E(G)$, then $w\in B^*_i$ or $w' \in B^*_{i+1}$.
\end{fact}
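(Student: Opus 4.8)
The plan is to argue by contradiction. Suppose neither $w\in B_i^*$ nor $w'\in B_{i+1}^*$; I will produce either a $K_4$ or an induced $bull$, contradicting that $G$ is $K_4$-free and $bull$-free (so in fact this statement needs only these two forbidden configurations, not the full $E$-freeness). Unfolding the definitions of $B_i^*$ and $B_{i+1}^*$, the assumption says that each of $w,w'$ either lies in the corresponding $C$-class, or lies in the $B$-class but has a neighbour in its own class $B_i\cup C_i$ (respectively $B_{i+1}\cup C_{i+1}$); call such neighbours $u$ and $u'$. Before splitting cases I would record two constraints: by Fact~\ref{sasiedC} the classes $C_i$ and $C_{i+1}$ cannot both be nonempty, and by Fact~\ref{f:B-B_noedge} any same-class neighbour of $w$ lies in $B_i\cup C_i$ and any same-class neighbour of $w'$ lies in $B_{i+1}\cup C_{i+1}$. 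I also keep the cycle adjacencies at hand: $w$ is joined to $v_i,v_{i+2}$ and, only if $w\in C_i$, also to $v_{i+1}$; symmetrically $w'$ is joined to $v_{i+1},v_{i+3}$ and, only if $w'\in C_{i+1}$, also to $v_{i+2}$.

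The natural split is according to the types of $w$ and $w'$. The case $w\in C_i$ and $w'\in C_{i+1}$ is excluded outright by Fact~\ref{sasiedC}. If $w\in C_i$ and $w'\in B_{i+1}$, then Fact~\ref{sasiedC} forces the same-class neighbour $u'$ of $w'$ into $B_{i+1}$, and I expect a clean dichotomy: if $w\sim u'$ then $v_{i+1},w,w',u'$ are pairwise adjacent and form a $K_4$; if $w\not\sim u'$ then $\{w,w',u',v_{i+3},v_{i+4}\}$ induces a $bull$ (triangle $w'u'v_{i+3}$, with horns $w$ at $w'$ and $v_{i+4}$ at $v_{i+3}$, these horns being non-adjacent because $w$ meets the cycle only in $\{v_i,v_{i+1},v_{i+2}\}$). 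The mirror case $w\in B_i$, $w'\in C_{i+1}$ I would dispatch via the cycle reflection $v_k\mapsto v_{2i+3-k}$, which swaps $v_i\leftrightarrow v_{i+3}$, $v_{i+1}\leftrightarrow v_{i+2}$, interchanges the two index classes, and hence carries this case onto the previous one.

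The remaining and most delicate case is $w\in B_i$, $w'\in B_{i+1}$, where both $u\in B_i\cup C_i$ and $u'\in B_{i+1}\cup C_{i+1}$ are present; here the outcome depends on the three ``cross'' adjacencies $uw'$, $u'w$, $uu'$. If $uw'\notin E(G)$, I expect the triangle $\{u,w,v_i\}$ with horns $w'$ at $w$ and $v_{i-1}$ at $v_i$ to induce a $bull$; by the reflection above, $u'w\notin E(G)$ likewise yields a $bull$. If both $uw'$ and $u'w$ are edges, then either $uu'\in E(G)$, so that $\{u,w,w',u'\}$ is a $K_4$, or $uu'\notin E(G)$, in which case I anticipate that $\{u,w,v_i,u',v_{i-1}\}$ induces a $bull$ (triangle $u w v_i$, horns $u'$ at $w$ and $v_{i-1}$ at $v_i$, non-adjacent since $u'$ meets the cycle only in $\{v_{i+1},v_{i+2},v_{i+3}\}$). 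The main obstacle is exactly this last sub-case $uw',u'w\in E(G)$ but $uu'\notin E(G)$: the four vertices $u,w,w',u'$ then form a diamond wedged tightly against $v_i,\dots,v_{i+3}$, so no $bull$ appears among those eight vertices, and one must reach out to the cycle vertex $v_{i-1}$ (equivalently $v_{i+4}$ on the mirror side) to expose it. The recurring bookkeeping in each construction is to check that the two horns are non-adjacent and hang off distinct triangle vertices; I expect this to go through uniformly for every $p\ge 5$, the wrap-around at $p=5$ (where $v_{i+4}=v_{i-1}$) causing no trouble since the relevant indices remain distinct.
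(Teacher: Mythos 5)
Your proof is correct and takes essentially the same route as the paper's: assume both $w$ and $w'$ fail to lie in the starred sets, extract the witnessing neighbours $u\in B_i\cup C_i$ and $u'\in B_{i+1}\cup C_{i+1}$, and in each configuration exhibit an induced $bull$ or a $K_4$, with Fact~\ref{sasiedC} killing the $C_i$--$C_{i+1}$ case. The only differences are cosmetic choices of which bull to display (e.g.\ in the final subcase the paper hangs the horn $u$ off $w'$ on the $v_{i+3},v_{i+4}$ side where you hang $u'$ off $w$ on the $v_i,v_{i-1}$ side), and all your adjacency checks go through for $p\ge 5$.
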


\begin{proof}
    Assume $ww'\in E(G)$. Let us consider possible cases. 
    
    If $w \in C_i$, then $w' \notin C_{i+1}$ by Fact~\ref{sasiedC}.
    
    If $w\in C_i$ and $w' \in B_{i+1}'$, then there exists $w''\in B_{i+1}'$ such that $w'w'' \in E(G)$ and either the set of vertices $\{v_{i-1}, v_i, w, v_{i+1}, w''\}$ induces $bull$ (if $ww'' \notin E(G)$) or the set~$\{w, v_{i+1}, w', w''\}$ induces $K_4$ (otherwise). 

    If $w \in B_i'$ and $w' \in B_{i+1}'$, then there exist $w'' \in B_i' \cup B_{i}$ and $w''' \in B_{i+1}' \cup C_{i+1}$ such that $ww'', w'w''' \in E(G)$. Then the set of vertices $\{v_{i-1}, v_i, w', w, w''\}$ induces $bull$ (if $w'w'' \notin E(G)$), or the set $\{v_{i+4}, v_{i+3}, w''', w', w\}$ induces $bull$ (if $ww''' \notin E(G)$), or the set  $\{v_{i+4}, v_{i+3}, w''', w', w''\}$ induces $bull$ (if $w'w'' \in E(G)$, but $w''w''' \notin E(G)$), or the set $\{w, w', w'', w'''\}$ induces $K_4$ (otherwise).
\end{proof}

%%%%%%%%%%%%%%%%%%%%%%%%%%%%%%%%%%%%%%%%%%%%%%%%%%%
%%%%%%%%%%%%%%%%%%%%%%%%%%%%%%%%%%%%%%%%%%%%%%%%%%%
%%%%%%%%%%%%%%%%%%%%%%%%%%%%%%%%%%%%%%%%%%%%%%%%%%%
%%%%%%%%%%%%%%%%%%%%%%%%%%%%%%%%%%%%%%%%%%%%%%%%%%%
%%%%%%%%%%%%%%%%%%%%%%%%%%%%%%%%%%%%%%%%%%%%%%%%%%%
%%%%%%%%%%%%%%%%%%%%%%%%%%%%%%%%%%%%%%%%%%%%%%%%%%%

\section{Proof of Theorem \ref{thm:bullchair}}\label{sec:proof_bull_chair}

Note that if $G$ is a $(bull, chair)$-free graph, then the sets $A$ and $B$ are empty. It is true, because if $q(w)=1$, then (since $Q$ is an odd cycle) there exists $i \in [p]$ such that $wv_{i-1}, wv_{i+1}, wv_{i+2} \notin E(G)$ and $wv_i \in E(G)$. Then the set of vertices $\{v_{i-1}, v_i, v_{i+1}, v_{i+2}, w\}$ induces $chair$. 

Moreover, we can point out that for every $i\in [p]$ we have $|C_i|\leq 1$. It is true, because if we have two distinct vertices $w, w' \in C_i$, then either the graph $G[\{v_i, v_{i+1}, w, w'\}]$ is complete (if $ww' \in E(G)$) or the set of vertices $\{v_{i-2}, v_{i-1}, v_i, w, w'\}$ induces $chair$ (if $ww' \notin E(G)$). 

Consider the case with $p=5$. We know that $V(G)=Q \cup C \cup D$. Our assumption is that $\delta(G)\geq 3$, so every vertex from $Q$ must have at least one neighbour in the set~$C\cup D$. Since $p=5$, the graph $G$ always contains $M_7$.

Now, we will describe the structure of the graph $G$ for $p>5$. By Fact \ref{values_q} we have $V(G) = C\cup Q$. Moreover, by Fact \ref{sasiedC}, $|N_Q(w) \cap N_Q(w')| \leq 1$ for every $w, w' \in C$. Then $|C| \leq \frac{p-1}{2}$. Let us recall that $C$ is an isolated set of vertices (by Fact \ref{mozkr}).

This graph is either $3$-colourable or contains the spindle graph. If $|C|=\frac{p-1}{2}$, then it is easy to see that $G$ is the spindle graph of order $\frac{3(p-1)}{2}+1$. Otherwise, there either exists vertex $v \in Q$ such that $d_C(v) = 0$ or there exist two vertices $v_i, v_j \in Q$ such that $C_i, C_{j-2} \neq \emptyset, C_{i-2}, C_{j} = \emptyset$ and $j=i+2k$, where $0<k< \frac{p-1}{2}$. The first case is impossible due to our assumption that $\delta(G)>2$. In second case we colour vertices $v_i, v_{i+1}, \ldots, v_j$ alternately with colours blue and red (starting with blue), and the rest of~$Q$ alternately with colours red and green. Finally, we colour vertices from $C$ with remaining colours. Note that for every vertex $w \in C$ we have at least one free colour - the colouring method provides us that for every $i$ such that $C_i\neq \emptyset$, vertices $v_i$ and $v_{i+2}$ get the same colour. Therefore, the obtained colouring is proper.

%%%%%%%%%%%%%%%%%%%%%%%%%%%%%%%%%%%%%%%%%%%%%%%%%%%
%%%%%%%%%%%%%%%%%%%%%%%%%%%%%%%%%%%%%%%%%%%%%%%%%%%
%%%%%%%%%%%%%%%%%%%%%%%%%%%%%%%%%%%%%%%%%%%%%%%%%%%
%%%%%%%%%%%%%%%%%%%%%%%%%%%%%%%%%%%%%%%%%%%%%%%%%%%
%%%%%%%%%%%%%%%%%%%%%%%%%%%%%%%%%%%%%%%%%%%%%%%%%%%
%%%%%%%%%%%%%%%%%%%%%%%%%%%%%%%%%%%%%%%%%%%%%%%%%%%

\section{Proof of Theorem \ref{thm:bullE}}\label{sec:proof_bull_E}

The proof of Theorem \ref{thm:bullE} will be split into two cases. 

\subsection{Case $p>5$}

Notice that in this case, if $w\in A_i$, then the set of vertices $\{v_{i-2}, v_{i-1}, v_i, v_{i+1}, v_{i+2}, w\}$ induces~$E$. Then (because $D=\emptyset$ due to Fact \ref{values_q}), we have $V(G)=Q \cup B \cup C$. 

Let us recall that due to Fact \ref{mozkr} edges $ww'$ non-incident to the cycle $Q$ can exist only for $w\in B_i \cup C_i$ and $w' \in B_i \cup C_i \cup B_{i+1} \cup C_{i+1}$.

To shorten our considerations, we will call an $ww'$ a ``1-type edge" if $w, w' \in B_i \cup C_i$, and a ``2-type edge" if $w \in B_i \cup C_i, w' \in B_{i+1} \cup C_{i+1}$. Of course, every edge non-incident to $Q$ is either 1-type or 2-type. Fact \ref{mozkrs} tells us that if $ww'$ is an 2-type edge, then $w \in B^*$ or $w' \in B^*$. It is obvious, that no 
1-type edge is incident to the set~$B^*$. 
Therefore, the graph 
$G' = G[V(G)\setminus (B^*)]$ does not contain any 2-type edge and does contain all 1-type edges.

Note that if there exists a proper $3$-colouring $c'$ of the graph $G'$, then there also exists a~proper colouring $c$ of the graph $G$. 

Assume that $c'$ is a proper $3$-colouring of the graph $G'$. Let us precolour with $c'$ all vertices outside the set $B^*$. By Fact \ref{mozkr}, every neighbour of non-precoloured vertex $w \in B_i^*$ is  $v_i$, or $v_{i+2}$, or belongs to the set $C_{i-1}\cup B_{i-1} \cup C_{i+1}\cup B_{i+1}$. That means every neighbour of $w$ is also incident to the vertex $v_{i+1}$. Thus, the vertex $w$ can get the colour $c'(v_{i+1})$.   

How can we decide whether a proper $3$-colouring of $G'$ exists or not?  We want to show that~$c'$ exists if and only if there exists a proper colouring $c''$ of the cycle $Q$ satisfying the following property:
\begin{equation}\label{prop}
    \forall i \in [p]:  C_i \cup B_i' \neq \emptyset \Rightarrow c''(v_i)=c''(v_{i+2}).
\end{equation}

Of course, if $C_i \cup B_i' \neq \emptyset$, then any proper colouring must assign the same colour to the vertices $v_i$ and $v_{i+2}$. The inverse implication is true due to the fact that the graph $G'$ does not contain any 
$2$-type edge and to the following observation.
       
\begin{fact}\label{bip}
    The graph $G[B_i \cup C_i]$ is bipartite for any $i$.
\end{fact}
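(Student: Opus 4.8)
The plan is to show that $G[B_i \cup C_i]$ contains no odd cycle, which characterises bipartiteness. The key structural tool is that all vertices in $B_i \cup C_i$ share a common pair of ``anchor'' vertices on $Q$: every vertex of $B_i$ is adjacent to $v_i$ and $v_{i+2}$, and every vertex of $C_i$ is adjacent to $v_i$, $v_{i+1}$ and $v_{i+2}$. In particular $v_i$ and $v_{i+2}$ are each adjacent to \emph{all} of $B_i \cup C_i$. I would exploit this heavily: any edge $ww'$ inside $B_i \cup C_i$ together with $v_i$ (or $v_{i+2}$) forms a triangle, and longer cycles inside the set will be forced to create a \emph{bull} or an odd hole shorter than $Q$ when combined with these anchors.

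First I would suppose, for contradiction, that $G[B_i \cup C_i]$ contains an induced odd cycle; it suffices to rule out a shortest such cycle. The shortest odd cycle cannot be a triangle $w w' w''$: all three of $w, w', w''$ are adjacent to $v_i$, so $\{v_i, w, w', w''\}$ would induce a $K_4$, contradicting $K_4$-freeness (recall the reductions let us assume $G$ is $K_4$-free). So the shortest odd cycle $w_1 w_2 \ldots w_{2t+1} w_1$ has length at least $5$. Now consider the anchor $v_i$, which is adjacent to every $w_k$. Then $\{v_i, w_1, w_2, \ldots, w_{2t+1}\}$ induces an odd wheel $W_{2t+1}$ with $t \geq 2$, giving outcome (i) of the theorem. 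The cleaner route, however, is to observe that $v_i$ together with any three consecutive vertices of the cycle plus one non-adjacent cycle vertex can be arranged to induce a \emph{bull}: take $w_1 w_2$ an edge of the cycle and $w_4$ a non-neighbour of both (which exists since the cycle is induced and has length $\geq 5$), then $\{w_1, w_2, v_i, w_4, \ldots\}$ must be checked against the bull pattern.

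The main obstacle I expect is handling the interaction between $B_i$ and $C_i$ vertices carefully, since $C_i$ vertices are additionally adjacent to $v_{i+1}$ while $B_i$ vertices are not. An edge between two $C_i$ vertices $w, w'$ already yields a $K_4$ on $\{v_i, v_{i+1}, w, w'\}$, so $C_i$ is an independent set and in fact $|C_i|$ is tightly constrained. Thus the only nontrivial edges inside the set run $B_i$--$B_i$ or $B_i$--$C_i$. The remaining work is to verify that no odd cycle can weave through these, which I would do by the shortest-odd-cycle argument above: since every vertex of the set is adjacent to the common anchor $v_i$, any induced odd cycle of length $2t+1$ produces an odd wheel $W_{2t+1}$, which is outcome (i) and hence excluded in the present case-analysis, or—if one prefers a self-contained bipartiteness statement independent of outcome (i)—one argues directly that a chordless cycle of length $\geq 4$ in $G[B_i \cup C_i]$ cannot coexist with the universal anchor $v_i$ without creating a forbidden \emph{bull} or contradicting the minimality of the odd hole $Q$. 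Closing this last case cleanly, rather than the overall skeleton, is where the real care is needed.
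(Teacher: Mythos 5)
Your primary argument is exactly the paper's proof: every vertex of $B_i \cup C_i$ is adjacent to the anchor $v_i$, so an induced odd cycle $w_1\ldots w_s$ in $G[B_i\cup C_i]$ together with $v_i$ induces $K_4$ (when $s=3$) or an odd wheel (when $s\geq 5$), both of which are excluded outcomes. The extra detours you sketch (the bull-based variant, the independence of $C_i$) are unnecessary but harmless; the core argument is correct and identical to the paper's.
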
          
\begin{proof}
    Suppose $G[B_i \cup C_i]$ contains an odd cycle. Then it contains the induced odd cycle~$w_1w_2 \ldots w_s$, where $s\geq 3$, and the set of vertices $\{v_i, w_1, w_2, \ldots, w_s w_1\}$ induces either~$K_4$ or an odd wheel. 
\end{proof}  

Thus, having $c''$, we can construct $c'$ in a very simple way, assigning every vertex the first available colour. 

To find the colouring $c''$ satisfying the property (\ref{prop}), we proceed according to the following algorithm:
\begin{enumerate}
    \item Colour vertex $v_1$ with red. 
    \item Colour with red all those vertices whose colouring is enforced by the property (\ref{prop}).
    \item If there occurres a colour conflict, stop. The graph $G$ contains the spindle graph. 
    \item Let $k$ be an index such that $v_{k-2}$ is non-coloured and $v_k$ is red. Colour $v_{k-1}$ with green.
    \item Colour with green all those vertices whose colouring is enforced by the property (\ref{prop}).
    \item If there there occurred colour conflict, stop. The graph $G$ contains the spindle graph. 
    \item Colour vertex $v_{k-2}$ with blue. 
    \item Colour with blue every non-coloured vertex $v_{k-2l}$, where $l\in \N$.
    \item Colour with red all the remaining vertices. 
\end{enumerate}
\noindent Steps 8 and 9 are possible, since $Q$ is an odd cycle. Using this procedure we obtain a~proper colouring of the cycle $Q$. 

\subsection{Case $p=5$}

Since $G$ is $(bull,E)$-free graph, and $C_5$ is a dominating cycle (by Lemma \ref{lem:Qdominat}), it follows that $V(G) = Q \cup A \cup B \cup C \cup D$.
Assuming $G$ does not contain $W_5$, $K_4$ and $M_7$, we will prove a number of properties of these subsets.

\begin{fact}\label{f:p5_bipartite}
    The graphs $G[C_i \cup B_i]$ and $G[A_i]$ are bipartite for any $i$. 
\end{fact}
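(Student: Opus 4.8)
The plan is to prove that $G[C_i \cup B_i]$ and $G[A_i]$ are bipartite by the same strategy already used in Fact~\ref{bip}: assume one of these induced subgraphs contains an odd cycle, extract a \emph{shortest} (hence induced) odd cycle from it, and then show that adding a suitable vertex of the hole $Q$ forces one of the forbidden configurations $K_4$, $W_5$, or $M_7$, contradicting the standing assumption of this subsection that $G$ contains none of these. The key point in each case is that all vertices of the relevant set share a common neighbour on $Q$.

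First I would handle $G[A_i]$. By definition every vertex of $A_i$ has $N_Q(v) = \{v_i\}$, so $v_i$ is adjacent to all of $A_i$. Suppose $G[A_i]$ contains an odd cycle; take a shortest one, which is an induced odd cycle $w_1 w_2 \ldots w_s$ with $s \geq 3$ odd. Since $v_i$ is joined to every $w_t$, the set $\{v_i, w_1, \ldots, w_s\}$ induces $v_i$ together with an induced odd cycle to which $v_i$ is completely adjacent, i.e.\ an odd wheel $W_s$; if $s=3$ this is simply $K_4$. Either outcome contradicts the assumption that $G$ contains no $K_4$ and no odd wheel (in the $p=5$ case the relevant odd wheel forbidden is $W_5$, but a $W_s$ with $s>3$ already yields $K_4$ on three consecutive spokes, so the contradiction still holds). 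Hence $G[A_i]$ is bipartite.

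Next I would treat $G[C_i \cup B_i]$. Here the common neighbour is $v_{i+1}$: every vertex of $C_i$ has $v_{i+1}$ as a neighbour, and every vertex of $B_i$ has $N_Q = \{v_i, v_{i+2}\}$, so it is \emph{not} adjacent to $v_{i+1}$ --- so unlike the $A_i$ case there is no single vertex of $Q$ dominating the whole set. This is the main obstacle: the completely-adjacent-apex argument of Fact~\ref{bip} works verbatim only for $G[C_i]$, where $v_{i+1}$ is a universal apex. For the mixed set $B_i \cup C_i$ I expect to argue that a shortest odd cycle $w_1 \ldots w_s$ cannot actually contain vertices from both $B_i$ and $C_i$ in a way that survives, or to find, for each vertex of the cycle, a common dominating structure: every vertex of $B_i \cup C_i$ is adjacent to both $v_i$ and $v_{i+2}$ (a $C_i$-vertex is adjacent to $v_i, v_{i+1}, v_{i+2}$, and a $B_i$-vertex to $v_i, v_{i+2}$), so the pair $\{v_i, v_{i+2}\}$ is jointly adjacent to the entire set. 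One then checks that an induced odd cycle in $B_i \cup C_i$ together with $v_i$, or together with $v_{i+2}$, produces either $K_4$ or an odd wheel exactly as before, using that $v_i$ (resp.\ $v_{i+2}$) is completely joined to the cycle.

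The step I expect to require the most care is confirming that the shortest odd cycle extracted from $G[B_i \cup C_i]$ is genuinely induced and that the apex vertex $v_i$ is adjacent to \emph{all} its vertices (which holds since both $B_i$ and $C_i$ vertices are adjacent to $v_i$), so that $\{v_i\} \cup \{w_1,\dots,w_s\}$ really induces $W_s$ with no extra chords from $Q$; the edges $v_i v_{i+1}$ and $v_i v_{i+2}$ of $Q$ do not interfere because $v_{i+1}, v_{i+2} \notin \{w_1,\dots,w_s\}$. Once that is secured, the contradiction with $K_4$-, $W_5$-, and $M_7$-freeness is immediate and identical in spirit to Fact~\ref{bip}, so no separate $M_7$ analysis should be needed here --- $M_7$ is listed among the excluded graphs only because it is the relevant obstruction elsewhere in the $p=5$ analysis.
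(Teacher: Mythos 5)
Your proposal is correct and is essentially the paper's own argument: the paper proves this fact by the same reduction to Fact~\ref{bip}, namely taking a shortest (hence induced) odd cycle $w_1\ldots w_s$ in $G[B_i\cup C_i]$ (resp.\ $G[A_i]$) and adjoining the common neighbour $v_i$ to obtain $K_4$ (if $s=3$) or an induced odd wheel, contradicting the standing assumptions. The only slip is your parenthetical claim that $W_s$ with $s>3$ ``yields $K_4$ on three consecutive spokes'' --- it does not, since consecutive rim vertices at distance two are non-adjacent on an induced cycle --- but this is harmless because an odd wheel is itself one of the excluded outcomes, so the contradiction stands without it.
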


\begin{proof}
   The proof is analogous to the proof of Fact \ref{bip}.
\end{proof}

\begin{fact}\label{f:A-AB_edges}%\label{pojpoj}
    If $w \in A_i$ and $w'\in A_{i+1}\cup A_{i-1} \cup B_{i-1}$, then $ww' \in E(G)$. 
\end{fact}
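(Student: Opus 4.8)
The plan is to argue by contradiction: I would assume $ww' \notin E(G)$ and exhibit an induced copy of $E = S_{1,2,2}$, which is forbidden. A useful preliminary observation is that in each of the three situations the six vertices I will use span no triangle — the only possible edge among the non-cycle vertices is $ww'$, which we are assuming absent — so a \emph{bull} cannot occur and an induced $E$ is the right target in every case. Throughout I rely on $p = 5$, so that indices are read modulo $5$; this is exactly what forces two cycle vertices at distance $2$ or $3$ on $Q$ to be non-adjacent, which is what keeps the witness subgraphs induced.

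Concretely, I would split into the three types of $w'$ and give an explicit six-vertex witness in each, always naming the centre of the $S_{1,2,2}$ and its three branches of lengths $1,2,2$. If $w' \in A_{i+1}$, I would centre at $v_i$ and take $\{v_i, w, v_{i+1}, w', v_{i-1}, v_{i-2}\}$, with branches $v_i w$ (length $1$), $v_i v_{i+1} w'$ (length $2$) and $v_i v_{i-1} v_{i-2}$ (length $2$). The case $w' \in A_{i-1}$ is the mirror image across $Q$: centre at $v_i$ and take $\{v_i, w, v_{i-1}, w', v_{i+1}, v_{i+2}\}$. If $w' \in B_{i-1}$, so that $N_Q(w') = \{v_{i-1}, v_{i+1}\}$, I would instead centre at $v_{i+1}$ and take $\{v_{i+1}, w', v_i, w, v_{i+2}, v_{i+3}\}$, with branches $v_{i+1} w'$ (length $1$, obtained by deliberately \emph{omitting} $v_{i-1}$ so that $w'$ becomes a leaf), $v_{i+1} v_i w$ (length $2$) and $v_{i+1} v_{i+2} v_{i+3}$ (length $2$).

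The verification is then routine in each case: the branch edges are present by the membership of $w$ and $w'$ in the respective sets together with the cycle structure, while every potential chord is excluded either by the definitions of $A_i$ and $B_{i-1}$ (which fix $N_Q(w)$ and $N_Q(w')$ exactly), by the assumed non-edge $ww'$, or by distances on $C_5$. I expect the $B_{i-1}$ case to be the one needing the most care, and this is where the main obstacle lies: because $w'$ is adjacent to \emph{both} cycle vertices $v_{i-1}$ and $v_{i+1}$ flanking $v_i$, one cannot retain both of them in the witness without introducing a chord, so the correct move is to discard $v_{i-1}$ and recentre at $v_{i+1}$. Confirming that $v_i v_{i+2}$, $v_i v_{i+3}$ and $v_{i+1} v_{i+3}$ are all non-edges — precisely the point at which $p = 5$ is used — is the crux that makes the $S_{1,2,2}$ induced and closes the contradiction.
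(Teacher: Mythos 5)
Your proposal is correct and follows essentially the same route as the paper: assume $ww'\notin E(G)$ and exhibit an induced $E=S_{1,2,2}$, using the fact that $N_Q(w)$ and $N_Q(w')$ are pinned down exactly and that $p=5$ keeps the witness induced. The paper in fact uses the single set $\{w, v_i, v_{i+1}, v_{i+2}, v_{i+3}, w'\}$ (centred at $v_{i+1}$) for both the $A_{i+1}$ and $B_{i-1}$ cases, which coincides with your $B_{i-1}$ witness; your $A_{i+1}$ witness differs only in replacing $v_{i+2},v_{i+3}$ by $v_{i-1},v_{i-2}$, an inessential variation.
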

\begin{proof}
    Suppose $w' \in A_{i+1}\cup B_{i-1}$ and $ww' \notin E(G)$. Then the set of vertices $\{w, v_i, v_{i+1},$ $v_{i+2},$ $v_{i+3},$ $ w'\}$ induces~$E$. Analogously for $A_{i-1}$. 
\end{proof}

\begin{fact}\label{f:A-ABC_noedge}
    If $w\in A_i$ and $w'\in A_{i+2}\cup A_{i+3}\cup B_i\cup B_{i+3}\cup C_i \cup C_{i+1} \cup C_{i+2} \cup C_{i+3}$, then $ww' \notin E(G)$. 
 \end{fact}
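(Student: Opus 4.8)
The plan is to prove each non-adjacency in Fact~\ref{f:A-ABC_noedge} by assuming the contrary edge $ww'$ exists and exhibiting a forbidden induced subgraph — in every case either an induced $E$ (yielding a contradiction with $(bull,E)$-freeness) or one of the configurations $K_4$, $W_5$, $M_7$ that we have assumed $G$ avoids. Since $p=5$, all index arithmetic is modulo $5$, so I would first note the cyclic symmetry: the sets $A_i, B_i, C_i$ rotate consistently, which lets me treat the members of the target set $A_{i+2}, A_{i+3}, B_i, B_{i+3}, C_i, C_{i+1}, C_{i+2}, C_{i+3}$ by grouping them according to how $N_Q(w')$ sits relative to $N_Q(w)=\{v_i\}$.

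First I would handle the $A$-to-$A$ cases. If $w\in A_i$ and $w'\in A_{i+2}$ with $ww'\in E(G)$, then $w'$ attaches only at $v_{i+2}$ while $w$ attaches only at $v_i$; I expect the set $\{v_i, w, w', v_{i+2}, v_{i+3}\}$ (or a similarly chosen window of four cycle vertices plus the two external vertices) to induce a $bull$ or an $E$, using that $v_i v_{i+1}, v_{i+1}v_{i+2}$ are cycle edges but $w, w'$ miss the intermediate vertices. The case $A_{i+3}$ is the mirror image of $A_{i+2}$ under reflection of the $5$-cycle (since $i+3\equiv i-2$), so it follows by symmetry. Next, for $w'\in B_i\cup B_{i+3}$, the neighbour $w'$ has exactly two cycle-neighbours at distance two; here I would again select the five or six relevant vertices so that the unique triangle $v_a v_{a+1} w'$ together with two pendant paths through $w$ and a cycle vertex realises a $bull$, or — when six vertices are genuinely needed to get an induced path structure — an $E$.

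The bulk of the work is the $C$-cases, $w'\in C_i\cup C_{i+1}\cup C_{i+2}\cup C_{i+3}$, where $w'$ has three consecutive cycle-neighbours. These split by the overlap of $\{v_{i'}, v_{i'+1}, v_{i'+2}\}$ with $v_i$: when $v_i\in N_Q(w')$ (the cases $C_i, C_{i-2}=C_{i+3}, C_{i-1}=C_{i+4}$ reduce to these after the mod-$5$ reduction) one gets a dense neighbourhood around a common cycle vertex, so an edge $ww'$ should force a $K_4$ on $\{v_i, w, w', v_{i+1}\}$ or an odd wheel / $M_7$; when $v_i\notin N_Q(w')$ the external vertex $w$ dangles off the triangle carried by $w'$, which I expect to create a $bull$ or $E$. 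I would verify, using Fact~\ref{f:B-B_noedge} and Fact~\ref{sasiedC} where the two external vertices interact, that no previously excluded adjacency interferes with the chosen induced set.

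The main obstacle I anticipate is bookkeeping rather than depth: with $p=5$ the indices wrap tightly, so sets like $C_{i+3}$ and $C_{i-2}$ coincide and one must be careful that a vertex claimed to be non-adjacent to a chosen cycle vertex really is — i.e.\ that the five-element neighbourhood $Q$ forces the missing edges that make the exhibited subgraph \emph{induced} rather than merely a subgraph. I would therefore, for each subcase, explicitly list $N_Q(w)$ and $N_Q(w')$, confirm which chords are present, and only then name the forbidden configuration; the symmetry reductions (reflection $i\mapsto -i$ and the correspondence $i+3\equiv i-2\pmod 5$) should cut the roughly eight subcases down to three or four genuinely distinct pictures.
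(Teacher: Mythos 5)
Your overall strategy coincides with the paper's: assume $ww'\in E(G)$ and exhibit a forbidden induced subgraph in each subcase. However, several of the specific certificates you name would fail, and these are exactly the places where the bookkeeping matters. First, for $w'\in C_i$ (and more generally whenever $v_i\in N_Q(w')$) you propose a $K_4$ on $\{v_i,w,w',v_{i+1}\}$; this is not a $K_4$, because $w\in A_i$ means $N_Q(w)=\{v_i\}$, so the edge $wv_{i+1}$ is absent and the set only induces a diamond, which is not forbidden here. The paper's certificate for $w'\in B_i\cup C_i$ is instead the bull on $\{v_{i-1},v_i,w,w',v_{i+2}\}$, whose triangle is $\{v_i,w,w'\}$ (through the common neighbour $v_i$ and the assumed edge $ww'$), not a triangle of the form $v_av_{a+1}w'$ — note that for $w'\in B_i$ no such triangle even exists, since the two $Q$-neighbours of $w'$ are non-adjacent. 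Second, for $w'\in A_{i+2}$ your candidate set $\{v_i,w,w',v_{i+2},v_{i+3}\}$ induces the path $v_i\,w\,w'\,v_{i+2}\,v_{i+3}$, i.e.\ a $P_5$, which is not forbidden in a $(bull,E)$-free graph; here a six-vertex $E$ is genuinely required, namely $\{v_{i+3},v_{i+4},v_i,v_{i+1},w,w'\}$ with centre $v_i$ and arms $v_{i+1}$, $v_{i+4}v_{i+3}$ and $ww'$.

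A smaller but telling slip: you group $C_{i-1}=C_{i+4}$ with the cases to be excluded, but that set is deliberately absent from the statement — by Fact~\ref{f:C-AB_edges} the edge between $A_i$ and $C_{i-1}$ is \emph{forced} to exist, so no non-adjacency proof is possible there. For $w'\in C_{i+1}\cup C_{i+2}$ (where $v_i\notin N_Q(w')$) your picture is correct and matches the paper: $w$ dangles off the triangle $w'v_{i+2}v_{i+3}$ and one obtains a bull such as $\{w,w',v_{i+2},v_{i+3},v_{i+4}\}$. So the plan is salvageable, but as written the $K_4$ step and the five-vertex certificate for the $A$-to-$A$ case would not go through and must be replaced by the bull and the six-vertex $E$ described above.
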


\begin{proof}
    Suppose $ww' \in E(G)$.
    
    If $w' \in A_{i+2}$, then the set of vertices $\{v_{i+3}, v_{i+4}, v_i,$ $v_{i+1}, w, w'\}$ induces~$E$. Analogously for $A_{i+3}$. 

    If $w' \in B_i \cup C_i$, then the set of vertices $\{v_{i-1}, v_i, w, w', v_{i+2}\}$ induces~$bull$. Analogously for $B_{i+3} \cup C_{i+3}$.

    If $w' \in C_{i+1}$, then the set of vertices $\{ w, w', v_{i+2}, v_{i+3}, v_{i+4} \}$ induces~$bull$. Analogously for $C_{i+2}$.
\end{proof}

\begin{fact}\label{f:A'B'_max2}
    There are at most two $i,j$ such that $A_i'$ and $A_j'$ are nonempty. Moreover $|i-j|>1$.
\end{fact}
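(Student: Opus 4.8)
The plan is to prove the two assertions separately: first that no two indices carrying a nonempty $A'$ can be \emph{consecutive} on $Q$ (this is exactly the condition $|i-j|>1$), and then that, because $Q=C_5$, non-consecutiveness forces there to be at most two such indices. Throughout I would use the elementary reformulation that $A_i'\neq\emptyset$ is equivalent to $G[A_i]$ containing an edge, i.e.\ to the existence of two distinct vertices of $A_i$ joined by an edge.

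For the non-consecutiveness, I would argue by contradiction. Suppose $A_i'\neq\emptyset$ and $A_{i+1}'\neq\emptyset$ for consecutive indices, and pick an edge $xy$ with $x,y\in A_i$ and an edge $zt$ with $z,t\in A_{i+1}$. Because vertices of $A_i$ have unique $Q$-neighbour $v_i$ while those of $A_{i+1}$ have unique $Q$-neighbour $v_{i+1}$, we have $A_i\cap A_{i+1}=\emptyset$, so $x,y,z,t$ are four distinct vertices. Fact~\ref{f:A-AB_edges} tells us that every vertex of $A_i$ is adjacent to every vertex of $A_{i+1}$, so all four cross-edges $xz,xt,yz,yt$ are present; combined with the edges $xy$ and $zt$ this makes $\{x,y,z,t\}$ induce a $K_4$. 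This contradicts the standing assumption of this subsection that $G$ contains no $K_4$. Hence any two indices with nonempty $A'$ are non-adjacent on $Q$, which is precisely the claim $|i-j|>1$.

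For the bound on the number of such indices, I would view the indices $i$ with $A_i'\neq\emptyset$ as vertices of the $5$-cycle $Q=v_1v_2v_3v_4v_5$. By the previous paragraph they are pairwise non-consecutive, i.e.\ they form an independent set in $C_5$. Since $\alpha(C_5)=2$, at most two indices can occur, yielding the ``at most two'' statement and completing the argument.

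I do not expect a genuine obstacle here: the whole fact reduces to the forced $K_4$ among two interior edges over adjacent $A_i,A_{i+1}$ plus the independence number of $C_5$. The only point requiring a little care is verifying in the second paragraph that $x,y,z,t$ really are four \emph{distinct} vertices inducing a clique, which is guaranteed by $A_i\cap A_{i+1}=\emptyset$ together with the complete join supplied by Fact~\ref{f:A-AB_edges}.
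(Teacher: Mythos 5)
Your proposal is correct and follows essentially the same route as the paper: two adjacent vertices in $A_i'$ and two adjacent vertices in $A_{i+1}'$ induce a $K_4$ via Fact~\ref{f:A-AB_edges}, forcing $|i-j|>1$, and then the five-vertex cycle admits at most two pairwise non-consecutive such indices. Your explicit check that the four vertices are distinct is a small point of care the paper leaves implicit, but the argument is the same.
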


\begin{proof}
    Suppose $w, w' \in A_i'$ and $u,u' \in A_{i+1}'$, where $ww', uu' \in E(G)$. Then by Fact~\ref{f:A-AB_edges} the set of vertices $\{w, w', u, u'\}$ induces $K_4$. Thus $|j-i|>1$. Since $Q$ consists of five vertices, the conclusion holds.
\end{proof}

\begin{fact}\label{f:A-AB_edge_or_no}
    If $w\in A_i$, $w' \in A_{i+1}$ and $w'' \in B_{i+2}$, then $ww'' \notin E(G)$ or $w'w'' \notin E(G)$.
    
\end{fact}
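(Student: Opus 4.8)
The plan is to argue by contradiction. Suppose both $ww'' \in E(G)$ and $w'w'' \in E(G)$. The first step is to record that $ww' \in E(G)$: since $w \in A_i$ and $w' \in A_{i+1}$, this is immediate from Fact~\ref{f:A-AB_edges}. Consequently $\{w, w', w''\}$ induces a triangle in $G$, and I would use this triangle as the body of a bull.

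Next I would attach two pendant vertices taken from the hole $Q$. The natural choice on $w$ is its unique hole-neighbour $v_i$, and on $w''$ the choice is $v_{i+2}$. The only verification needed is that these two vertices attach to exactly one triangle vertex each and are non-adjacent to one another. Reading off the hole-neighbourhoods $N_Q(w)=\{v_i\}$, $N_Q(w')=\{v_{i+1}\}$ and $N_Q(w'')=\{v_{i+2}, v_{i+4}\}$ shows that, among $\{w, w', w''\}$, the vertex $v_i$ is joined only to $w$ and $v_{i+2}$ only to $w''$; and since $p=5$, the vertices $v_i$ and $v_{i+2}$ lie at distance $2$ on $Q$, hence are non-adjacent. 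Therefore $\{v_i, w, w', w'', v_{i+2}\}$ induces exactly a bull (the triangle $w w' w''$ with pendant $v_i$ on $w$ and pendant $v_{i+2}$ on $w''$), contradicting that $G$ is $bull$-free.

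The statement is genuinely short, so there is no substantial obstacle; the single point requiring care is the choice between the two hole-neighbours of $w''$. One must pick $v_{i+2}$ rather than $v_{i+4}$: because $p=5$, we have $v_{i+4}=v_{i-1}$, which is adjacent to $v_i$ on the cycle, and using it would join the two horns of the would-be bull and spoil the argument; by contrast $v_{i+2}$ sits at distance $2$ from $v_i$ and is safe. With this choice fixed, the verification of the remaining non-edges is routine.
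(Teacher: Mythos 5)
Your proof is correct and matches the paper's argument exactly: assume both edges exist, invoke Fact~\ref{f:A-AB_edges} to get $ww' \in E(G)$, and observe that $\{v_i, w, w', w'', v_{i+2}\}$ induces a bull. Your additional verification of the pendant attachments (and the remark on why $v_{i+2}$ rather than $v_{i+4}$ is the right choice) is just a more detailed writeup of the same argument.
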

\begin{proof}
    Suppose $ww'', w'w'' \in E(G)$. By Fact \ref{f:A-AB_edges} $ww' \in E(G)$. Then  the set of vertices $\{v_i, w, w', w'', v_{i+2}\}$ induces $bull$.
\end{proof}

\begin{fact}\label{f:C-AB_edges}%\label{pojpotr}
    If $w\in C_i$ and $w' \in A_{i+1} \cup B_{i-1}\cup B_{i+1}$, then $ww' \in E(G)$.
\end{fact}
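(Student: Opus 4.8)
\section*{Proof proposal}

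The plan is to argue by contradiction from $bull$-freeness alone: I will assume $ww' \notin E(G)$ and exhibit an induced $bull$ on $w$, $w'$, and three consecutive vertices of $Q$, contradicting the standing hypothesis. The guiding idea, exactly as in Facts~\ref{f:A-AB_edges} and~\ref{f:A-ABC_noedge}, is that $w\in C_i$ together with two consecutive vertices drawn from $\{v_i,v_{i+1},v_{i+2}\}$ forms a triangle, and I then want to attach two non-adjacent ``horns'': one is a further cycle vertex adjacent to exactly one triangle vertex, and the other is $w'$, which under the assumption $ww'\notin E(G)$ must also meet the triangle in exactly one vertex, namely the common neighbour $v_{i+1}$.

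First I would treat the cases $w'\in A_{i+1}$ and $w'\in B_{i+1}$ together. Here $N_Q(w')$ meets $\{v_{i-1},v_i,v_{i+1}\}$ only in $v_{i+1}$ (for $A_{i+1}$ trivially, and for $B_{i+1}$ because $N_Q(w')=\{v_{i+1},v_{i+3}\}$ and $v_{i+3}$ lies outside this window). Thus on $\{v_{i-1},v_i,v_{i+1},w,w'\}$ the triangle is $v_iv_{i+1}w$, the vertex $v_{i-1}$ is a horn attached to $v_i$ (since $v_{i-1}\not\sim v_{i+1}$ and $v_{i-1}\notin N_Q(w)$), and $w'$ is a horn attached to $v_{i+1}$; as $v_{i-1}\not\sim w'$ and, by assumption, $w\not\sim w'$, these five vertices induce a $bull$.

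Next I would treat $w'\in B_{i-1}$. The previous window fails because $w'\sim v_{i-1}$, which would make the two horns adjacent, so instead I would use the opposite window $\{v_{i+1},v_{i+2},v_{i+3},w,w'\}$. Now the triangle is $v_{i+1}v_{i+2}w$, the horn $v_{i+3}$ is attached to $v_{i+2}$, and $w'$ is a horn attached to $v_{i+1}$ with $v_{i+3}\not\sim w'$ (since $v_{i+3}\notin N_Q(w')=\{v_{i-1},v_{i+1}\}$), again yielding an induced $bull$. Alternatively, I would note that $B_{i-1}$ and $B_{i+1}$ are interchanged by the reflection of $Q$ fixing $v_{i+1}$ (which fixes $C_i$ and $A_{i+1}$ setwise), so this case follows from the previous one by symmetry.

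The only real subtlety — and the main thing to get right — is the choice of which three consecutive cycle vertices to use, so that the selected cycle-horn is non-adjacent to $w'$; picking the wrong side (for instance $v_{i-1}$ in the $B_{i-1}$ case) reintroduces the edge between the two horns and destroys the $bull$. Once the correct window is fixed in each case, every required adjacency and non-adjacency is read directly off the definitions of $A_j,B_j,C_j$ together with the fact that two distinct non-consecutive vertices of the $5$-cycle $Q$ are non-adjacent, so no nontrivial computation remains.
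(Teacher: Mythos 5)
Your proof is correct and follows essentially the same route as the paper: for $w'\in A_{i+1}\cup B_{i+1}$ the paper exhibits the bull on exactly your set $\{v_{i-1},v_i,v_{i+1},w,w'\}$, and its ``analogously'' for $B_{i-1}$ is precisely the reflected window you spell out. Your explicit verification of the $B_{i-1}$ case (and the remark about choosing the side so the two horns stay non-adjacent) is just a more detailed version of the paper's one-line argument.
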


\begin{proof}
    Suppose $ww' \notin E(G)$.
    If $w' \in A_{i+1} \cup B_{i+1}$, then the set of vertices $\{ v_{i-1}, v_{i}, v_{i+1}, w, w' \}$ induces~$bull$. Analogously for $w' \in B_{i-1}$
\end{proof}

\begin{fact}\label{f:C-A_noedge}
    If $w \in C_i$ and $w' \in A_i\cup A_{i+2} \cup A_{i+3} \cup A_{i+4} $, then $ww' \notin E(G)$. 
\end{fact}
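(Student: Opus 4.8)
The plan is to argue by contradiction: assume $ww'\in E(G)$ and exhibit an induced $bull$, contradicting the hypothesis that $G$ is $bull$-free. The starting observation is that $w\in C_i$ means $N_Q(w)=\{v_i,v_{i+1},v_{i+2}\}$, so both $\{w,v_i,v_{i+1}\}$ and $\{w,v_{i+1},v_{i+2}\}$ are triangles, whereas $w'\in A_j$ has the single cycle-neighbour $v_j$. The whole mechanism is that, since $j\neq i+1$, this unique neighbour $v_j$ can always be kept out of a suitable five-vertex set, turning $w'$ into a pendant that hangs off $w$ alone. (The excluded class $A_{i+1}$ behaves oppositely and is governed by Fact~\ref{f:C-AB_edges}.)

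I would split into two symmetric configurations, chosen according to which triangle of $w$ is used, so that $v_j$ is avoided. If $w'\in A_{i+2}\cup A_{i+3}$, then $v_j\in\{v_{i+2},v_{i+3}\}$ is disjoint from $\{v_i,v_{i+1},v_{i+4}\}$, and I claim that $\{w,w',v_i,v_{i+1},v_{i+4}\}$ induces a $bull$ with triangle $\{w,v_i,v_{i+1}\}$ and pendant edges $ww'$ and $v_iv_{i+4}$ (recall $v_{i+4}=v_{i-1}$). If instead $w'\in A_i\cup A_{i+4}$, then $v_j\in\{v_i,v_{i+4}\}$ is disjoint from $\{v_{i+1},v_{i+2},v_{i+3}\}$, and $\{w,w',v_{i+1},v_{i+2},v_{i+3}\}$ induces a $bull$ with triangle $\{w,v_{i+1},v_{i+2}\}$ and pendant edges $ww'$ and $v_{i+2}v_{i+3}$. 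Since every index of $A$ occurring in the statement lies in one of these two groups, the two configurations cover all cases.

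To close each case I would merely verify that the five listed vertices span exactly the five bull-edges, which amounts to recording the relevant non-adjacencies. For the first set the required non-edges are $wv_{i+4}\notin E(G)$ (as $v_{i+4}\notin N_Q(w)$), $v_{i+1}v_{i+4}\notin E(G)$, and $w'v_i,w'v_{i+1},w'v_{i+4}\notin E(G)$; for the second set they are $wv_{i+3}\notin E(G)$, $v_{i+1}v_{i+3}\notin E(G)$, and $w'v_{i+1},w'v_{i+2},w'v_{i+3}\notin E(G)$. Each of these is immediate from $N_Q(w)=\{v_i,v_{i+1},v_{i+2}\}$, from $N_Q(w')=\{v_j\}$ with $v_j$ chosen outside the set, and from the adjacencies of the $5$-cycle $Q$.

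The only delicate point is precisely this bookkeeping of non-adjacencies on $Q$: an overlooked chord (for instance $wv_i$ in the second set, or $w'v_{i+4}$ in the first) would spoil the $bull$, so the two-way case split is engineered exactly so that the unique cycle-neighbour $v_j$ of $w'$ never lands among the chosen five vertices. I would also note a slicker alternative: the statement is Fact~\ref{f:A-ABC_noedge} read with the roles of $w$ and $w'$ interchanged. Indeed, applying that fact to the $A$-vertex $w'\in A_j$ forbids an edge from $w'$ to $C_j\cup C_{j+1}\cup C_{j+2}\cup C_{j+3}$, and for $j\in\{i,i+2,i+3,i+4\}$ one checks $i\in\{j,j+1,j+2,j+3\}\pmod 5$, so $w\in C_i$ indeed falls in that range; this yields a one-line derivation once the index arithmetic modulo $5$ is carried out.
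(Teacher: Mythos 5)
Your proof is correct and matches the paper's: the two five-vertex sets you exhibit ($\{w,w',v_i,v_{i+1},v_{i+4}\}$ for $w'\in A_{i+2}\cup A_{i+3}$ and $\{w,w',v_{i+1},v_{i+2},v_{i+3}\}$ for $w'\in A_i\cup A_{i+4}$) are exactly the bulls the paper uses, only with the case split presented in a different order. Your closing remark is also accurate: the statement is precisely the $C$-part of Fact~\ref{f:A-ABC_noedge} after reindexing modulo $5$, so it would follow in one line from that fact.
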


\begin{proof}
    Suppose $ww' \in E(G)$. 

    If $w' \in A_i$, then the set of vertices $\{v_{i+3}, v_{i+2}, v_{i+1}, w, w'\}$ induces $bull$. Analogously for $w' \in A_{i+2}$ 
    
    If $w' \in A_{i+3}$, then the set of vertices $\{v_{i-1}, v_i, v_{i+1}, w, w'\}$ induces $bull$. Analogously for $w' \in A_{i+4}$.

\end{proof}

\begin{fact}\label{f:ifone_thentwo}
    If $w,w' \in A_i'$, $ww' \in E(G)$ and there exists $w'' \in V(G)\setminus (A_i \cup D)$ such that $ww'' \in E(G)$, then $w'w'' \in E(G)$.
\end{fact}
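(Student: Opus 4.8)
The plan is to exploit the triangle that $w$, $w'$ and $v_i$ form and then to localise $w''$ relative to the cycle $Q$. Since $A_i' \subseteq A_i$, both $w$ and $w'$ satisfy $N_Q(w) = N_Q(w') = \{v_i\}$, and together with the hypothesis $ww' \in E(G)$ this yields a triangle $\{v_i, w, w'\}$. If $w'' \in Q$, then $w''$ is a neighbour of $w$ on $Q$, hence $w'' = v_i$, and $w'v_i \in E(G)$ is immediate; so from now on I would assume $w'' \notin Q$, that is $w'' \in A \cup B \cup C$ (recall that by hypothesis $w'' \notin A_i \cup D$).

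First I would pin down the position of $w''$. Applying Fact~\ref{f:A-ABC_noedge} to $w \in A_i$ rules out every neighbour in $A_{i+2} \cup A_{i+3} \cup B_i \cup B_{i+3} \cup C_i \cup C_{i+1} \cup C_{i+2} \cup C_{i+3}$, so the only admissible locations that remain are $A_{i+1}$, $A_{i-1}$, $B_{i-1}$, $B_{i+1}$, $B_{i+2}$ and $C_{i-1}$. For the four ``symmetric'' locations $A_{i+1} \cup A_{i-1} \cup B_{i-1} \cup C_{i-1}$ the conclusion comes for free: since $w' \in A_i$ as well, Fact~\ref{f:A-AB_edges} forces $w'w'' \in E(G)$ whenever $w'' \in A_{i+1} \cup A_{i-1} \cup B_{i-1}$, and Fact~\ref{f:C-AB_edges} (with $w''$ playing the role of the $C_{i-1}$-vertex and $w' \in A_{(i-1)+1} = A_i$) forces $w'w'' \in E(G)$ when $w'' \in C_{i-1}$.

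This leaves the two genuine cases $w'' \in B_{i+1}$ and $w'' \in B_{i+2}$, which I would treat by contradiction: assume $w'w'' \notin E(G)$ and exhibit an induced bull on the triangle $\{v_i, w, w'\}$. In both cases $w''$ is a pendant attached to $w$, since it is adjacent to $w$ but not to $v_i$ (as $v_i \notin N_Q(w'')$) nor to $w'$ (by assumption); it then remains to attach a second pendant at $v_i$, using a cycle-vertex not adjacent to $w''$. The subtle point — and the only place where the two cases differ — is the choice of this vertex. For $w'' \in B_{i+1}$ (so $N_Q(w'') = \{v_{i+1}, v_{i+3}\}$) the neighbour $v_{i+1}$ of $v_i$ is itself adjacent to $w''$ and would only produce a house, so I would instead take $v_{i-1}$, obtaining the induced bull $\{w'', w, w', v_i, v_{i-1}\}$; symmetrically, for $w'' \in B_{i+2}$ (so $N_Q(w'') = \{v_{i+2}, v_{i-1}\}$) the vertex $v_{i-1}$ is adjacent to $w''$, so I would take $v_{i+1}$, obtaining the induced bull $\{w'', w, w', v_i, v_{i+1}\}$. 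Either bull contradicts the $bull$-freeness of $G$, so $w'w'' \in E(G)$ in all cases. The main obstacle is exactly this last choice — selecting the pendant cycle-vertex so that the two pendants of the bull stay non-adjacent — because the naive neighbour of $v_i$ is adjacent to $w''$ and yields no forbidden subgraph.
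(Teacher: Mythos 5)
Your proposal is correct and follows essentially the same route as the paper: after Fact~\ref{f:A-ABC_noedge} narrows $w''$ to $A_{i\pm1}\cup B_{i-1}\cup B_{i+1}\cup B_{i+2}\cup C_{i-1}$, the ``free'' cases are dispatched by Facts~\ref{f:A-AB_edges} and~\ref{f:C-AB_edges}, and the cases $w''\in B_{i+1}$ and $w''\in B_{i+2}$ are killed by the same induced bulls $\{v_{i-1},v_i,w,w',w''\}$ and $\{v_{i+1},v_i,w,w',w''\}$ that the paper exhibits. The only (harmless) differences are that you handle $w''\in Q$ explicitly and cite Fact~\ref{f:A-AB_edges} for $A_{i+1}$ where the paper reuses the bull, and that the paper phrases the $C_{i-1}$ case as yielding a forbidden $K_4$.
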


\begin{proof}
    Suppose $ww', ww'' \in E(G)$ and $w'w'' \notin E(G)$. 

    By Fact \ref{f:A-ABC_noedge} $w'' \notin A_{i+2}\cup A_{i+3}\cup B_i\cup B_{i+3}\cup C_i \cup C_{i+1} \cup C_{i+2} \cup C_{i+3}$.   Let us consider possible cases. 
    
    If $w'' \in A_{i+1} \cup B_{i+1}$, then the set of vertices $\{v_{i-1},$ $v_i,$ $w,$ $w',$ $w''\}$ induces $bull$. Analogously for $A_{i-1}$ and $B_{i+2}$.

    If $w'' \in B_{i-1}$, then by Fact \ref{f:A-AB_edges} we have $w'w'' \in E(G)$.

    If $w'' \in C_{i-1}$, then by Fact \ref{f:C-AB_edges} the set of vertices $\{v_i, w, w', w''\}$ induces $K_4$.
    
\end{proof}

\begin{fact}\label{f:C}%\label{B'}
    There is only one $i$ such that $C_i \neq \emptyset$. Moreover, if $C_i \neq \emptyset$ then $A_{i+1}' \cup B_{i+1}' \cup B_{i+2}' \cup B_{i+3}' \cup B_{i+4}' = \emptyset$. 
\end{fact}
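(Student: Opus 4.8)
The plan is to prove Fact~\ref{f:C} in two stages, matching the two assertions in the statement. We are working in the case $p=5$ with $V(G)=Q\cup A\cup B\cup C\cup D$ and the standing assumption that $G$ contains no $W_5$, no $K_4$, and no $M_7$. The indices run modulo $5$, which I will use repeatedly without further comment.

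\textbf{Stage 1: at most one $C_i$ is nonempty.} Suppose for contradiction that $C_i\neq\emptyset$ and $C_j\neq\emptyset$ with $i\neq j$; say $w\in C_i$ and $w'\in C_j$. Because $p=5$, any two distinct indices differ (mod $5$) by either $1$ or $2$. The case $|i-j|=1$ (consecutive) is already excluded by Fact~\ref{sasiedC}, which says that a nonempty $C_i$ forces $C_{i+1}=C_{i-1}=\emptyset$. So I only need to rule out $|i-j|=2$, i.e. $j=i+2$. Here the edge/non-edge dichotomy on $ww'$ is governed by Fact~\ref{mozkr}: since $w\in C_i$, $w'\in C_{i+2}$ and $|i-(i+2)|=2$, Fact~\ref{mozkr} gives precisely the relevant sub-case (with $p=5$, both in $C$), whose conclusion is that $G$ contains $M_7$. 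That contradicts our standing assumption, so no two $C$-sets can be simultaneously nonempty.

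\textbf{Stage 2: the emptiness of the primed $A$/$B$ sets when $C_i\neq\emptyset$.} Fix the unique $i$ with $C_i\neq\emptyset$ and pick $w\in C_i$. I must show each of $A_{i+1}'$, $B_{i+1}'$, $B_{i+2}'$, $B_{i+3}'$, $B_{i+4}'$ is empty; equivalently, no primed vertex in these classes can exist. The strategy is uniform: a primed vertex $u$ in one of these classes comes with a witness $u'$ in the same (or an allied) class with $uu'\in E(G)$, and I combine the incidences forced by Facts~\ref{f:C-AB_edges} and~\ref{f:C-A_noedge} with the definition of ``primed'' to manufacture a forbidden configuration. Concretely, for $A_{i+1}'$ and $B_{i+1}'$, Fact~\ref{f:C-AB_edges} forces $w$ adjacent to both $u$ and its witness $u'$, and together with the edge $uu'$ and a suitable cycle vertex (e.g. $v_{i+1}$, which is adjacent to $w$ and to everything in $A_{i+1}\cup B_{i+1}$) this yields either a $K_4$ or a $bull$; likewise $B_{i-1}=B_{i+4}'$ is handled by the $B_{i-1}$ clause of Fact~\ref{f:C-AB_edges}. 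The classes $B_{i+2}'$ and $B_{i+3}'$ are the delicate ones, because $C$--$B$ adjacency there is not directly pinned down by a single earlier fact; I expect to need Fact~\ref{f:B-B_noedge} (to locate the witness edges) together with $bull$-detection on sets mixing $w$, its $B$-neighbours, and the cycle vertices $v_i,v_{i+1},v_{i+2},v_{i+3}$.

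The main obstacle will be Stage~2 for the ``far'' sets $B_{i+2}'$ and $B_{i+3}'$: unlike $A_{i+1}$ and $B_{i\pm1}$, a vertex in $B_{i+2}$ or $B_{i+3}$ has no guaranteed edge to $w$, so I cannot immediately feed a common neighbour into a $bull$. The way through is to exploit the \emph{primed} structure — each such vertex sits on an edge inside $B'\cup C$ — and to use Fact~\ref{mozkrs} (which constrains which endpoint of a $2$-type edge lies in $B^*$) to force the existence of a concrete neighbour configuration, then extract a $bull$, a $K_4$, or an $M_7$. I would organize this as a short case analysis on where the witness of the primed vertex lies (inside $B_{i+2}$ itself, in an adjacent $B_{i+1}/B_{i+3}$, or in $C_i$), reducing each to an already-forbidden subgraph; once all five primed classes are eliminated, the statement follows.
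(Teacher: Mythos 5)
There are two genuine gaps, one in each stage. In Stage 1, for $j=i+2$ you invoke Fact~\ref{mozkr}, but the conclusion of that fact is only that $ww'\notin E(G)$; the $M_7$ appearing in its proof is derived under the contradiction hypothesis $ww'\in E(G)$, so you cannot quote it to rule out the situation where $w\in C_i$ and $w'\in C_{i+2}$ are simply non-adjacent. Your argument therefore leaves the non-edge case completely open. What actually closes it (and what the paper's line ``if $w'\in C_{i+2}$ then $G$ contains $M_7$'' rests on) is that $Q\cup\{w,w'\}$ already contains the Moser spindle as a not-necessarily-induced subgraph irrespective of the pair $ww'$: the diamonds $\{v_i,v_{i+1},v_{i+2},w\}$ and $\{v_{i+2},v_{i+3},v_{i+4},w'\}$ share the vertex $v_{i+2}$ and their far tips $v_i,v_{i+4}$ are adjacent on $Q$.

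In Stage 2 the cases you yourself flag as delicate, $B_{i+2}'$ and $B_{i+3}'$, are only sketched as a plan, and the plan points in the wrong direction. By definition $B_{i+2}'=\{v\in B_{i+2}:\exists v'\in B_{i+2}\cup C_{i+2},\ vv'\in E(G)\}$, so (once Stage 1 gives $C_{i+2}=\emptyset$) the witness of a primed vertex lies \emph{inside} $B_{i+2}$, not in ``an adjacent $B_{i+1}/B_{i+3}$''; Fact~\ref{mozkrs}, which concerns $2$-type edges and $B^*$, is not the relevant tool. The paper settles these cases by the same spindle construction as above: an edge $w'w''$ inside $B_{i+2}$ yields the diamond $\{v_{i+2},v_{i+4},w',w''\}$, which glued at $v_{i+2}$ to the diamond $\{v_i,v_{i+1},v_{i+2},w\}$, with tips $v_i$ and $v_{i+4}$ adjacent, is exactly $M_7$ (symmetrically for $B_{i+3}'$, gluing at $v_i$). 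Without this construction the two hardest of the five primed classes remain unproved. The remaining parts ($A_{i+1}'$, $B_{i+1}'$, $B_{i+4}'$ via Fact~\ref{f:C-AB_edges} and a $K_4$ or $bull$) are correct and essentially match the paper.
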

\begin{proof}
    Let $w \in C_i$. We consider possible cases.
    
    If $w', w'' \in A_{i+1}'$ and $ww' \in E(G)$, then by Fact \ref{f:C-AB_edges} the set of vertices $\{v_{i+1}, w, w', w''\}$ induces $K_4$. 
    
    If $w' \in C_{i+1}$, then either the set of vertices $\{v_{i-1}, v_i, w, v_{i+1}, w'\}$ induces~$bull$ (if $ww' \notin E(G)$) or the set of vertices $\{v_i, v_{i+1}, w, w'\}$ induces $K_4$ (otherwise). Analogously for $w' \in C_{i-1}$.

    If $w' \in C_{i+2}$, then $G$ contains the spindle graph $M_7$. Analogously for $w' \in C_{i+3}$.

    If $w', w'' \in B_{i+1}'$, then the set of vertices $\{v_{i-1}, v_i, w, v_{i+1}, w'\}$ induces $bull$ (if $ww' \notin E(G)$), or the set of vertices $\{v_{i+4}, v_{i+3}, w'', w', w \}$ induces $bull$ (if $ww' \in E(G)$ but $ww'' \notin E(G)$), or the set of vertices $\{v_i, w, w', w''\}$ induces $K_4$. Analogously for $w',w'' \in B_{i-1}$.

    If $w', w'' \in B_{i+2}'$, then $G$ contains the spindle graph $M_7$. Analogously for $w',w'' \in B'_{i+3}$.
\end{proof}

\begin{fact}\label{f:ifone_thentwo2} 
    If $w, w' \in B_i'$, $ww' \in E(G)$ and there exists $w'' \in V(G) \setminus (B_i \cup C_i \cup D)$ such that $ww'' \in E(G)$, then $w'w'' \in E(G)$.  
\end{fact}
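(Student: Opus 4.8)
The plan is to mirror the proof of the $A$-analogue, Fact~\ref{f:ifone_thentwo}, with the common core now a $B$-pair instead of an $A$-pair. I assume for contradiction that $ww''\in E(G)$ while $w'w''\notin E(G)$, and split into cases according to the location of $w''$. The structural input I would exploit throughout is that $w,w'\in B_i$ both satisfy $N_Q=\{v_i,v_{i+2}\}$ and are adjacent, so each of $\{w,w',v_i\}$ and $\{w,w',v_{i+2}\}$ induces a triangle; these will be the cores of the $bull$s I build, with $w''$ serving as one horn. The case $w''\in Q$ is immediate: the only $Q$-neighbours of $w$ are $v_i,v_{i+2}$, both of which lie in $N(w')$, so $w'w''\in E(G)$.

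Next I would clear away the impossible and the forced positions. By Fact~\ref{f:A-ABC_noedge}, $B_i$ has no neighbour in $A_i\cup A_{i+2}$, and by Fact~\ref{f:B-B_noedge}, $w\in B_i$ has no neighbour in $B_{i+2}\cup B_{i+3}\cup C_{i+2}\cup C_{i+3}$; each of these six positions contradicts $ww''\in E(G)$ and is discarded. If $w''\in A_{i+1}$, then Fact~\ref{f:A-AB_edges} makes $w''$ adjacent to every vertex of $B_i$, in particular to $w'$, giving the conclusion; similarly, if $w''\in C_{i-1}\cup C_{i+1}$, then Fact~\ref{f:C-AB_edges} forces $w'w''\in E(G)$. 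Together with $D$ (excluded by hypothesis) and $B_i,C_i$ (also excluded), this leaves exactly the four residual cases $w''\in A_{i+3}\cup A_{i+4}\cup B_{i+1}\cup B_{i-1}$.

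For these four I would exhibit an induced $bull$ directly. The guiding observation is that, since $w''\notin B_i\cup C_i\cup D$, the neighbourhood $N_Q(w'')$ cannot contain both $v_i$ and $v_{i+2}$; hence $w''$ misses at least one of them, which I take as the apex $a\in\{v_i,v_{i+2}\}$ of the triangle $\{w,w',a\}$, hanging $w''$ on $w$ as one horn and a suitable cycle-neighbour of $a$ as the other. Concretely, $\{v_{i+1},v_{i+2},w,w',w''\}$ is a $bull$ when $w''\in A_{i+3}$; $\{v_i,v_{i+1},w,w',w''\}$ when $w''\in A_{i+4}$; $\{v_{i-1},v_i,w,w',w''\}$ when $w''\in B_{i+1}$; and $\{v_{i+2},v_{i+3},w,w',w''\}$ when $w''\in B_{i-1}$. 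In each case one reads off from $N_Q(w'')$ that $w''$ is non-adjacent to the chosen apex and to the $Q$-horn, while the $Q$-horn is non-adjacent to $w,w'$ automatically because $N_Q(w)=N_Q(w')=\{v_i,v_{i+2}\}$.

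The step I expect to be delicate is the choice of the second horn. The natural candidate --- the cycle-neighbour of $a$ lying ``towards'' $w''$ --- is often itself adjacent to $w''$ (for instance, for $w''\in B_{i-1}$ both neighbours of $v_i$ are adjacent to $w''$), which would add a horn--horn edge and turn the configuration into a ``house'' rather than a $bull$. The resolution is that whenever both cycle-neighbours of one apex are spoiled, $w''$ also misses the \emph{other} apex, so switching apex from $v_i$ to $v_{i+2}$ (or back) always frees a good horn; confirming that this switch is available in each of the four residual configurations, and checking that the six fact-discarded positions together with the forced and trivial ones genuinely exhaust all locations of $w''$, is the only bookkeeping requiring care.
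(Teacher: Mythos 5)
Your proof is correct and follows essentially the same route as the paper's: you eliminate most positions of $w''$ via Facts~\ref{f:A-ABC_noedge} and~\ref{f:B-B_noedge}, note that the edge is forced when $w''\in A_{i+1}$ by Fact~\ref{f:A-AB_edges}, and exhibit an induced $bull$ on a triangle $\{v_i,w,w'\}$ or $\{v_{i+2},w,w'\}$ with horns $w''$ and a cycle vertex in the four residual cases $A_{i+3}, A_{i+4}, B_{i+1}, B_{i-1}$ (all four of your bulls check out). The only minor divergence is that the paper dismisses $w''\in C_{i\pm1}$ by observing these sets are empty (Fact~\ref{f:C}, since $B_i'\neq\emptyset$), whereas you show the edge $w'w''$ would be forced there anyway by Fact~\ref{f:C-AB_edges}; both are valid.
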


\begin{proof} 
    By Facts \ref{f:B-B_noedge}, \ref{f:A-ABC_noedge} we have $w'' \notin B_{i+2} \cup B_{i+3} \cup A_{i+2} \cup A_i$.
    
    By Fact \ref{f:C} we know that $C\setminus C_i = \emptyset$. Then $w'' \notin C$. If $w'' \in A_{i+1}$, then $w'w''$ exists by Fact \ref{f:A-AB_edges}.

    Suppose now $w'' \in B_{i+1} \cup A_{i+3}$ and $ww'' \in E(G), w'w'' \notin E(G)$. Then the set of vertices $\{v_{i-1}, v_i, w', w, w''\}$ induces $bull$. Analogously if $w'' \in B_{i-1} \cup A_{i+4}$.

\end{proof}

\begin{fact}\label{f:D-AB_edges}
    Let $w \in D_i$.
    If $w' \in A_{i+1}\cup A_{i+2} \cup B_{i-1}\cup B_i \cup B_{i+1} \cup B_{i+2}$, then $ww' \in E(G)$. 
\end{fact}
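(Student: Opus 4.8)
The plan is to run the same contradiction scheme as in the previous facts of this subsection. Fix $w \in D_i$ and $w'$ in one of the six sets, \emph{assume} $ww' \notin E(G)$, and produce an induced $bull$; since $G$ is $bull$-free this is the desired contradiction, forcing $ww' \in E(G)$. The argument is thus a finite case analysis on the membership of $w'$, carried out with all indices read modulo $p=5$. Before starting I would note the symmetry of the configuration under the reflection $i+k \mapsto i+3-k$ of the indices around $w$'s arc: it preserves membership in $D_i$ and interchanges $A_{i+1} \leftrightarrow A_{i+2}$, $B_i \leftrightarrow B_{i+1}$ and $B_{i-1} \leftrightarrow B_{i+2}$, so it suffices to treat one set from each pair and invoke symmetry for the other.

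The tool I would use in every case is that $w \in D_i$ is adjacent to the four consecutive vertices $v_i, v_{i+1}, v_{i+2}, v_{i+3}$ but not to $v_{i-1} = v_{i+4}$; this supplies the triangles $\{w, v_j, v_{j+1}\}$ for $j \in \{i, i+1, i+2\}$. To build a $bull$ I choose one such triangle, attach $w'$ as a pendant at the unique vertex of $N_Q(w')$ lying in that triangle, and attach a second pendant — a cycle vertex that $w'$ does not see — either at $w$ or at the far cycle vertex of the triangle. Concretely, the sets $\{w', v_{i+1}, w, v_i, v_{i-1}\}$ and $\{w', v_{i+2}, w, v_{i+3}, v_{i+4}\}$ handle the cases $w' \in A_{i+1}$ and $w' \in A_{i+2}$, while $\{v_{i+3}, w, v_i, v_{i+1}, w'\}$, $\{v_{i-1}, v_{i+3}, w, v_{i+2}, w'\}$, $\{v_{i-1}, v_i, w, v_{i+1}, w'\}$ and $\{v_i, w, v_{i+3}, v_{i+2}, w'\}$ handle $w' \in B_{i-1}, B_i, B_{i+1}, B_{i+2}$. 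In each case the handful of required non-adjacencies follow directly from the definitions of $A_i, B_i, D_i$ together with the non-edge $ww'$ and the fact that $w$ misses $v_{i-1}$.

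The one place that needs care, and the step I expect to be the real obstacle, is the $B$-subcases. There $w'$ has a second neighbour on $Q$, so the most obvious choice of second pendant can turn out to be adjacent to $w'$; the five vertices then induce a $C_5$ or a $C_5$-with-one-chord (a house) instead of a $bull$, and this is \emph{not} a contradiction, because induced $5$-cycles are not forbidden here and $Q$ is already a shortest odd hole. The fix — and the reason the sets above use the outer vertex $v_{i-1}$ (invisible to $w$) or the endpoint $v_{i+3}$ of $w$'s arc — is to select the triangle and the two pendants so that the two pendant vertices are provably non-adjacent. Once that choice is pinned down in each $B$-subcase, all remaining verifications are routine.
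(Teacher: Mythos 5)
Your proposal is correct and follows essentially the same strategy as the paper: assume $ww'\notin E(G)$ and exhibit an explicit induced $bull$ built from a triangle $\{w,v_j,v_{j+1}\}$ with two pendants, handling each of the six sets by a short case analysis (the paper treats $A_{i+2}$, $B_{i-1}$ and $B_i$ by symmetry rather than writing out separate five-sets, but the configurations are the same). All six of your vertex sets do induce bulls under the stated non-adjacency assumptions, so no gap remains.
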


\begin{proof}
    Suppose $ww' \notin E(G)$. 
    
    If $w' \in A_{i+1}$, then the set of vertices $\{ v_{i-1}, v_{i}, w, v_{i+1}, w'\}$ induces $bull$. Analogously for $w' \in A_{i+2}$. 
        
    If $w' \in B_i \cup B_{i+1}$, then the set of vertices $\{w', v_i, v_{i+1}, w, v_{i+4}\}$ induces $bull$. 

    If $w' \in B_{i+2}$, then the set of vertices $\{v_i, w, v_{i+3}, v_{i+2}, w'\}$ induces $bull$. 
\end{proof}

\begin{fact}\label{f:D}%\label{pojpoczw}%{pojpoczw2}
    Let $D \neq \emptyset$. Then $D=D^*_i$ for some $i$ and $A_i \cup A_{i+3} \cup A'_{i+1} \cup A'_{i+2} \cup B' \cup C= \emptyset$.
\end{fact}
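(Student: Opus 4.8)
The plan is to fix a vertex $w \in D$, say $w \in D_i$, and normalise by a rotation of $Q$ so that $i = 1$; thus $N_Q(w) = \{v_1, v_2, v_3, v_4\}$ and $w \not\sim v_5$. I work under the standing assumptions of this subsection: $G$ is $(bull, E)$-free, $V(G) = Q \cup A \cup B \cup C \cup D$ with $p = 5$, and $G$ contains no $K_4$, no $W_5$ and no $M_7$. Writing $D_1^{*} = D_1 \setminus D_1'$ for the vertices of $D_1$ with no neighbour inside $D_1$ (extending the notation $A_i^{*}, B_i^{*}$), the statement splits into: (a) $D = D_1$ and $D_1$ is edgeless, i.e.\ $D = D_1^{*}$; and (b) $A_1 \cup A_4 \cup A_2' \cup A_3' \cup B' \cup C = \emptyset$. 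Several pieces fall out immediately from $K_4$-freeness together with Fact~\ref{f:D-AB_edges}. If $w', w'' \in D_1$ with $w'w'' \in E(G)$, then $\{w', w'', v_1, v_2\}$ induces $K_4$, so $D_1$ is edgeless. If $u, u' \in A_2'$ with $uu' \in E(G)$, then Fact~\ref{f:D-AB_edges} forces $w \sim u, u'$, and $\{w, u, u', v_2\}$ induces $K_4$; the same argument with $v_3$ kills $A_3'$. Likewise, for $B_j'$ with $j \in \{5, 1, 2, 3\}$, Fact~\ref{f:D-AB_edges} makes $w$ adjacent to both endpoints of any edge inside $B_j$, and one of $v_j, v_{j+2}$ lies in $N_Q(w)$, again yielding $K_4$.

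The assertions $A_1 = \emptyset$ and $C = \emptyset$ are where I would exhibit a $bull$. For $u \in A_1$: if $uw \in E(G)$, then $\{u, w, v_1, v_3, v_5\}$ induces a $bull$ (triangle $u, w, v_1$, with $v_3$ pendant at $w$ and $v_5$ pendant at $v_1$); if $uw \notin E(G)$, then $\{u, v_1, v_2, w, v_4\}$ induces a $bull$ (triangle $v_1, v_2, w$, with $u$ pendant at $v_1$ and $v_4$ pendant at $w$); hence $A_1 = \emptyset$. The case $A_4 = \emptyset$ then needs no separate work: the reflection of $Q$ fixing $v_5$ and swapping $v_1 \leftrightarrow v_4$, $v_2 \leftrightarrow v_3$ preserves $N_Q(w)$ and exchanges the roles of $A_1$ and $A_4$, so the identical computation applies. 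For $C = \emptyset$ I would first invoke Fact~\ref{f:C} to reduce to a single nonempty $C_{i_0}$, then run a short analysis on $i_0 \in \{1, \dots, 5\}$: whenever $u \in C_{i_0}$ shares two cyclically consecutive neighbours with $w$ and $uw \in E(G)$ we obtain $K_4$, and for $C_1, C_2$ the non-adjacent case gives a $bull$ exactly as above (e.g.\ for $u \in C_1$ with $uw \notin E(G)$, the set $\{u, v_3, v_4, w, v_5\}$ induces a $bull$).

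The hard part is the family of cases in which the extra vertex shares an edge of $Q$ with $w$ but is \emph{not} adjacent to $w$ (or, for $C_4$, shares only the non-consecutive pair $v_1,v_4$): there no $bull$ or induced $E$ is available, and the contradiction must come from the Moser spindle $M_7$. This occurs for the uniqueness claim $D = D_1$ — any $w' \in D_j$ with $j \neq 1$ shares three, hence two consecutive, vertices with $w$, so $w \sim w'$ gives $K_4$ while $w \not\sim w'$ makes the seven vertices $Q \cup \{w, w'\}$ $4$-chromatic, $K_4$-free and without a vertex of degree $5$, forcing an $M_7$ — for the diagonal cases $C_3, C_4, C_5$ (with $uw \notin E(G)$, and additionally $C_4$ with $uw \in E(G)$), and for the last piece $B_4' = \emptyset$. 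For $B_4'$ I would note that a single vertex of $B_4$ is harmless (the seven vertices $Q \cup \{w, u\}$ are still $3$-colourable), so only an edge $uu'$ inside $B_4$ matters; if either endpoint is adjacent to $w$ we get a $bull$ as in the $A_1$ computation, and if neither is, then in $Q \cup \{w, u, u'\}$ the diamond $\{u, u', v_1, v_4\}$ forces $v_1$ and $v_4$ to share a colour while the fan on $w, v_1, v_2, v_3, v_4$ forbids it, so this configuration is again $4$-chromatic and contains $M_7$. The real work — and the step I expect to be most delicate — is to turn each such $4$-chromaticity argument into an explicit copy of $M_7$ on the listed vertices, since the reductions of Section~\ref{sec:preliminary} only license us to stop once a genuine spindle (or odd wheel, or $K_4$) has been exhibited.
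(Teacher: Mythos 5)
Your case analysis coincides almost exactly with the paper's proof of Fact~\ref{f:D}: the same decomposition into the sets $D_j$, $A_i$, $A_{i+3}$, $A'_{i+1}$, $A'_{i+2}$, $B'$, $C$, the same $K_4$'s obtained from Fact~\ref{f:D-AB_edges}, and even the same five-vertex bulls for $A_i$ (both $\{w',w,v_i,v_{i+1},v_{i+3}\}$ and $\{v_{i-1},v_i,w',w,v_{i+2}\}$) and for $C_i$ ($\{v_{i+4},v_{i+3},w,v_{i+2},w'\}$). One genuine (small) improvement: by observing that for every $j\in\{i-1,i,i+1,i+2\}$ one of $v_j,v_{j+2}$ lies in $N_Q(w)$, you dispose of $B'_{i-1}$ and $B'_{i+2}$ with a direct $K_4$, whereas the paper relegates $B'_{i+2},B'_{i+3},B'_{i+4}$ to the $M_7$ bucket; only $B'_{i+3}$ genuinely needs the spindle.

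The one real gap is exactly where you flag it: the $M_7$ cases ($D_j$ with $j\ne i$, $C_{i+2}\cup C_{i+3}\cup C_{i+4}$, $B'_{i+3}$). The paper simply asserts the spindle there, but your proposed substitute --- ``$4$-chromatic, $K_4$-free, no vertex of degree $5$ on seven vertices, hence $M_7$'' --- is not a proof. It silently invokes the classification of $4$-critical graphs on at most seven vertices, and its premise actually fails in at least one sub-case: for $u\in C_{i+3}$ with $uw\in E(G)$, the vertex $w$ has degree $5$ in $G[Q\cup\{w,u\}]$ (neighbours $v_i,v_{i+1},v_{i+2},v_{i+3},u$), and what one finds there is the induced odd wheel $W_5$ on $\{w\}\cup\{v_i,v_{i+1},v_{i+2},v_{i+3},u\}$ rather than an obvious $M_7$ --- still a valid certificate under the section's standing assumptions, but not the one your heuristic promises. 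Likewise the $B'_{i+3}$ configuration lives on eight vertices, so the seven-vertex criticality argument does not apply as stated (an explicit $M_7$ does sit inside $Q\cup\{w,u,u'\}\setminus\{v_{i+4}\}$, but it must be exhibited). So the skeleton is right and matches the paper; to be a complete proof each of these cases needs an explicit embedding of $M_7$ (or $W_5$), which neither your $4$-chromaticity argument nor your stated intention yet supplies.
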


\begin{proof}
    Suppose $w \in D_i$. 
    If $w'\in D\setminus D_i$, then $G$ contains the spindle graph $M_7$.
    
    If $w' \in D_i$ and $ww' \in E(G)$, then the set of vertices $\{ v_i, v_{i+1}, w, w' \}$ induces $K_4$.

    If $w' \in A_i$, then either the set of vertices $\{w', w, v_i, v_{i+1}, v_{i+3}\}$ induces $bull$ (if $ww' \notin E(G)$) or the set of vertices $\{v_{i-1}, v_i, w', w, v_{i+2}\}$ induces $bull$ (otherwise). Analogously for $A_{i+3}$.

    Suppose $w', w'' \in A'_{i+1}$ and $w'w'' \in E(G)$. By Fact~\ref{f:D-AB_edges}  the set of vertices $\{w, w', w'', v_{i+1} \}$ induces $K_4$.
    Analogously for $A'_{i+2}$.

    If $w'\in C_i$, then the set of vertices $\{v_{i+4}, v_{i+3}, w, v_{i+2}, w'\}$ induces~$bull$ if  and $ww' \notin E(G)$ or the set of vertices $\{v_i, v_{i+1}, w, w'\}$ induces $K_4$ if $ww' \in E(G)$. Analogously for $w'\in C_{i+1}$.

    If $w' \in C_{i+2} \cup C_{i+3} \cup C_{i+4}$, then the graph $G$ contains the spindle graph $M_7$.

    Finally, suppose $w'w'' \in E(G)$. 

    If $w', w'' \in B_i'$, then by Fact \ref{f:D-AB_edges} the set of vertices $\{v_i, w, w', w''\}$ induces $K_4$. Analogously for $w', w'' \in B_{i+1}'$.

    If $w', w'' \in B_{i+2}' \cup B_{i+3}' \cup B_{i+4}'$, then the graph $G$ contains the spindle graph~$M_7$.
\end{proof}

Due to all the facts above, we can distinguish following types of possible edges:
\begin{itemize}
    \item Type $0$: edges incident to the cycle $Q$.
    \item Type $1$: edges $ww'$ such that $w, w' \in B_i \cup C_i$.
    \item Type $2$: edges $ww'$ such that $w\in B_i\cup C_i$ and $w'\in B_{i+1} \cup C_{i+1}$.
    \item Type $3$: edges $ww'$ such that $w\in A_i$ and $w' \in B_{i-1} \cup C_{i-1}$.
    \item Type $4$: edges $ww'$ such that $w, w' \in A_i$.
    \item Type $5$: edges $ww'$ such that $w\in A_i$ and $w'\in A_{i+1}$.
    \item Type $6$: edges $ww'$ such that $w \in A_i$ and $w' \in B_{i+1}\cup B_{i+2}$. 
    \item Type $7$: edges non-incident to $Q$ and incident to the set $D$.
\end{itemize}

Let us recall that by Fact \ref{f:A-AB_edges} edges of types $3$ and $5$ are obligatory, that is, if respective sets are nonempty, every edge between them exists.

\begin{figure}[htb]
    \centering
    
    \begin{subfigure}{0.4\textwidth}
    \includegraphics[width=0.75\linewidth]{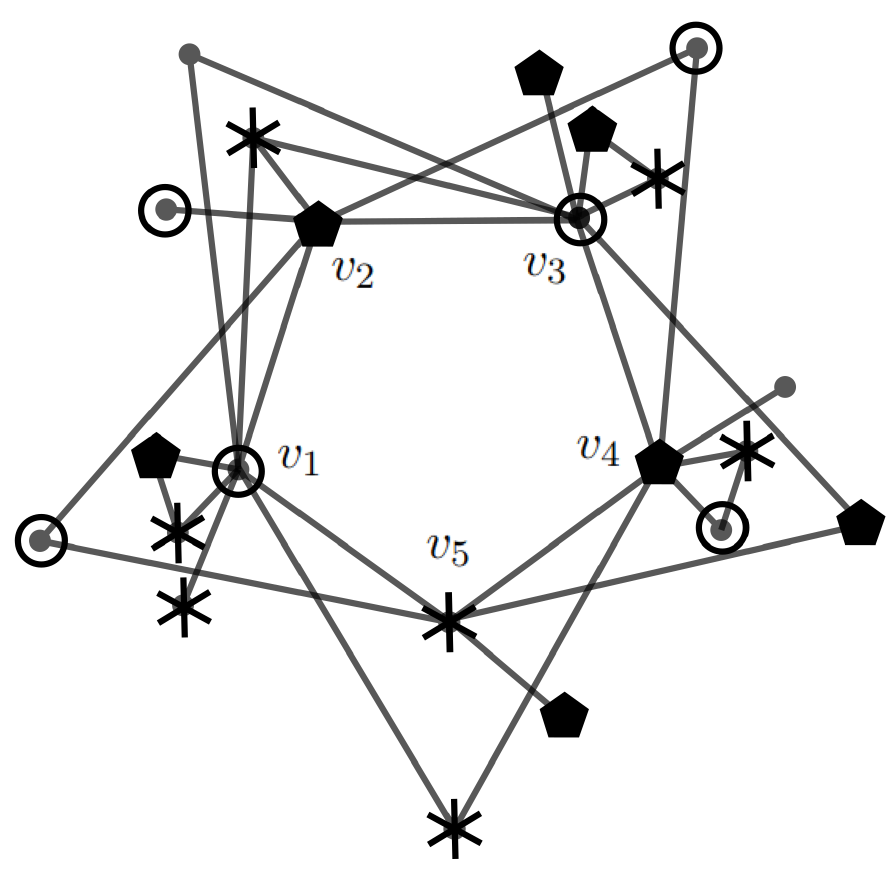} 
    \caption{Case 1/2.}
    
    \end{subfigure}
    \begin{subfigure}{0.4\textwidth}
    \includegraphics[width=0.75\linewidth]{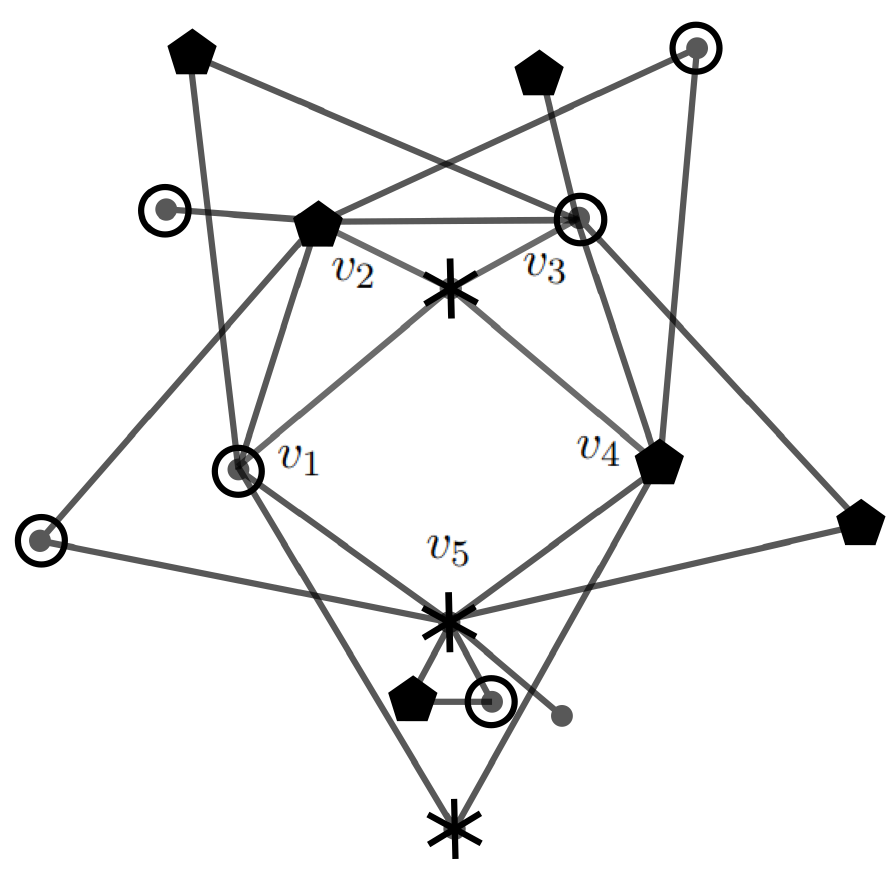}
    \caption{Case 3.}
    
    \end{subfigure}
    
\end{figure}

\textbf{Algorithm.}
    
    Let us colour the graph $G$ as follows.
    \begin{enumerate}
        \item We colour with red vertices $v_1$ and $v_3$, with green vertices $v_2$ and $v_4$, and with blue vertex $v_3$. 
        \item For $i \neq 1$ and $w\in B^*_i$ we colour $c(w)=c(v_{i+1})$.
        \item For $i\neq 4, 5$ and $w\in A^*_i$ we colour $c(w)=c(v_{i-1})$.
        \item We colour $C_1$ with blue and $A_1', A_3', A_4', B_1'$ with remaining colours. 
        \item If $D= \emptyset$, we colour $A_5^*$ with green. Then we colour with red every $w\in A_4^*$  adjacent to $A_3' \cup B_1'$. The rest of $A_4$ we colour with blue. We colour with blue every $w \in B_1^*$ adjacent to $A_5^*$. The rest of $B_1^*$ we colour with green. 
        \item If $D \neq \emptyset$, we colour $D_1$ with blue. Then we colour $B_1^*$ with green and $A_5'$ with red and green. Every $w\in A_5^*$ adjacent to $B_1^*$ we colour with red. The rest of $A_5^*$ we colour with green. 
    \end{enumerate}

\begin{proposition}
    The algorithm colours all vertices of $G$ and this colouring is proper. 
\end{proposition}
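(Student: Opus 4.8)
The plan is to establish two things: that the procedure assigns a colour to every vertex, and that the resulting assignment is proper; I work under the standing hypotheses that $G$ is $2$-connected, $K_4$-free, has $\delta(G)\ge 3$, contains no $W_5$ and no $M_7$, and that $V(G)=Q\cup A\cup B\cup C\cup D$ with $Q=C_5$. The first move is to put $G$ into a \emph{normal form} in which the only nonempty ``special'' sets carry exactly the indices addressed by the algorithm. By Fact~\ref{f:C} at most one $C_i$ is nonempty and by Fact~\ref{f:D} at most one $D_i$ is nonempty, so after a rotation I may assume $C=C_1$ and, when $D\ne\emptyset$, $D=D_1$. Fact~\ref{f:A'B'_max2} confines the nonempty $A'_i$ to at most two indices at cyclic distance $\ge 2$; a rotation, together with the reflection through $v_2$ that preserves $C_1$, moves these into $\{1,3,4\}$ when $D=\emptyset$, while Fact~\ref{f:D} forces the only surviving $A'$ to be $A'_5$ when $D\ne\emptyset$. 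This is where I split into the two cases $D=\emptyset$ and $D\ne\emptyset$ of the algorithm.

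Granting the normal form, \emph{completeness} is immediate by enumeration: $Q$ is coloured in step~1; a vertex of $B$ lies in some $B^*_i$ (step~2 if $i\ne 1$, step~5/6 if $i=1$) or in $B'_1$ (step~4); a vertex of $A$ lies in some $A^*_i$ (step~3 if $i\notin\{4,5\}$, step~5/6 otherwise) or in $A'_i$ with $i\in\{1,3,4\}$ (step~4) or $i=5$ (step~6); and $C=C_1$, $D=D_1$ are coloured in steps~4 and~6. That the bipartite blobs $A'_i$ and $B'_i$ really admit the two-colourings used here rests on Fact~\ref{f:p5_bipartite}, together with the observation that the fixed colouring $v_1\dots v_5\mapsto$ red, green, red, green, blue gives $v_i$ and $v_{i+2}$ the \emph{same} colour exactly at $i\in\{1,2\}$ — which is precisely why $B'$ must be confined to such an index (an edge in $B'_i$ needs two colours outside $\{c(v_i),c(v_{i+2})\}$).

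I would organise \emph{properness} along the eight edge types. For type~$0$ (edges to $Q$) each rule avoids the colours of the relevant $Q$-neighbours: $c(w)=c(v_{i+1})$ for $w\in B^*_i$ and $c(w)=c(v_{i-1})$ for $w\in A^*_i$ differ from $c(v_i),c(v_{i+2})$ and from $c(v_i)$ respectively, and the explicit colours of $C_1,D_1$ and the two-colourings of $A',B'$ are checked directly against their (at most two) cycle neighbours. Types~$1$ and~$4$ (intra-class edges) are covered by the bipartite two-colourings, since by definition $A^*_i,B^*_i$ carry no intra-class edge. For the cross types~$2,3,5,6$ I use that the $A_i$--$A_{i+1}$ and $A_i$--$B_{i-1}$ edges are \emph{obligatory} (Fact~\ref{f:A-AB_edges}), while a great many potential edges are \emph{forbidden} (Facts~\ref{f:A-ABC_noedge}, \ref{f:C-A_noedge}, \ref{f:B-B_noedge}), so that the only adjacencies that can actually occur are exactly the ones the colouring is designed to separate.

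The main obstacle is the \emph{coordinated} colouring in steps~5 and~6, where $A^*_4,A^*_5,B^*_1$ (and $A'_5,D_1$) are split according to adjacency to $A'_3\cup B'_1$, to $A^*_5$, or to $B^*_1$: I must show these ``adjacent-to'' conditions genuinely partition the sets and never force a vertex onto a colour already used on a neighbour, so as to satisfy simultaneously the obligatory $A_4$--$A_5$ edges, the optional $A_5$--$B_1$ edges, and the wrap-around with $A^*_1$ and with blue $D_1$. The tools are Facts~\ref{f:ifone_thentwo} and~\ref{f:ifone_thentwo2}, which say that an edge inside $A'_i$ (resp.\ $B'_i$) forces its endpoints to share all external neighbours, so that each blob behaves as one unit toward the rest of $G$, and Fact~\ref{f:A-AB_edge_or_no}, which stops a $B_{i+2}$-vertex from seeing both an $A_i$- and an $A_{i+1}$-neighbour. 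The single hardest point, which I expect to absorb most of the work, is justifying the normal form in the residual case $C=D=\emptyset$: there I would prove that at most one index carries a nonempty $B'_i$. For incompatible pairs this is clean — two blobs $B'_i,B'_{i+2}$ would require $c(v_i)=c(v_{i+2})$ and $c(v_{i+2})=c(v_{i+4})$, hence $c(v_i)=c(v_{i+4})$ on the adjacent vertices $v_i,v_{i-1}$ of $Q$, which is impossible, and growing this configuration to meet $\delta(G)\ge3$ exhibits an induced $W_5$ or a spindle $M_7$, contrary to hypothesis — whereas the remaining coexisting-blob configurations must be excluded by a finer appeal to the same structural facts and the degree condition.
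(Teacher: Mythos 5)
Your write-up is a plan rather than a proof: at each of the points you yourself identify as the hard ones, you stop at naming the tools instead of carrying out the verification. Concretely, the bulk of the paper's argument in the case $C\neq\emptyset$ consists of showing that every potential type-5 or type-6 conflict forces a concrete $M_7$ or $M_{10}$ (e.g.\ $A_1'$ adjacent to $B_3^*$ gives $M_7$ via Facts~\ref{f:C-AB_edges} and~\ref{f:ifone_thentwo}; $A_4'\neq\emptyset$ together with $A_1'$ and $A_5^*$ nonempty gives $M_{10}$), and only after these exclusions can one legitimately apply the reflection swapping $v_4$ and $v_5$ to reach your ``normal form''. You assert the normal form up front and say the remaining adjacencies ``are exactly the ones the colouring is designed to separate'', which is precisely the claim that needs proof. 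Similarly, the coordination of steps~5 and~6 (the wrap-around among $A_4^*$, $A_5^*$, $B_1^*$, $D_1$) is flagged as the main obstacle and then not resolved.

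The one concrete argument you do offer for the residual case $C=D=\emptyset$ is both incomplete and of the wrong kind. Showing that two blobs $B_i'$, $B_{i+2}'$ force $c(v_i)=c(v_{i+2})=c(v_{i+4})$ on adjacent vertices of $Q$ only proves that no proper $3$-colouring exists; the Proposition sits inside a theorem whose negative branch must exhibit an odd wheel or a spindle graph, so a colouring-parity contradiction does not discharge the case --- you must actually produce the $M_7$ (and you defer the remaining configurations to ``a finer appeal to the same structural facts''). The paper avoids this entirely by a different and cleaner move that you miss: if $B'\neq\emptyset$ while $C=D=\emptyset$, take $w\in B_i'$ with neighbour $w'\in B_i$ and replace $v_{i+1}$ by $w$ to get a new $5$-cycle $Q'$ on which $w'$ has three consecutive neighbours, i.e.\ relative to $Q'$ one is back in the case $C\neq\emptyset$. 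This re-choice of the base cycle (also used to dispose of $A'$-incident type-6 edges in that case) is the key structural idea of Case~2, and without it --- or a completed substitute --- your proof has a genuine gap.
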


\begin{proof}

We want to prove that our algorithm colours all vertices of $G$ and this gives a proper colouring. Of course, it is easy to see that there will not occur any colour conflict on edges of types $0$, $2$ and $3$. By Fact \ref{f:p5_bipartite} there is no colour conflict also on edges of types $1$ and $4$ (we have two free colours, and the respective subgraphs are bipartite). The further proof will be split into three subcases. 

\textit{Case 1}. $C \neq \emptyset$. 

 Of course in this case $D= \emptyset$ (by Fact \ref{f:D}). Note that we can assume without loss of generality  that $C=C_1$, $B' = B_1'$ (by Fact \ref{f:C}) and  $A_5' = \emptyset$ (by Fact \ref{f:A'B'_max2}). 
 %If there is only one $i$ such that $A_i' \neq \emptyset$ we can assume $i =3,4$. 
 Let us also recall that in this case $A_2' = \emptyset$ (by Fact \ref{f:C}). 
 
    We start with checking the possible colour conflicts on the edges of type $5$. Of course, one of the sets $A_3', A_4'$ is empty (by Fact \ref{f:A'B'_max2}), so there is no colour conflict on the edges of type $5$ between sets $A_3$ and $A_4$. Suppose $A_1'$ and $A_5^*$ are both nonempty. 
    %Then due to our assumption from previous paragraph one of sets $A_3'$, $A_4'$ is nonempty. 
    If $A_4' \neq \emptyset$, then the graph $G[\{v_4\} \cup A_4' \cup A_5^* \cup A_1' \cup \{v_1, v_2, v_3\} \cup C_1]$ contains $M_{10}$. If $A_3' \neq \emptyset $ and $A_4^* \neq \emptyset$ then the graph $G[A_5^* \cup A_1' \cup \{v_1, v_2, v_3\} \cup C_1 \cup A_3' \cup A_4]$ contains $M_{10}$. 
    Thus, at least one of the sets $A_3'$, $A_4^*$ is empty.
    Hence, we can consider the symmetry of the graph such that it swaps $v_4$ and $v_5$. Then one of the sets $A_1'$ or $A_5^*$ is empty. 

 Now we want to exclude colour conflicts on the edges of type $6$. If $A_1'$ is adjacent to $B_3^*$, then the graph $G[B_3^* \cup A_1' \cup \{v_1, v_2, v_3\} \cup C]$ contains $M_7$ (by Facts \ref{f:C-AB_edges}, \ref{f:ifone_thentwo}). Analogously for $A_3'$ incident to $B_4^*$. 
 If $A_4'$ is incident to $B_5^*$, then the graph $G[\{v_4\} \cup A_4' \cup B_5^* \cup \{v_2, v_3\} \cup C]$ contains $M_7$ (by Fact \ref{f:C-AB_edges}, \ref{f:ifone_thentwo}). 
 The colour conflict between $A_4'$ and $B_1^*$ may occur only if $B_1$ is blue. By definition of the colouring, $B_1$ is blue only if it is incident to $A_5^*$. By Fact~\ref{f:A-AB_edge_or_no} this is impossible. 

 If there is a colour conflict between $A_4^*$ and $B_5^*$, then there must be two adjacent vertices $w \in A_4^*$ and $w' \in B_5^*$, both coloured with red. But $w$ can be red only when it is adjacent to $B_1'$ (by Fact \ref{f:C-AB_edges} the graph $G[\{v_1, v_2\} \cup C \cup B_1' \cup \{w, w'\}]$ contains $M_7$) or when $A_3'$ is nonempty (and the graph $G[{w, w', v_2, v_3} \cup C \cup A_3']$ contains $M_7$). 

  The colour conflict between $A_4^*$ and $B_1^*$ can occur only if there are two vertices $w \in A_5^*$ and $w' \in B_1$, both coloured with blue. But if $w'$ is coloured with blue, by definition of the colouring, there is $w'' \in A_5^*$ such that $w'w'' \in E(G)$. It contradicts Fact \ref{f:A-AB_edge_or_no}, so this colour conflict is also impossible. 
 
 If $w \in A_5^*$ is adjacent to $w', w'' \in B_1'$, then $u \in A_4$ and $w'$ cannot be adjacent (otherwise the graph $G[\{w, w', w'', u\}]$ contains $K_4$ or $G[\{w, w', w'', v_1\} \cup A_4]$ contains $M_7$, by Fact \ref{f:ifone_thentwo2}). Thus, we can consider the symmetry of the graph such that it swaps $v_4$ and $v_5$. Note that this operation does not change our previous assumption. Indeed, before the reflection, if $A_4'\neq \emptyset$, then the graph $G[\{v_4\} \cup A_4' \cup A_5 \cup B_1' \cup \{v_3\}]$ contains $M_7$, and if $A_3' \neq \emptyset$ and $A_4^* \neq \emptyset$, then the graph $G[A_5^* \cup B_1' \cup \{v_3\} \cup A_3' \cup A_4^*]$ contains $M_7$.  

  By definition of the colouring, there is no colour conflict between $A_5^*$ and $B_1^*$.

\textit{Case 2}. $C \cup D = \emptyset$

In case $C \cup D = \emptyset$ we can also assume $B' = \emptyset$ (because otherwise we can find another $5$-cycle $Q'$ and a vertex incident to three consecutive vertices of $Q'$). By Fact \ref{f:A'B'_max2} we can assume that $A_2' \cup A_5' = \emptyset$. Also, we can assume that there is no $6$-type edges with vertices from $A'$ (again, by Fact \ref{f:ifone_thentwo}, otherwise we could find there another $5$-cycle so that we return to \textit{Case 1}) and that if $A'$ is nonempty, then $A_5$ is empty (because for $A_i' \neq \emptyset$ we have one of sets $A_{i+1}, A_{i+2}, A_{i+3}, A_{i+4}$ empty by the same argument as before). 

Now, let us check the possible colour conflicts. It is easy to see that if $A' = \emptyset$, our colouring is proper. Assume $A' \neq \emptyset$. Then, thanks to $A_5 = \emptyset$, there is no conflicts on edges of type $5$. The only possible (given our assumptions) conflicts may occur on edges of type $6$ with vertices from $A_4^*$. As in previous case, there is no conflict between $A_4^*$ and $B_1$. If $A_4^*$ is connected to $B_5^*$ and coloured with red, then by definition of our colouring $A_3' \neq \emptyset$. Hence, we can find another $5$-cycle so that we return to \textit{Case 1}. 

\textit{Case 3}. $D \neq \emptyset$

We will see that there cannot occur any colour conflict on edges of type $7$.
Let us recall that by Fact \ref{f:D} in this case we have $A_i \cup A_{i+3} \cup A'_{i+1} \cup A'_{i+2} \cup B' \cup C= \emptyset$. If $B_4$ is adjacent to $D=D_1$, then the graph $G$ contains $W_5$. 

Hence colour conflicts can occur only on the edges of type $6$ with vertices from $A_5$. If $A_5'$ is adjacent to $B_1^*$, then the graph $G[\{v_5\}\cup A_5' \cup B_1^* \cup \{v_3, v_4\} \cup D]$ contains $M_7$ by Fact~\ref{f:D-AB_edges}. Analogously for $A_5'$ adjacent to $B_2^*$. Now suppose there are $w \in A_5^*$,  $w' \in B_1^*$ and $w'' \in B_2^*$ such that $ww', ww'' \in E(G)$. Note that $w'w'' \in E(G)$ (because otherwise the set $\{v_5, v_1, w', u, w''\}$ induces $bull$, for any $u\in D$). Then the set of vertices $\{v_1, w', w, w'', v_4\}$ induces $bull$.

\end{proof}

\subsection{Proof of Theorem~\ref{thm:bullH}}
We can follow the proof of Theorem~\ref{thm:bullE} with a few changes. 
Because there are no 5-cycles, we only have to consider the case $p > 5$. 
Notice that in this case, if $w\in A_i$, then the set of vertices $\{v_{i-3}, v_{i-2}, v_i, v_{i+1}, v_{i+2}, v_{i+3}, w\}$ induces~$S_{1,2,3}$. 
Then (because $D=\emptyset$ due to Fact \ref{values_q}), we have $V(G)=Q \cup B \cup C$. 
Now, we can follow the proof of Theorem~\ref{thm:bullE}.

%%%%%%%%%%%%%%%%%%%%%%%%%%%%%%%%%
%%%%%%%%%%%%%%%%%%%%%%%%%%%%%%%%%
%%%%%%%%%%%%%%%%%%%%%%%%%%%%%%%%%
%%%%%%%%%%%%%%%%%%%%%%%%%%%%%%%%%
%%%%%%%%%%%%%%%%%%%%%%%%%%%%%%%%%
\section{Certifying algorithms}\label{sec:algorithm}

\begin{theorem}
    There exists a polynomial time certifying algorithm for 3-colourability in the class of $(bull, H)$-free graphs for $H\in \{S_{1,1,2}, S_{1,2,2}\}$, and in the class of $(bull, C_5, H)$-free graphs with $H\in \{S_{1,1,3}, S_{1,2,3}\}$. 
\end{theorem}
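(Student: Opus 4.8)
The plan is to observe that the proofs of Theorems~\ref{thm:bullchair}, \ref{thm:bullE} and \ref{thm:bullH} are entirely constructive: along every branch they either exhibit one of the forbidden non-$3$-colourable subgraphs ($K_4=M_4$, an odd wheel $W_{2p+1}$, or a spindle $M_{3p+1}$, in particular $M_7$) or they build an explicit proper $3$-colouring. Each outcome is an easily verifiable certificate: a $3$-colouring is checked edge by edge, while $K_4$, $W_{2p+1}$ and $M_{3p+1}$ are not $3$-colourable (by the Proposition, and since $\chi(W_{2p+1})=4$), so exhibiting such a subgraph certifies $\chi(G)\ge 4$. It therefore suffices to transcribe each proof into an algorithm and to check that every step is polynomial. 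In this transcription, whenever a proof sentence reads ``then the set $\dots$ induces $bull$'', the corresponding adjacency test contradicts $bull$-freeness, so that branch never fires; whenever it reads ``induces $K_4$ / an odd wheel / $M_7$'' or ``contains $M_{3p+1}$'', the algorithm halts and returns that subgraph.

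First I would run the reduction pipeline of Section~\ref{sec:preliminary}, each step of which is polynomial and certifying: test $\Delta(G)\le 3$ and, if so, return a Brooks $3$-colouring; iteratively delete vertices of degree at most $2$, recording them to be recoloured greedily at the end; split off components and recurse on the blocks at each cut-vertex; and search for a $K_4$ or an odd antihole, returning the $K_4$, or the $M_7$, that the odd-antihole reduction of Section~\ref{sec:preliminary} guarantees. After this, $G$ is $2$-connected, $K_4$-free and satisfies $\delta(G)\ge 3$. Next I would either certify perfection or produce an odd hole: testing whether $G$ is Berge is polynomial, and if $G$ is perfect then, being $K_4$-free, $\omega(G)\le 3$, so by Theorem~\ref{tSPGT} together with a polynomial optimal colouring of perfect graphs we return a $3$-colouring; otherwise the recognition yields an odd hole, the odd-antihole case having already been removed.

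The remaining work is to obtain a \emph{smallest} induced odd hole $Q$ and run the structural analysis on it. Once $Q$ is fixed, scanning $N(w)\cap Q$ for every $w\notin Q$ computes the partition into the sets $A_i,B_i,C_i,D_i$ with their starred and primed refinements; by Lemma~\ref{lem:Qdominat} and Facts~\ref{values_q}--\ref{q1} this partition covers $V(G)$ and is found in $O(|V(G)|\,|Q|)$ time. The algorithm then branches exactly as in the proofs. For the chair and $E$ cases with $p>5$ it follows the colouring procedure built around property~(\ref{prop}): colour $Q$ respecting the equalities $c''(v_i)=c''(v_{i+2})$ forced whenever $C_i\cup B_i'\neq\emptyset$, extend bipartitely on each $G[B_i\cup C_i]$ by Fact~\ref{bip}, and set $c(w)=c(v_{i+1})$ for each $w\in B_i^*$; a colour conflict in this procedure is precisely the event that $G$ contains a spindle, which is then returned. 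For $p=5$ it executes the explicit Algorithm and its three-subcase conflict analysis, each of whose finitely many adjacency configurations is tested directly and either closed off or resolved into $W_5$, $M_7$ or an $M_{3p+1}$ (such as the $M_{10}$ appearing there).

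The step I expect to be the main obstacle is finding a smallest induced odd hole efficiently: once triangles are present a shortest \emph{odd cycle} need not be chordless, so this does not reduce to a plain shortest-odd-cycle computation. I would resolve it either by invoking the known polynomial-time shortest-odd-hole algorithm, or, to remain self-contained, by iterative refinement: start from any odd hole supplied by Berge recognition, run the structural analysis, and whenever a fact whose proof invokes minimality of $Q$ (such as Fact~\ref{q1} or Fact~\ref{f:B-B_noedge}) is found to be violated, read off from that same proof the strictly shorter odd cycle it constructs, replace $Q$ by an induced odd sub-hole of it, and restart; since $|Q|$ strictly decreases this terminates after at most $|V(G)|$ restarts. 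The only other delicate point is the bookkeeping in the $p=5$ case, where one must verify branch by branch that every subcase of the conflict analysis ends in a returned certificate and that the symmetry swaps used there (exchanging $v_4$ and $v_5$) are carried out algorithmically; this is routine but must be checked exhaustively to conclude that the overall algorithm is correct, polynomial, and certifying.
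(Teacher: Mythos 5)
Your proposal is correct and follows essentially the same route as the paper: transcribe the constructive case analysis into an algorithm, detect an odd hole in polynomial time via the Chudnovsky--Scott--Seymour--Spirkl algorithm, test for odd wheels by checking bipartiteness of neighbourhoods, and observe that every branch of the structural proofs terminates in either an explicit $3$-colouring or an exhibited $W_{2p+1}$ or $M_{3i+1}$. In fact you treat the one delicate point --- that the structural analysis needs a \emph{smallest} odd hole, which plain odd-hole detection does not directly supply --- more carefully than the paper does, via your iterative-refinement fallback.
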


\begin{proof} An odd hole can be found in polynomial time $O(n^9)$ by an algorithm in~\cite{ChSSS20}. So let $Q$ be this odd hole of length $p$. To check whether $G$ contains an odd wheel $W_{2t+1}$ for some $t\geq1$, one can check for every vertex $w \in V(G)$ in polynomial time $O(|E|)$ whether $G[N(w)]$ is bipartite. If this is not the case, then $G$ contains an odd wheel with center vertex $w$. In the case $p > 5$, all structural investigations can be performed in polynomial time and the algorithm either finds a proper 3-colouring or detects a spindle graph $M_{3t+1}$ for some $t\geq 3$. In the proofs for the case $p=5$, proper 3-colourings of $G$ or a subgraph from $\{M_4, M_7, M_{10}\}$ will be found in polynomial time.

Summarizing, all structural investigations and algorithms run in polynomial time and either find a proper 3-colouring of $G$ or detect an odd wheel or a spindle graph $M_{3i+1}$ for some $i \geq 1$.
\end{proof} 

%%%%%%%%%%%%%%%%%%%%%%%%%%%%%%%%%%%%%%%%%%%%%%%%%%%
%%%%%%%%%%%%%%%%%%%%%%%%%%%%%%%%%%%%%%%%%%%%%%%%%%%
%%%%%%%%%%%%%%%%%%%%%%%%%%%%%%%%%%%%%%%%%%%%%%%%%%%
%%%%%%%%%%%%%%%%%%%%%%%%%%%%%%%%%%%%%%%%%%%%%%%%%%%
%%%%%%%%%%%%%%%%%%%%%%%%%%%%%%%%%%%%%%%%%%%%%%%%%%%
%%%%%%%%%%%%%%%%%%%%%%%%%%%%%%%%%%%%%%%%%%%%%%%%%%%

\end{document}